\newtheorem{thm}{Theorem}[section]
\newtheorem{prop}[thm]{Proposition}
\newtheorem{lem}[thm]{Lemma}
\newtheorem{defn}[thm]{Definition}
\newtheorem{rem}[thm]{Remark}
\numberwithin{equation}{section}
\def\bL{{\mathbb L}}
\def\bP{{\mathbb P}}
\def\bR{{\mathbb R}}
\def\N{{\mathbb N}}
\def\cA{{\mathcal A}}
\def\cC{{\mathcal C}}
\def\cD{{\mathcal D}}
\def\cM{{\mathcal M}}
\def\cN{{\mathcal N}}
\def\cT{{\mathcal T}}
\def\cW{{\mathcal W}}
\def\Hom{{\rm Hom}}
\def\id{{\rm id}}
\def\cof{{\textbf{cof}}}
\def\llp{{\textbf{llp}}}
\def\rlp{{\textbf{rlp}}}
\def\cell{{\textbf{cell}}}
\def\fib{{\textbf{fib}}}
\def\Mor{{\text{Mor}}}
\def\Cat{{\textbf{Cat}}}
\def\PrSh{{\textbf{PrSh}}}
\def\Flow{{\textbf{Flow}}}
\def\sSemiCat{{\textbf{sSemiCat}}}
\title[Comparison Between Different Topological Models of Concurrency]{Comparison Between Different Topological Models of Concurrency} 
\author{Joshua F.~Lieber}
\address{California Institute of Technology, Pasadena \\ USA}
\email{jlieber@caltech.edu}
\begin{document}
\maketitle

\begin{abstract}
    In this note, we provide an explicit non-Quillen equivalence between the category of precubical sets and Gaucher's category of flows via a class of "realization functors" (with mild assumptions on the cofibrations of the category of precubical sets).  In addition, we demonstrate a Quillen equivalence between simplicial semicategories and flows before proving that simplicial semicategories satisfy many of the same properties as flows.  Finally, we introduce the category of boxed symmetric trees, presheaves on which may provide a slightly more flexible setting for concurrent computing than (pre)cubical sets, before showing that when endowed with degeneracies, the aforementioned presheaf category is a test category (although not strict test).  
\end{abstract}

\section{Introduction}

Over the years, numerous models for concurrent computing have been proposed, each with unique advantages and disadvantages.  One hope is that at least some of these models might be equivalent in a suitably weak sense, so that in choosing to work with one model over another, one is not really making a choice at all (with respect to all relevant data).  Unfortunately, this does not seem to be the case in general.  

The two existing models that we consider in this article are precubical sets (so-called higher dimensional automata, c.f. \cite{FGHMR}) and flows (first investigated by Gaucher in \cite{Gau1}).  Precubical sets are a relatively classical model of concurrent computing.  In a given cube, each of the different directions correspond to different concurrently executing computations.  One can think of each direction in a standard n-cube as coding for a factor of "done-ness" which meets with all other possible executions at the other end of the longest diagonal.  More complicated computations are merely built out of these basic units.  On the other hand, the category of flows basically consists of topologically enriched semicategories (categories without identity) and semifunctors between them (continuous on Hom spaces).  Each object of a given flow can be seen as a different state that a computation can be in, and each morphism of that flow can be seen as an execution path between different states.  The reason for topologizing the spaces of morphisms is to allow us to have a notion of (computational) equivalence between different execution paths, and to allow us to differentiate between different ways of moving between paths between execution paths, and so on.

The structure of our article is as follows.  In section 2, we provide a rapid refresher on many of the central notions of the theory of model categories.  This includes the basics on model structures and homotopy categories, as well as Quillen adjunction and Quillen equivalence.  We also introduce Cofibrantly Generated and Combinatorial model categories, which provide most of the basic setting in which we work throughout the article (every model structure we define will end up being combinatorial).  The one more advanced topic we consider will be that of left-induction of model structure, whereby one pulls back a model structure along a left adjoint.  This seemingly obvious notion is actually rather non-trivial, as several technical conditions must be satisfied for left-induction to succeed (thankfully, all but one of these may be elided by virtue of the fact that our model structures are combinatorial).  

In section 3, we begin by introducing the model category of flows.  After discussing a few basic properties of this category (largely citing Gaucher's work itself for the proofs), we provide the definition (also due to Gaucher) of a class of functors from the category of precubical sets to flows called realization functors.  These functors can basically be thought of as semicategorical (and topological) analogues of the realization functor from simplicial sets to simplicially enriched categories.  Analogously, these realizations also admit right adjoint nerve functors.  This is all a lead-up to our first theorem of the paper, namely:

\begin{thm}
Suppose that there is a model structure on the category of precubical sets along with a realization functor $L:\textbf{PrSh}(\square)\rightleftarrows\textbf{Flow}$ which is the left Quillen adjoint in a Quillen pair $(L\dashv N_{L}):\textbf{PrSh}(\square)\rightleftarrows\textbf{Flow}$ (note that we are essentially only assuming that everything in $I=\{\partial\square[n]\hookrightarrow\square[n]\}_{n=0}^\infty\cup\{\square[0]\sqcup\square[0]\to\square[0]\}$ is sent to a cofibration and that the images of $\square[n]$ are weakly equivalent to $\{0<1\}^n$ for any $n$).  Then this adjunction cannot be a Quillen equivalence.
\end{thm}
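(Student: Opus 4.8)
The plan is to obstruct essential surjectivity of the total left derived functor. Were $(L\dashv N_L)$ a Quillen equivalence, $\mathbb{L}L\colon\mathrm{Ho}(\textbf{PrSh}(\square))\to\mathrm{Ho}(\textbf{Flow})$ would be an equivalence of categories, hence essentially surjective, and I would contradict this by exhibiting a flow that is isomorphic in $\mathrm{Ho}(\textbf{Flow})$ to $\mathbb{L}L(X)$ for no precubical set $X$. The witness is the flow $G$ with object set $\{0,1\}$, with $\mathbb{P}_{01}G=S^1$ and all other path spaces empty (so there is nothing to specify for the composition). The features of $L$ I would use --- all consequences of the hypotheses and of the set-up of Section~3 --- are: $L$ preserves colimits (it is a left adjoint); $L(\square[0])\simeq\{0<1\}^{0}$ is the flow with one object and empty path space, while $L(\square[1])\simeq\{0<1\}$ has a nonempty path space between its two objects; the functor $(-)^{0}\colon\textbf{Flow}\to\textbf{Set}$ preserves colimits (it is left adjoint to the functor carrying a set to the flow on it with every path space a point), so that from $L(\square[n])^{0}=\{0,1\}^{n}=(\square[n])_{0}$ and density one obtains $L(Y)^{0}\cong Y_{0}$ naturally for all precubical sets $Y$; and $L$ sends each map of $I$ to a cofibration between cofibrant flows --- cofibrancy of $L(Y)$ holding for arbitrary $Y$ because $\varnothing\to Y$ is a relative $I$-cell complex (its skeletal filtration) and $L$ preserves colimits, so $\varnothing=L(\varnothing)\to L(Y)$ is a relative $L(I)$-cell complex with $L(I)\subseteq\cof$.

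The key step is that $G$ is not connected by a zig-zag of weak equivalences of flows to $L(Y)$ for any precubical set $Y$. Suppose it were. Weak equivalences of flows are bijective on objects, so $Y_{0}$ has exactly two elements $p,q$, matched (say) with $0,1\in G^{0}$; and they are weak homotopy equivalences on path spaces, so $\mathbb{P}_{pp}L(Y)$, $\mathbb{P}_{qq}L(Y)$ and $\mathbb{P}_{qp}L(Y)$ are each weakly equivalent to the empty space, hence empty, while $\mathbb{P}_{pq}L(Y)\simeq S^1$. Every edge $e\in Y_{1}$ from a vertex $a$ to a vertex $b$ induces $L(e)\colon L(\square[1])\to L(Y)$ carrying the objects $0,1$ to $a,b$, so it sends a point of the nonempty space $\mathbb{P}_{01}L(\square[1])$ into $\mathbb{P}_{ab}L(Y)$; thus $\mathbb{P}_{ab}L(Y)\ne\varnothing$, which forces every edge of $Y$ to run from $p$ to $q$. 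But then $Y$ can have no square: a square $s\in Y_{2}$ has among its faces an edge from $s(01)$ to $s(11)$, and the previous sentence forces $s(01)=q$, so that edge would emanate from $q$, of which there are none. Hence $Y_{n}=\varnothing$ for all $n\ge 2$ as well, since an $n$-cube restricts along any $2$-face to an element of $Y_{2}$; so $Y$ is a graph with vertex set $\{p,q\}$ and a set $E$ of edges, all directed from $p$ to $q$.

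It remains to see that such a $Y$ has $\mathbb{P}_{pq}L(Y)$ homotopy discrete, contradicting $\mathbb{P}_{pq}L(Y)\simeq S^1$. Applying the colimit-preserving $L$ to the skeletal pushout presenting $Y$ as $(\coprod_{Y_{0}}\square[0])\sqcup_{\coprod_{E}\partial\square[1]}\coprod_{E}\square[1]$ exhibits $L(Y)$ as a pushout of cofibrant flows along a coproduct of copies of the cofibration $L(\partial\square[1]\hookrightarrow\square[1])$; this is therefore a homotopy pushout, and, $\mathbb{P}_{pp}L(Y)$ and $\mathbb{P}_{qq}L(Y)$ being empty so that no composites arise, it gives $\mathbb{P}_{pq}L(Y)\simeq\coprod_{E}\mathbb{P}_{01}L(\square[1])\simeq\coprod_{E}\{\ast\}$, a discrete space. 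A discrete space is never weakly equivalent to $S^1$, the desired contradiction. Since $\mathbb{L}L(X)\cong L(QX)$ in $\mathrm{Ho}(\textbf{Flow})$ for any cofibrant replacement $QX\to X$ (and $QX$ is again a precubical set), $G$ also lies outside the essential image of $\mathbb{L}L$, so $(L\dashv N_L)$ cannot be a Quillen equivalence. The step I expect to require the most care is extracting from the definition of realization functor in Section~3 the two facts leaned on most heavily --- that $L(Y)^{0}\cong Y_{0}$ naturally in $Y$, and that $L(\square[1])$ carries an honest morphism between its two objects --- after which the argument is routine colimit bookkeeping together with the observation that the realization of a graph has discrete path spaces.
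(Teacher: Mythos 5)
Your proof is correct, and it is closely related to but distinct from the paper's. The paper appeals to the derived-counit criterion: since every flow is fibrant and (under the stated hypotheses) every precubical set is cofibrant, a Quillen equivalence would force the counit $L(N_L(X))\to X$ to be a weak equivalence for all flows $X$; the paper then explicitly computes $N_L(\mathrm{Glob}(Y))$ for a space $Y$ (two vertices, a \emph{discrete} set of $1$-cubes corresponding to $\Hom_{\mathbf{Top}}(Z,Y)$ with $Z\simeq *$, nothing higher) and realizes it to get a globe on a discrete space, which cannot hit $\mathrm{Glob}(S^2)$. You instead obstruct essential surjectivity of $\mathbb{L}L$: you pick the target $\mathrm{Glob}(S^1)$ and show that any precubical set $Y$ with $L(Y)$ weakly equivalent to it is forced to have exactly two vertices, all edges in one direction, and no cubes of dimension $\geq 2$, whence $\mathbb{P}L(Y)$ is a coproduct of contractibles and therefore homotopy discrete. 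Both arguments hinge on the same underlying phenomenon --- the realization of any two-vertex precubical set has homotopy-discrete path space because only $\square[0]$ and $\square[1]$ can map to such a flow --- but the paper packages this via the nerve-and-counit computation, while you package it via a nonexistence argument on the precubical side. The paper's route is shorter once one accepts the counit criterion and the nerve computation; yours needs the extra combinatorial step (``no squares, hence one-dimensional'') but avoids computing $N_L$ at all and makes the obstruction more explicitly geometric. One small remark: your invocation of homotopy pushouts in the final step is not really needed --- since all edges run $p\to q$, the strict pushout already gives $\mathbb{P}_{pq}L(Y)=\coprod_E \mathbb{P}_{01}L(\square[1])$ on the nose (no composites are generated), and each summand is weakly contractible by hypothesis, so the conclusion that it is homotopy discrete is immediate.
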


In spite of this, there is a reasonable combinatorial model for flows, provided by simplicial semicategories (with simplicial semifunctors as morphisms).  We begin the section by introducing a few key definitions, and then set up a geometric realization/nerve adjunction pair (essentially just applying the geometric realization/singular space adjunction between simplicial sets and topological spaces on the Homs of simplicial semicategories or flows).  This allows the model structure to be left induced.

\begin{prop}
The model structure on $\Flow$ may be left induced via the adjunction $(|-|\dashv Sing):\sSemiCat\rightleftarrows\Flow$ (constant on objects and acting via realization/singular set on Hom objects).  This upgrades $(|-|\dashv Sing)$ into a Quillen pair.
\end{prop}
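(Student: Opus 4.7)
The plan is to invoke the Hess--Kedziorek--Riehl--Shipley (HKRS) theorem on existence of left-induced model structures: given a combinatorial model category $\cN$ with accessible weak equivalences and an adjunction $F\dashv G:\cM\rightleftarrows\cN$ with $\cM$ locally presentable, the putative left-induced model structure on $\cM$ (whose cofibrations and weak equivalences are exactly those $f$ for which $Ff$ lies in the corresponding class of $\cN$) is a genuine model structure provided the \emph{acyclicity condition} holds: every map in $\cM$ with the right lifting property against the putative cofibrations lies in the putative weak equivalences. In our situation $\cN=\Flow$ is combinatorial with accessible weak equivalences (by Gaucher's construction, inherited hom-wise from the Serre structure on $\mathbf{Top}$), and $F=|-|:\sSemiCat\to\Flow$ is the desired left adjoint.

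First I would verify that $\sSemiCat$ is locally presentable. Over a fixed object set $S$, the category of simplicial semicategories on $S$ with identity-on-objects semifunctors is the category of non-unital monoids in the locally presentable monoidal category $(\mathbf{sSet}^{S\times S},\circ)$; assembling these via a Grothendieck construction over $\mathbf{Set}$ exhibits $\sSemiCat$ as the category of models of a small limit sketch, hence as locally presentable.

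The main obstacle is verifying the acyclicity condition, which I would tackle by a path-object argument. For $\cC\in\sSemiCat$, define $\mathrm{Path}(\cC)$ to be the simplicial semicategory with the same object set as $\cC$ and Hom simplicial sets $\cC(x,y)^{\Delta[1]}$, with composition induced from that of $\cC$; the hom-wise diagonal and endpoint-evaluation maps assemble into semifunctors $\cC\to\mathrm{Path}(\cC)\to\cC\times\cC$ factoring the diagonal. Because $|-|$ is constant on objects, commutes with finite products, and (by Quillen's theorem on realization of Kan fibrations) sends the hom-wise weak equivalence $K\to K^{\Delta[1]}$ and Kan fibration $K^{\Delta[1]}\to K\times K$ to a weak equivalence and a Serre fibration in $\mathbf{Top}$, the realization $|\mathrm{Path}(\cC)|$ is a genuine path object for $|\cC|$ in $\Flow$. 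Given $f:\cC\to\cD$ with the right lifting property against the putative cofibrations $|-|^{-1}(I_\Flow)$, the standard Quillen path-object argument applies: one extracts a section $s$ of $f$ by lifting $\mathrm{id}_\cD$ along the putative cofibration $\emptyset\to\cD$ (using that every object of $\Flow$ is cofibrant), then produces a homotopy $\mathrm{id}_\cC\simeq sf$ by lifting $(\mathrm{id}_\cC, sf)$ through $\mathrm{Path}(\cC)\to\cC\times\cC$ against $f$. Applying $|-|$ then exhibits $|f|$ as a (topological) homotopy equivalence in $\Flow$, and in particular as a weak equivalence, as required.

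Once the left-induced model structure on $\sSemiCat$ is established, the assertion that $(|-|\dashv\mathrm{Sing})$ is a Quillen pair is automatic: the functor $|-|$ preserves cofibrations and weak equivalences — and thus also trivial cofibrations — by the very definition of the left-induced structure, so it is left Quillen.
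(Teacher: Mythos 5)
Your overall strategy (invoke the HKRS acyclicity criterion for left-induced structures and verify it for $|-|$) is the same as the paper's, but the execution of the acyclicity step contains a genuine error. You extract the section $s$ of $f$ by lifting against $\emptyset\to\cD$, justified by the claim that ``every object of $\Flow$ is cofibrant.'' This is false: Gaucher's theorem says every flow is \emph{fibrant}, not cofibrant (cofibrant flows are retracts of globular cell complexes, and e.g.\ the poset flow $\{0<1\}^2$, which is the realization of an evident simplicial semicategory, is not cofibrant). Consequently $\emptyset\to\cD$ need not lie in $|-|^{-1}(\cof_\Flow)$, the RLP hypothesis on $f$ does not apply to it, and the section-extraction step collapses. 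There is also a duality mismatch in the second step: a map with the \emph{right} lifting property against cofibrations is handled by the cylinder-object form of the retract argument (lift against the cofibration $\cC\sqcup\cC\to\mathrm{Cyl}(\cC)$, with $f$ on the right), whereas a path object $\mathrm{Path}(\cC)\to\cC\times\cC$ pairs with maps having the left lifting property against fibrations; as written, the square you propose to lift in does not have $f$ as its right-hand vertical, so no lift is supplied by the hypothesis. Repairing this would require showing that some cylinder $\cC\sqcup\cC\to\mathrm{Cyl}(\cC)$ realizes to a cofibration of flows and that all realizations are cofibrant, neither of which is available.

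The paper's verification is more elementary and avoids cofibrancy questions entirely: since $|\mathrm{Glob}^\Delta(\partial\Delta^n\hookrightarrow\Delta^n)|\cong\mathrm{Glob}(S^{n-1}\hookrightarrow D^n)$, one has $I_{Simp}\subset|-|^{-1}(\cof_\Flow)$, so any $f\in\rlp(|-|^{-1}(\cof_\Flow))$ in particular lies in $\rlp(I_{Simp})$. The lifting property against $\{\emptyset\hookrightarrow *,\ *\sqcup *\to *\}$ forces $f$ to be bijective on objects, and the lifting property against the globular boundary inclusions says exactly that $f$ is hom-wise a trivial Kan fibration; applying $|-|$ gives a map of flows that is bijective on objects and hom-wise a trivial Serre fibration, i.e.\ a trivial fibration of flows, hence a weak equivalence. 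You should replace your path-object argument with this direct characterization (or, if you insist on a homotopical argument, set it up with cylinders and restrict to the generating cofibrations as above).
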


This then brings us to our second main theorem of the section.

\begin{thm}
The Quillen adjunction $(|-|\dashv Sing):\sSemiCat\rightleftarrows\Flow$ is a Quillen equivalence.
\end{thm}

We then show that there is an equivalent model structure on $\sSemiCat$ that has a nice set of generating cofibrations.  Afterwards, we go on to demonstrate that $\sSemiCat$ has several properties in common with the category of flows (indeed, many of the proofs become even simpler in $\sSemiCat$), including a way of defining a stronger notion homotopy equivalence.

In Section 3, we change gears, and define a small category called the category of boxed-symmetric trees, denoted $\boxed{\cT}$.  One can think of this category as an analogue of the category of cubes, where we allow any rooted tree (either all directed away from the root or all directed towards) as a "basic interval."  Presheaves on this allow us a bit more flexibility as models of concurrency, as our basic "cubes" in this case correspond to performing flowchart computations in any of several concurrent directions.  As a quick check, we prove that $\boxed{\cT}$ is a test category in the sense of Grothendieck.

\begin{thm}
$\boxed{\cT}$ is a test category.
\end{thm}

\bigskip

\textbf{Convention:} In this note, $\textbf{Top} $ will refer to a convenient category of topological spaces (such as compactly generated weak Hausdorff spaces or $\Delta$-generated spaces), i.e., a full replete subcategory of the category of all topological spaces which is cartesian-closed, bicomplete, and which contains all CW complexes.  Topological space will be used to mean a member of this convenient category.

\section{Model Categories}

The following section is devoted to introducing enough of the basic definitions of the theory of model categories (a certain type of category equipped with additional structure that provides a "good setting for homotopy theory" originally defined by Dan Quillen in \cite{Qui}) to understand the following sections.  In particular, this should not be thought of as a comprehensive introduction to the theory of model categories, and there will be several glaring omissions even in the very basics.  For a good introduction to the extremely rich theory of model categories, we refer the interested reader to \cite{Hir}.

Classical sources of motivation for model categories come from considering topological spaces/simplicial sets up to weak homotopy equivalence, and the Gabriel-Zisman localization of a category.  Given a category $\cC$ and any class of morphisms $\cW\subset\Mor(\cC)$, we may form its \emph{Gabriel-Zisman localization} $\cC[\cW^{-1}]$ by formally inverting the morphisms in $\cW$.  In general, this is extremely poorly behaved.  For example, if one begins with a locally small category, its localization at a class of morphisms need not be locally small in general (and, in fact, often isn't).  As will be noted later, model categories provide one setting in which the localization can be controlled (in the sense that it will be equivalent to a category with a much simpler description\textemdash one which, thankfully, is always locally small when one starts with a locally small category).

\subsection{Model Structures, Model Categories, and Homotopy Categories}

\begin{defn}
Suppose we have a category $\cC$.  A \emph{model structure} on $\cC$ consists of three classes of maps, \emph{weak equivalences} $\cW_\cC$, \emph{cofibrations} $\cof_\cC$, and \emph{fibrations} $\fib_\cC$ (we will suppress the subscripts if the context is clear) satisfying the following axioms.
\begin{itemize}
    \item (2-out-of-3 axiom) Given morphisms $f,g\in\textbf{Mor}(\cC)$ such that $g\circ f$ is defined, if any two of $f$, $g$, and $g\circ f$ are in $\cW_\cC$, then so is the third.
    \item (retract axiom) If $f$ is a retract of $g$, and $g$ is a weak equivalence, fibration, or cofibration, then so is $f$.
    \item (lifting axiom) suppose we have the commutative diagram of solid arrows
    \[
    \begin{tikzcd}
  A\arrow[d, "i"]\arrow[r]& X\arrow[d, "p"]\\
  B\arrow[r]\arrow[ru, dashed]& Y
\end{tikzcd}.
    \]
    Then the dotted arrow exists and results in a commutative diagram if
    \begin{enumerate}
        \item $i\in\cof_\cC$ and $p\in\fib_\cC\cap\cW_\cC$
        \item $i\in\cof_\cC\cap\cW_\cC$ and $p\in\fib_\cC$
    \end{enumerate}
    \item (factorization axiom)  There are two functorial factorizations of every morphism $f\in\textbf{Mor}(\cC)$.
    \begin{enumerate}
        \item $f=qi$, where $q\in\fib_\cC\cap\cW_\cC$ and $i\in\cof_\cC$
        \item $f=pj$, where $p\in\fib_\cC$ and $j\in\cof_\cC\cap\cW_\cC$.
    \end{enumerate}
\end{itemize}
Elements of $\fib_\cC\cap\cW_\cC$ are known as \emph{trivial fibrations} and elements of $\cof_\cC\cap\cW_\cC$ are known as \emph{trivial cofibrations}
\end{defn}

\begin{defn}
A bicomplete category $\cC$ equipped with a model structure is known as a \emph{model category}.
\end{defn}

Now, we wish to demonstrate that homotopy categories have well-behaved localizations with respect to their classes of weak equivalences.  This will involve several definitions.

\begin{defn}
Given a model category $\cM$, and object $X\in\cM$ will be known as \emph{fibrant} if the unique map $X\to*$ is a fibration.  Analagously, $X$ will be known as \emph{cofibrant} if the unique map $\emptyset\to X$ is a cofibration.
\end{defn}

Note that we have canonical functors $\cM\to\cM^{\Delta^1}$ given by $X\mapsto(X\to*)$ and $X\mapsto(\emptyset\to X)$.  By our assumptions above, we have a functorial factorization of $X\to*$ into $X\to X^{fib}\to*$, where $X\to X^{fib}$ is a trivial cofibration and $X^{fib}\to*$ is a fibration. Analogously, we have a functorial factorization of $\emptyset\to X$ into $\emptyset\to X^{cof}\to X$, where $\emptyset\to X^{cof}$ is a cofibration and $X^{cof}\to X$ is a trivial fibration.

\begin{defn}
The endofunctors $(-)^{fib}$ and $(-)^{cof}$ of $\cM$ we implicitly defined in the preceding paragraph are known as \emph{fibrant replacement} and \emph{cofibrant replacement}, respectively.
\end{defn}

Recalling that our model category admits small coproducts, for any object $X\in\cM$, we may define the fold map $X\sqcup X\to X$.  By functorial factorization once again, this allows us to define a \emph{good cylinder object} for $X$ by factoring $X\sqcup X\to X$ as 
\[
X\sqcup X\overset{i_0\sqcup i_1}{\longrightarrow}\text{Cyl}(X)\overset{p}{\to}X,
\]
where $i_0\sqcup i_1$ is a cofibration and $p$ is a trivial fibration.

\begin{defn}
Two morphisms $f, g:X\to Y$ are known as \emph{left homotopy equivalent} if there exists a map $H:\text{Cyl}(X)\to Y$ such that the compositions $H\circ i_0=f$ and $H\circ i_1=g$.
\end{defn}

As one might suspect from the name, left homotopy equivalence generates an equivalence relation on the Hom-sets of $\cM$.  In fact, we have the following definition and theorem.

\begin{defn}
Given a model category $\cM$, one defines its \emph{homotopy category} $\text{Ho}(\cM)$ as follows:
\begin{itemize}
    \item The objects of $\text{Ho}(\cM)$ are the objects of $\cM$ which are both fibrant and cofibrant
    \item Given any two $X, Y\in\text{Ho}(\cM)$, one has that $\Hom_{\text{Ho}(\cM)}(X, Y)$ is the quotient of $\Hom_\cM(X, Y)$ by left homotopy equivalence.
\end{itemize}
\end{defn}

\begin{thm}
For any model category $\cM$, its homotopy category $\text{Ho}(\cM)$ is equivalent to its localization $\cM[\cW_\cM^{-1}]$ by its class of weak equivalences.
\end{thm}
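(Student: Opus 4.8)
The plan is to follow Quillen's classical argument: show that the localization functor $\gamma\colon\cM\to\cM[\cW_\cM^{-1}]$, after being restricted to the full subcategory $\cM_{cf}$ of fibrant–cofibrant objects and passed to homotopy classes, is an equivalence onto all of $\cM[\cW_\cM^{-1}]$. First I would assemble the standard ``homotopy calculus'' lemmas that make $\text{Ho}(\cM)$ a well-defined category: when $X$ is cofibrant and $Y$ is fibrant, left homotopy is an equivalence relation on $\Hom_\cM(X,Y)$; on such pairs left homotopy agrees with right homotopy (defined dually via a good path object); and the homotopy relation is compatible with composition. These are the usual diagram chases against the cofibration $X\sqcup X\to\text{Cyl}(X)$ and its dual, using the lifting and factorization axioms, and I would cite \cite{Qui} or \cite{Hir} rather than reproduce them. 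Together they show that $\text{Ho}(\cM)$ as defined is genuinely a category and that $f\mapsto[f]$ is a functor on $\cM_{cf}$. I would also record the Whitehead-type lemma: a weak equivalence between fibrant–cofibrant objects is a homotopy equivalence, proved by factoring it as a trivial cofibration followed by a trivial fibration and applying the lifting axiom in each case.

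Next I would build the two comparison functors. In one direction, note that $\gamma$ sends weak equivalences to isomorphisms, and sends left-homotopic maps $f,g\colon X\to Y$ to the same morphism: if $H\colon\text{Cyl}(X)\to Y$ witnesses the homotopy, then since $p\colon\text{Cyl}(X)\to X$ is a weak equivalence we get $\gamma(i_0)=\gamma(i_1)=\gamma(p)^{-1}$, whence $\gamma(f)=\gamma(H)\gamma(i_0)=\gamma(H)\gamma(i_1)=\gamma(g)$. Hence $\gamma$ descends to a functor $\Phi\colon\text{Ho}(\cM)\to\cM[\cW_\cM^{-1}]$ which is the inclusion on objects. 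In the other direction, fix a fibrant–cofibrant replacement functor $R=(-)^{fib}\circ(-)^{cof}$ together with its natural zig-zag of weak equivalences relating $X$ and $RX$; then $f\mapsto[Rf]$ is well-defined on homotopy classes, and because $R$ carries weak equivalences to weak equivalences between fibrant–cofibrant objects, the Whitehead lemma shows the composite $\cM\to\text{Ho}(\cM)$ inverts weak equivalences. By the universal property of $\cM[\cW_\cM^{-1}]$ this factors uniquely through a functor $\Psi\colon\cM[\cW_\cM^{-1}]\to\text{Ho}(\cM)$. Finally I would check these are mutually inverse: $\Psi\Phi$ carries a fibrant–cofibrant $X$ to $RX$, and the natural zig-zag of weak equivalences between $X$ and $RX$ (both fibrant–cofibrant, so the zig-zag is a homotopy equivalence by Whitehead) yields a natural isomorphism $\Psi\Phi\cong\id$; and $\Phi\Psi$ carries the image of any $X$ to the image of $RX$, with the same zig-zag, which $\gamma$ inverts, giving a natural isomorphism $\Phi\Psi\cong\id$.

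The main obstacle is the first bundle of facts — that the cylinder construction gives a composition-compatible equivalence relation and that left and right homotopy coincide on $\cM_{cf}$ — since essentially all the content of the theorem lives there; once those and the Whitehead lemma are in hand, the construction and comparison of $\Phi$ and $\Psi$ is formal, modulo the routine but slightly delicate bookkeeping required to make fibrant–cofibrant replacement functorial and to upgrade the objectwise weak equivalences $X\leftarrow X^{cof}\to RX$ into genuinely natural transformations.
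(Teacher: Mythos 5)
The paper does not actually prove this statement; it simply cites Hirschhorn, Theorem 8.3.9. Your proposal correctly reproduces the classical Quillen argument that that reference records (homotopy calculus on $\cM_{cf}$, the Whitehead lemma, the functor $\Phi$ induced by $\gamma$ collapsing homotopic maps, and $\Psi$ induced by fibrant--cofibrant replacement via the universal property of localization), so it supplies precisely the content the paper delegates and is sound.
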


\begin{proof}
This is \cite{Hir} Theorem 8.3.9.
\end{proof}

Indeed, if our starting model category $\cM$ was locally small, $\text{Ho}(\cM)$ is a locally small model for $\cM[\cW_\cM^{-1}]$.

\subsection{Quillen Adjunction and Quillen Equivalence}

Now that we have defined the notion of a model category, it would be helpful to have some way of comparing the model structures on these categories.  Note that arguably the most important form of comparison between two categories (equipped with no extra structure) is the data of an adjunction between the two.  With that in mind, we have the following definition.

\begin{defn}
Given two model categories $\cM$ and $\cN$, a \emph{Quillen adjunction} or \emph{Quillen pair} between $\cM$ and $\cN$ is the data of an adjunction $(L\dashv R):\cM\rightleftarrows\cN$ between the two such that one of the following equivalent conditions is satisfied:
\begin{enumerate}
    \item $L$ preserves cofibrations and trivial cofibrations
    \item $R$ preserves fibrations and trivial fibrations
    \item $L$ preserves cofibrations and $R$ preserves fibrations
    \item $L$ preserves trivial cofibrations and $R$ preserves trivial fibrations
\end{enumerate}
\end{defn}

One particularly important aspect of the notion of a Quillen adjunction is that it induces an adjunction on the level of homotopy theory.  In other words,

\begin{thm}
Given a Quillen adjunction $(L\dashv R):\cM\rightleftarrows\cN$, it induces an adjunction $(\bL\dashv \bR):\text{Ho}(\cM)\rightleftarrows\text{Ho}(\cN)$ between homotopy categories.
\end{thm}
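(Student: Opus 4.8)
The plan is to produce the total left and right derived functors of $L$ and $R$ and check that they assemble into an adjunction between the homotopy categories. Recall that the objects of $\text{Ho}(\cM)$ are the fibrant--cofibrant objects of $\cM$. Define $\bL\colon\text{Ho}(\cM)\to\text{Ho}(\cN)$ on objects by $\bL X=(LX)^{fib}$ and $\bR\colon\text{Ho}(\cN)\to\text{Ho}(\cM)$ on objects by $\bR Y=(RY)^{cof}$; on a morphism (a left-homotopy class) one applies $L$ (resp.\ $R$), composes with the replacement maps, and inverts the replacement weak equivalence in the homotopy category (equivalently, lifts against a suitable trivial cofibration/fibration). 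Since $L$ is left Quillen, $\emptyset=L\emptyset\to LX$ is a cofibration for cofibrant $X$, so $(LX)^{fib}$ is again cofibrant; hence $\bL X$ is genuinely fibrant--cofibrant, and dually for $\bR Y$ using that $R$ is right Quillen. Well-definedness on morphisms reduces to the statement that a left Quillen functor sends weak equivalences between cofibrant objects to weak equivalences (Ken Brown's lemma, itself a quick consequence of the factorization and retract axioms) together with the standard fact that a weak equivalence of fibrant objects induces a bijection on homotopy classes of maps out of a cofibrant object; dually for $\bR$, and functoriality is then a routine diagram chase.

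The core of the argument is a natural bijection on hom-sets. Fix a fibrant--cofibrant $X\in\cM$ and a fibrant--cofibrant $Y\in\cN$, write $\pi(-,-)$ for sets of homotopy classes, and establish the chain
\[
\Hom_{\text{Ho}(\cN)}(\bL X,Y)=\pi\big((LX)^{fib},Y\big)\;\cong\;\pi(LX,Y)\;\cong\;\pi(X,RY)\;\cong\;\pi\big(X,(RY)^{cof}\big)=\Hom_{\text{Ho}(\cM)}(X,\bR Y).
\]
The first bijection holds because $LX\to(LX)^{fib}$ is a weak equivalence of cofibrant objects into the fibrant object $Y$; the last, dually, because $(RY)^{cof}\to RY$ is a weak equivalence into which $X$ maps out of a cofibrant object (here one uses that $RY$ is fibrant). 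The middle bijection is the essential point: the set-level adjunction isomorphism $\Hom_\cN(LX,Y)\cong\Hom_\cM(X,RY)$ descends to homotopy classes. To see this, one checks that $L$ transports cylinder objects of the cofibrant $X$ to cylinder objects of $LX$: if $X\sqcup X\to\text{Cyl}(X)\to X$ is a cylinder, then $L(i_0\sqcup i_1)$ is a cofibration since $L$ preserves cofibrations, and $L$ of the weak equivalence $\text{Cyl}(X)\to X$ is again a weak equivalence by Ken Brown's lemma, both objects being cofibrant. Hence a left homotopy between maps $LX\to Y$ transports, under the adjunction, to a left homotopy between the adjoint maps $X\to RY$, and conversely, using dually that $R$ sends path objects of the fibrant $Y$ to path objects of $RY$. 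Since $LX$ is cofibrant and $Y$ fibrant — and likewise $X$ cofibrant, $RY$ fibrant — left and right homotopy coincide and are equivalence relations on both sides, so these transports are mutually inverse.

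Finally, naturality of the composite bijection in $X$ and $Y$ follows from naturality of the original adjunction and of the replacement maps, checked termwise against pre- and post-composition; a natural family of hom-set bijections then determines the unit and counit of an adjunction $(\bL\dashv\bR)$, with the triangle identities automatic.

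I expect the main obstacle to be the middle step: reconciling the ``good'' cylinder/path objects used in the definition of homotopy with the merely not-necessarily-good cylinder/path objects $L(\text{Cyl}(X))$ and $R(\text{Path}(Y))$ that $L$ and $R$ produce, since $L$ need not preserve the trivial fibration $\text{Cyl}(X)\to X$ — only its being a weak equivalence survives. This is absorbed by the standard lemma that, when the source is cofibrant, a left homotopy through any cylinder object can be promoted to one through a good cylinder object (dually for path objects into a fibrant target), so that no homotopical information is lost; once this is in place, the remainder of the argument is formal.
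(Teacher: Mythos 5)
Your argument is correct and is the standard proof of this fact: the paper gives no proof of its own, citing Hirschhorn \cite{Hir} (Section 8.5), and the argument there proceeds in essentially the way you have laid out, via the derived functors $\bL X=(LX)^{fib}$, $\bR Y=(RY)^{cof}$ and the three-step chain of bijections $\pi((LX)^{fib},Y)\cong\pi(LX,Y)\cong\pi(X,RY)\cong\pi(X,(RY)^{cof})$ on homotopy classes, with the middle step handled by transporting cylinder objects under $L$ (Ken Brown's lemma). You have also correctly identified the one genuine subtlety, namely that $L(\text{Cyl}(X))$ is a cylinder object for $LX$ but not necessarily a good one, and resolved it with the standard comparison-of-cylinders lemma for a cofibrant source.
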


\begin{proof}
This can be found in \cite{Hir} (several propositions in section 8.5).
\end{proof}

This gives us a particularly well-structured way of comparing two homotopy theories.  Of course, the nicest form of adjunction between categories is a categorical equivalence.  It will be especially useful to us to import the notion of equivalence into this weaker setting.  

\begin{defn}
Given a Quillen adjunction $(L\dashv R):\cM\rightleftarrows\cN$, we say that it is a \emph{Quillen equivalence} if it descends to an equivalence of categories on the level of homotopy theory.  In particular, this holds if it satisfies one of the following conditions:
\begin{itemize}
    \item The induced adjunction $(\bL\dashv \bR):\text{Ho}(\cM)\rightleftarrows\text{Ho}(\cN)$ between homotopy categories is an equivalence of categories
    \item For any cofibrant object $X\in\cM$ and any fibrant object $Y\in\cN$, a map $LX\to Y$ is a weak equivalence in $\cN$ if and only if the corresponding map $X\to RY$ under the adjunction is a weak equivalence in $\cM$
    \item Both of the following two conditions hold:
    \begin{enumerate}
        \item For every cofibrant object $X\in\cM$, the composition $X\to R(L(X))\to R(L(X)^{fib})$ (known as the derived adjunction unit) is a weak equivalence
        \item For every fibrant object $Y\in\cN$, the composition $L(R(Y)^{cof})\to L(R(Y))\to Y$ (known as the derived adjunction counit) is a weak equivalence.
    \end{enumerate}
\end{itemize}
\end{defn}

In particular, this last characterization will be important to us in demonstrating the non-equivalence between precubical sets and flows.  Another pair of characterizations (dual to one another) will be particularly useful to us going forward as well.

\begin{thm}
Consider the Quillen Pair $(L\dashv R):\cM\rightleftarrows\cN$.
\begin{itemize}
    \item If $L$ creates weak equivalences (i.e., $f\in\Mor(\cM)$ is a weak equivalence if and only if $L(f)$ is), then $(L\dashv R)$ is a Quillen equivalence if and only if for every fibrant object $Y\in\cN$, the adjunction counit $\epsilon: L(R(Y))\to Y$ is a weak equivalence.
    \item If $R$ creates weak equivalences (i.e., $f\in\Mor(\cN)$ is a weak equivalence if and only if $R(f)$ is), then $(L\dashv R)$ is a Quillen equivalence if and only if for every cofibrant object $X\in\cM$, the adjunction unit $\epsilon: X\to R(L(X))$ is a weak equivalence.
\end{itemize}
\end{thm}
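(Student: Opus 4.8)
The plan is to play both bullets off against the derived-unit/derived-counit characterization of Quillen equivalence recorded in the definition above, exploiting the fact that a functor which \emph{creates} weak equivalences both preserves and reflects them. I will treat only the first bullet in detail; the second follows by the evident dualization ($L\leftrightarrow R$, cofibrant $\leftrightarrow$ fibrant, unit $\leftrightarrow$ counit). For the forward implication, assume $(L\dashv R)$ is a Quillen equivalence and let $Y\in\cN$ be fibrant. Then the derived counit $L(R(Y)^{cof})\to L(R(Y))\xrightarrow{\epsilon_Y}Y$ is a weak equivalence. The cofibrant-replacement map $R(Y)^{cof}\to R(Y)$ is a trivial fibration, hence a weak equivalence in $\cM$; since $L$ creates, and therefore preserves, weak equivalences, its image $L(R(Y)^{cof})\to L(R(Y))$ is a weak equivalence in $\cN$, and two-out-of-three forces $\epsilon_Y$ to be a weak equivalence.

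For the converse, suppose $\epsilon_Y\colon L(R(Y))\to Y$ is a weak equivalence for every fibrant $Y$, and verify conditions (1) and (2) of the derived characterization. Condition (2) is immediate: for fibrant $Y$ the first leg $L(R(Y)^{cof})\to L(R(Y))$ of the derived counit is a weak equivalence (again because $L$ preserves weak equivalences) and the second leg is $\epsilon_Y$, so the composite is a weak equivalence. The substance is condition (1). Fix a cofibrant $X$, write $\iota\colon L(X)\to L(X)^{fib}$ for the fibrant-replacement map (a trivial cofibration, in particular a weak equivalence), and let $u\colon X\xrightarrow{\eta_X}R(L(X))\xrightarrow{R(\iota)}R(L(X)^{fib})$ be the derived unit. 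Apply $L$ and postcompose with $\epsilon_{L(X)^{fib}}$: naturality of $\epsilon$ gives $\epsilon_{L(X)^{fib}}\circ L(R(\iota))=\iota\circ\epsilon_{L(X)}$, and then the triangle identity $\epsilon_{L(X)}\circ L(\eta_X)=\id$ collapses $\epsilon_{L(X)^{fib}}\circ L(u)$ to $\iota$, a weak equivalence. Since $L(X)^{fib}$ is fibrant, $\epsilon_{L(X)^{fib}}$ is a weak equivalence by hypothesis, so two-out-of-three makes $L(u)$ a weak equivalence, and hence $u$ is a weak equivalence because $L$ reflects weak equivalences. With (1) and (2) in hand, $(L\dashv R)$ is a Quillen equivalence.

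The only step needing genuine care is condition (1) in the converse, and the device there is the standard one: the derived unit becomes tractable only after applying $L$ and pasting on a counit, whereupon the triangle identity does the work. I expect the rest to be routine two-out-of-three bookkeeping. The points worth double-checking are that fibrant replacement genuinely produces a fibrant object (so the counit hypothesis legitimately applies to $Y=L(X)^{fib}$), and that no auxiliary cofibrancy or fibrancy assumption on $R(Y)$ or $L(X)$ themselves sneaks in — it does not, precisely because "creates weak equivalences" upgrades $L$ to a functor preserving \emph{all} weak equivalences, not merely those between cofibrant objects, which is exactly the strengthening over the usual "reflects weak equivalences between cofibrant objects" hypothesis that makes the clean counit statement possible.
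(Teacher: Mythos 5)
Your proof is correct, and the key device in the hard direction --- applying $L$ to the derived unit $u\colon X\to R(L(X))\to R(L(X)^{fib})$, pasting on $\epsilon_{L(X)^{fib}}$, and collapsing via naturality of $\epsilon$ and the triangle identity $\epsilon_{L(X)}\circ L(\eta_X)=\id$ to obtain $\epsilon_{L(X)^{fib}}\circ L(u)=\iota$ --- is exactly right, as is the use of ``creates'' to both push weak equivalences forward through $L$ (so the first leg of the derived counit is automatically a weak equivalence) and to reflect $L(u)$ back to $u$. The paper itself does not supply a proof; it defers entirely to Lemma~3.3 of Erdal--\.{I}lhan, so there is no in-text argument to compare against, but your route is the standard one and is almost certainly what that reference does. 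One small polish worth making explicit: when you invoke $\epsilon_{L(X)^{fib}}$ as a weak equivalence, you are using that $L(X)^{fib}$ is fibrant (by construction of fibrant replacement), which you flag as a point to double-check --- indeed it holds, so the hypothesis applies legitimately, and no cofibrancy of $R(Y)$ or fibrancy of $L(X)$ is secretly needed, precisely because ``creates'' gives preservation of all weak equivalences rather than only those between cofibrant objects.
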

\begin{proof}
A proof of the first statement (the second is essentially dual) can be found in \cite{ErIl} (Lemma 3.3).
\end{proof}

\subsection{Cofibrantly Generated Model Categories} 

A cofibrantly generated model category is a nice type of model category generated mostly by small data.  We will use these as an entry point into combinatorial model categories, which we use to define a model structure on precubical sets satisfying certain properties.

\begin{defn}
Let $\cC$ be a cocomplete category and take $S\subset\textbf{Mor}(\cC)$.  We define
\begin{itemize}
    \item $\textbf{llp}(S)$ to be the class of morphisms which has the left lifting property with respect to all morphisms in $S$.
    \item $\textbf{rlp}(S)$ to be the class of morphisms which has the right lifting property with respect to all morphisms in $S$.
    \item $\textbf{cell}(S)$ to be the class of transfinite compositions of elements of $S$.
    \item $\textbf{cof}(S):=\llp(\rlp(S))$.
\end{itemize}
\end{defn}

\begin{defn}
A model category $\cC$ is \emph{cofibrantly generated} if there are small sets of morphisms $I, J\subset\textbf{Mor}(\cC)$ such that
\begin{itemize}
    \item $I$ and $J$ admit the small object argument.
    \item $\textbf{cof}(I)$ is precisely the class of cofibrations of $\cC$
    \item $\textbf{cof}(J)$ is precisely the class of trivial cofibrations of $\cC$
\end{itemize}
\end{defn}

One very important proposition for cofibrantly generated model categories is the following.

\begin{prop}
Given a cofibrantly generated model category $\cC$, one has
\begin{itemize}
    \item $\textbf{cof}(I)$ is also the class of retracts of elements of $\cell(I)$.
    \item $\textbf{cof}(J)$ is also the class of retracts of elements of $\cell(J)$.
    \item $\rlp(I)$ is precisely the class of trivial fibrations
    \item $\rlp(J)$ is precisely the class of fibrations
\end{itemize}
\end{prop}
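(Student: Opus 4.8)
The plan is to deduce all four statements from the standard machinery of the small object argument together with the defining properties of a cofibrantly generated model category. The starting observation is that $I$ and $J$ permit the small object argument, so every morphism $f$ factors functorially as $f = g\circ h$ with $h\in\cell(I)$ and $g\in\rlp(I)$, and likewise as $f = g'\circ h'$ with $h'\in\cell(J)$ and $g'\in\rlp(J)$. The second ingredient is the classical ``retract argument'': if $f = g\circ h$ with $h\in\cof(I)$ (in particular if $h\in\cell(I)$, since $\cell(I)\subset\cof(I)$) and $f$ itself lies in $\rlp(\rlp(I))=\cof(I)$\textemdash wait, more precisely, if $f\in\llp(\rlp(I))$ and $g\in\rlp(I)$, then since $f$ has the left lifting property against $g$ one obtains a retraction exhibiting $f$ as a retract of $h$. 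I would state this retract argument as a lemma first (or cite it from \cite{Hir}), since it is used repeatedly.

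The skeleton of the argument runs as follows. First I would prove the two ``$\rlp$'' bullets. By definition $\cof_\cC = \cof(I) = \llp(\rlp(I))$, hence the trivial fibrations, being exactly the maps with the right lifting property against all cofibrations, are $\rlp(\cof(I))$; one checks $\rlp(\llp(\rlp(I))) = \rlp(I)$ (the inclusion $\rlp(\cof(I))\subseteq\rlp(I)$ is because $I\subseteq\cof(I)$, and the reverse inclusion is because any map in $\rlp(I)$ lifts against all of $\llp(\rlp(I))$ by definition of $\llp$). So $\rlp(I)$ is exactly the class of trivial fibrations; the identical argument with $J$ gives that $\rlp(J)$ is exactly the class of fibrations. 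Next, for the first bullet: given $f\in\cof(I)$, apply the small object argument to factor $f = g\circ h$ with $h\in\cell(I)$ and $g\in\rlp(I)$. Since we have just shown $\rlp(I)$ is the trivial fibrations, and $f$ is a cofibration, the lifting axiom provides a lift in the square with $f$ on the left and $g$ on the right, and the standard diagram chase turns this lift into a retraction of $h$ onto $f$; hence $f$ is a retract of an element of $\cell(I)$. Conversely, $\cell(I)\subseteq\cof(I)$ and $\cof(I)$ is closed under retracts (it is an $\llp$ class, hence closed under retracts, or one invokes the retract axiom), so every retract of an element of $\cell(I)$ is in $\cof(I)$. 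The second bullet is the same argument verbatim with $J$ in place of $I$, using that $\rlp(J)$ is the fibrations and that trivial cofibrations lift against fibrations.

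The main subtlety\textemdash really the only place one must be slightly careful\textemdash is the order of the logical dependencies: the ``retract of $\cell$'' characterizations rely on already knowing that $\rlp(I)$ and $\rlp(J)$ coincide with trivial fibrations and fibrations respectively, so those two bullets genuinely must be established first, and their proofs must not secretly invoke the factorization or lifting axioms in a circular way. They do not: the identification $\rlp(\cof(I)) = \rlp(I)$ is purely formal (a property of the Galois connection $\llp\dashv\rlp$), and then one just quotes the model-category definition of (trivial) fibration as a map right-orthogonal to all (trivial) cofibrations. A second minor point is verifying that $\cof(I)$ is closed under retracts; this is either the retract axiom applied to cofibrations, or the general fact that any class of the form $\llp(S)$ is retract-closed, and I would cite whichever is cleaner. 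Beyond that the proof is entirely a matter of assembling the small object argument and the retract argument, so I would keep the write-up short and refer to \cite{Hir} (e.g.\ the development around Theorem 11.2.1 and Corollary 11.2.2 there) for the routine diagram chases.
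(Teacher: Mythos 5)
Your argument is correct and is exactly the standard argument that the paper itself merely delegates to Hirschhorn (the paper's ``proof'' is only a citation to \cite{Hir}, Chapter 11), so in substance you are reconstructing the proof the paper intends.

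Two small refinements are worth noting. First, you have built a dependency that isn't actually needed: to get the lift in the retract argument for the first bullet, you don't need to have already identified $\rlp(I)$ with the trivial fibrations. You have $f\in\cof(I)=\llp(\rlp(I))$ and $g\in\rlp(I)$ by construction, so $f$ lifts against $g$ \emph{directly by definition of} $\llp$; the retract-of-$\cell(I)$ bullets are therefore purely a consequence of the small object argument and the formal Galois connection, independent of the model structure (and likewise for $J$). This is worth knowing because it means bullets one and two would hold for any set $I$ permitting the small object argument in any cocomplete category, not just in a model category. Second, your parenthetical claim that establishing ``$\rlp(I)=$ trivial fibrations'' is ``purely formal'' and ``just quotes the definition'' slightly overstates the case: the equality $\rlp(\cof(I))=\rlp(I)$ is indeed formal, but the identification of $\rlp(\text{cofibrations})$ with the trivial fibrations is \emph{not} a definition in a general model category. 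One direction is the lifting axiom, but the converse (a map with RLP against all cofibrations is a trivial fibration) requires the factorization axiom and the retract argument. There is no circularity in your proof, but the phrasing makes it sound more tautological than it is, and a careful write-up should acknowledge where the retract axiom is actually being invoked.
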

\begin{proof}
Found in \cite{Hir} chapter 11 (combines several propositions).
\end{proof}

Finally, before we move on to discussing combinatorial model categories, we will simply state a particularly important theorem due to Daniel Kan, which allows us to produce a cofibrantly generated model structure from the data of $I$ and $J$ given an appropriate set of weak equivalences.

\begin{thm}
Let $\cC$ be a bicomplete category and $\cW\subset\textbf{Mor}(\cC)$ closed under retracts and satisfying the 2-out-of-3 property.  If $I$ and $J$ are sets of morphisms of $\cC$ such that
\begin{itemize}
    \item Both $I$ and $J$ admit the small object argument
    \item $\cof(J)\subseteq\cof(I)\cap\cW$
    \item $\rlp(I)\subseteq\rlp(J)\cap\cW$
    \item One of $\cof(I)\cap\cW\subseteq\cof(J)$ and $\rlp(J)\cap\cW\subseteq\rlp(I)$ holds,
\end{itemize}
Then $\cC$ is a cofibrantly generated model category with weak equivalences specified by $\cW$, with $I$ a set of generating cofibrations, and $J$ a set of generating trivial cofibrations.
\end{thm}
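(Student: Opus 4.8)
The plan is to verify the model-category axioms directly, declaring the cofibrations to be $\cof(I)$, the fibrations to be $\rlp(J)$, and the weak equivalences to be $\cW$. Several reductions are formal: any $\llp$- or $\rlp$-class is closed under retracts, so $\cof(I)=\llp(\rlp(I))$ and $\rlp(J)$ are retract-closed, and $\cW$ is retract-closed by hypothesis, giving the retract axiom; and the $2$-out-of-$3$ axiom is precisely the standing hypothesis on $\cW$. The real content is to identify the trivial fibrations $\rlp(J)\cap\cW$ with $\rlp(I)$ and the trivial cofibrations $\cof(I)\cap\cW$ with $\cof(J)$. Granting this, both cases of the lifting axiom follow from the definitions of $\cof(I)$ and $\cof(J)$ as $\llp$-classes (an element of $\cof_\cC=\cof(I)$ lifts against a trivial fibration in $\rlp(I)$, and an element of $\cof_\cC\cap\cW_\cC=\cof(J)$ lifts against a fibration in $\rlp(J)$), and both functorial factorizations are produced by the small object argument: applied to $I$ it factors any map as a $\cell(I)\subseteq\cof(I)$ map followed by an $\rlp(I)$ map, i.e.\ a cofibration followed by a trivial fibration; applied to $J$ it gives a $\cell(J)\subseteq\cof(J)$ map followed by an $\rlp(J)$ map, i.e.\ a trivial cofibration followed by a fibration. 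Functoriality of these factorizations is built into the small object argument.

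The identification of the two classes is the retract argument, and here I would split on the ``one of'' alternative. Suppose first that $\rlp(J)\cap\cW\subseteq\rlp(I)$; combined with the standing hypothesis $\rlp(I)\subseteq\rlp(J)\cap\cW$ this gives $\rlp(I)=\rlp(J)\cap\cW$ outright, so the trivial fibrations are exactly $\rlp(I)$. For the trivial cofibrations, take $f\in\cof(I)\cap\cW$ and factor it via the small object argument for $J$ as $f=p\circ i$ with $i\in\cell(J)\subseteq\cof(J)$ and $p\in\rlp(J)$; since $i\in\cof(J)\subseteq\cW$ by hypothesis and $f\in\cW$, two-out-of-three forces $p\in\cW$, hence $p\in\rlp(J)\cap\cW=\rlp(I)$. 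As $f\in\cof(I)$ lifts against $p\in\rlp(I)$, the commutative square with left edge $f$, right edge $p$, top edge $i$, bottom edge $\id$ admits a diagonal filler, and this filler exhibits $f$ as a retract of $i$; since $\cof(J)$ is retract-closed, $f\in\cof(J)$, so $\cof(I)\cap\cW=\cof(J)$. The other alternative, $\cof(I)\cap\cW\subseteq\cof(J)$, is handled dually: one gets $\cof(J)=\cof(I)\cap\cW$ immediately, and then for $p\in\rlp(J)\cap\cW$ one factors $p=q\circ j$ with $j\in\cell(I)\subseteq\cof(I)$ and $q\in\rlp(I)$, uses two-out-of-three (noting $q\in\rlp(I)\subseteq\cW$) to get $j\in\cof(I)\cap\cW=\cof(J)$, lifts $j\in\cof(J)$ against $p\in\rlp(J)$, and reads off $p$ as a retract of $q\in\rlp(I)$, whence $p\in\rlp(I)$.

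With both class identifications established, assembling the definition of a cofibrantly generated model category is bookkeeping: the cofibrations are $\cof(I)$ and the trivial cofibrations are $\cof(J)$ by construction, $I$ and $J$ admit the small object argument by hypothesis, and all axioms have been checked above. I expect the only genuinely delicate point to be the retract argument of the middle paragraph --- specifically, remembering to invoke two-out-of-three to place the intermediate factor in $\cW$, and keeping straight which of the two ``one of'' alternatives is in force, since the roles of $I$ and $J$ (and of $\cof$ versus $\rlp$) interchange between the two cases. Everything else is routine manipulation of lifting properties; alternatively one may cite \cite{Hir} Theorem 11.3.1.
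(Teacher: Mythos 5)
Your argument is correct and complete; the only difference from the paper is that the paper proves nothing here and simply cites \cite{Hir} Theorem~11.3.1, whereas you have reconstructed the standard recognition-theorem proof from scratch. Both case splits are handled properly: you correctly apply $2$-out-of-$3$ to the factor produced by the small object argument, invoke the lift from $\cof(I)=\llp(\rlp(I))$ (resp.\ $\cof(J)=\llp(\rlp(J))$) to exhibit the map in question as a retract, and then close the argument using retract-closure of the $\llp$/$\rlp$ class on the other side. The one minor polish I would add is to state explicitly, at the moment you write down the lifting square, that it commutes (in the first case $p\circ i=f=\id\circ f$; in the second case $q\circ j=p=p\circ\id$), since that commutativity is exactly what makes the lift and hence the retract diagram available --- but this is cosmetic, not a gap.
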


\begin{proof}
This is \cite{Hir} Theorem 11.3.1.
\end{proof}

\subsection{Combinatorial Model categories}

In this section we will discuss a particularly nice type of model structure, generated by an extremely minimal amount of data, but with very good properties. In particular, we will see that all one needs is a class of weak equivalences and a set of generating cofibrations satisfying certain properties.

\begin{defn}
A model category $\cC$ is \emph{combinatorial} if it is a cofibrantly generated model category which is locally presentable as a category.
\end{defn}

That's it.  But this seemingly simple class of model categories admit several extremely powerful classification theorems.  We will discuss only Jeff Smith's theorem here, but there is another important classification result due to Daniel Dugger.

\begin{thm}
(Jeff Smith's Theorem)

Suppose that one has the data of
\begin{itemize}
    \item a locally presentable category $\cC$
    \item a class of morphisms $W\subset\text{Mor}(\cC)$ such that the subcategory of the arrow category of $\cC$ it defines, $\textbf{Arr}_W(\cC)\subset\textbf{Arr}(\cC)$ is an accessibly embedded accessible full subcategory
    \item a small set $I\subset\text{Mor}(\cC)$ of morphisms in $\cC$
\end{itemize}
such that
\begin{itemize}
    \item $\cW$ satisfies the 2-out-of-3 property
    \item $\text{inj}(I)\subset\cW$
    \item $\text{cof}(I)\cap\cW$ is closed under pushout and transfinite composition.
\end{itemize}
Then we have that $\cC$ is a combinatorial (and hence cofibrantly generated) model category with
\begin{itemize}
    \item weak equivalences $\cW$
    \item cofibrations $\text{cof}(I)$.
\end{itemize}
Finally, all combinatorial model structures arise in this way.
\end{thm}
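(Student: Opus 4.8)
The plan is to derive the forward implication from Kan's recognition theorem stated above, and the converse from the standard fact that the weak equivalences of a combinatorial model category form an accessible, accessibly embedded subcategory of its arrow category. For the forward direction one keeps $I$ as the proposed generating cofibrations and declares the fibrations to be the maps in $\rlp(\cof(I)\cap\cW)$; the only real difficulty is that $\cof(I)\cap\cW$ is a priori a proper class, so to feed the situation to Kan's theorem one must produce a small set $J$ with $\cof(J)=\cof(I)\cap\cW$. Two inputs are essentially free: since $\cC$ is locally presentable every object is small, so $I$ and any set $J$ we build admit the small object argument (hence functorial factorizations); and $\cof(I)\cap\cW$ is closed under pushout and transfinite composition by hypothesis, while a short argument using the functorial factorizations (carried out below) shows $\cW$, and hence $\cof(I)\cap\cW$, is closed under retracts — so $\cof(I)\cap\cW$ is weakly saturated.

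The heart of the proof is a ``bounded cofibration'' lemma. Fix a regular cardinal $\kappa$ such that $\cC$ is locally $\kappa$-presentable, the domains and codomains of the members of $I$ are $\kappa$-presentable, and $\textbf{Arr}_\cW(\cC)\subseteq\textbf{Arr}(\cC)$ is closed under $\kappa$-filtered colimits and is $\kappa$-accessibly embedded, and set
\[
J:=\{\, f\in\cof(I)\cap\cW \ :\ \text{the domain and codomain of }f\text{ are }\kappa\text{-presentable}\,\},
\]
which is a set up to isomorphism because there is only a set of $\kappa$-presentable objects. One then proves $\cof(J)=\cof(I)\cap\cW$: the inclusion ``$\subseteq$'' is weak saturation, and ``$\supseteq$'' is shown by exhibiting an arbitrary $g\in\cof(I)\cap\cW$ as a transfinite composite of pushouts of maps in $J$ — exhaust the codomain of $g$ by $\kappa$-small subobjects, repeatedly ``fatten'' them (using that $I$ consists of maps between $\kappa$-small objects) so the induced maps lie in $\cof(I)$, and arrange simultaneously that these induced maps lie in $\cW$ by exploiting that membership in $\cW$ is detected on $\kappa$-small data and preserved under $\kappa$-filtered colimits. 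I expect this simultaneous $\cof(I)$- and $\cW$-fattening to be the main obstacle; everything afterwards is formal bookkeeping.

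Granting the lemma, Kan's hypotheses hold: $\cof(J)\subseteq\cof(I)\cap\cW$; $\rlp(I)\subseteq\rlp(J)\cap\cW$, since $J\subseteq\cof(I)$ forces $\rlp(I)=\rlp(\cof(I))\subseteq\rlp(J)$ and $\mathrm{inj}(I)=\rlp(I)\subseteq\cW$ by hypothesis; and $\cof(I)\cap\cW\subseteq\cof(J)$ is the lemma. There remains the closure of $\cW$ under retracts, needed to apply Kan's theorem as quoted (and frequently bundled into its hypotheses): one first checks $\rlp(J)\cap\cW=\rlp(I)$ — factor $p\in\rlp(J)\cap\cW$ as $q\circ j$ with $j\in\cell(I)$ and $q\in\rlp(I)$; then $j\in\cW$ by two-out-of-three, so $j\in\cof(I)\cap\cW=\cof(J)$ lifts against $p$ and the retract argument makes $p$ a retract of $q$ — and then, given $f$ a retract of $g\in\cW$, passing both $f$ and $g$ through the two functorial factorizations exhibits $f$ stagewise as a composite of a map in $\cof(J)\subseteq\cW$ and a map in $\rlp(I)\subseteq\cW$, so $f\in\cW$. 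Kan's theorem now yields a cofibrantly generated model structure on $\cC$ with weak equivalences $\cW$, generating cofibrations $I$, and generating trivial cofibrations $J$, and local presentability makes it combinatorial by definition.

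For the converse, let $\cC$ be combinatorial with a chosen set $I$ of generating cofibrations and class $\cW$ of weak equivalences, and run Smith's construction on this same data. Then $\cC$ is locally presentable; $\cW$ has two-out-of-three; $\mathrm{inj}(I)=\rlp(I)$ is the class of trivial fibrations, hence contained in $\cW$; and $\cof(I)\cap\cW$ is the class of trivial cofibrations, which equals $\llp(\fib)$ and is therefore closed under pushout and transfinite composition. The one substantive point is that $\textbf{Arr}_\cW(\cC)$ is an accessibly embedded accessible full subcategory of $\textbf{Arr}(\cC)$ — the standard accessibility property of the weak equivalences of a combinatorial model category. The mechanism: the small object argument supplies accessible cofibrant- and fibrant-replacement functors $Q$ and $R$ together with natural weak equivalences, so that $f\in\cW$ if and only if $QRf$ is a weak equivalence between bifibrant objects, if and only if $QRf$ is a homotopy equivalence; and for $\kappa$ large enough the class of homotopy equivalences between bifibrant objects is $\kappa$-accessible and $\kappa$-accessibly embedded, since a homotopy inverse together with the cylinder-object homotopies witnessing it can be found within $\kappa$-presentable data. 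Thus Smith's hypotheses hold, and the forward construction returns exactly the given model structure (cofibrations $\cof(I)$, weak equivalences $\cW$), completing the proof.
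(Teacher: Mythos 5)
The paper does not actually give a proof of Smith's theorem; it simply cites Barwick (Prop.~1.7) and Beke. What you have written is a reconstruction of the standard argument that appears in those references (and in Lurie's HTT, A.2.6, and Dugger), so the comparison is really between your outline and the literature proof, and they agree.

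Your outline has all the right ingredients and in roughly the right place: the reduction to Kan's recognition theorem, the construction of $J$ as the maps in $\cof(I)\cap\cW$ between $\kappa$-presentable objects for a suitably large regular $\kappa$, the bounded-cofibration lemma $\cof(J)=\cof(I)\cap\cW$ as the technical heart, the deduction of retract-closure of $\cW$ via the two functorial factorizations, and the converse via accessibility of the weak-equivalence class of a combinatorial model category. Two remarks. First, there is a logical ordering issue you should make explicit: you use weak saturation of $\cof(I)\cap\cW$ (hence retract-closure of $\cW$) to get the inclusion $\cof(J)\subseteq\cof(I)\cap\cW$, but your argument for retract-closure of $\cW$ in turn uses $\cof(I)\cap\cW\subseteq\cof(J)$. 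This is not circular, but the reader needs to see that the exhaustion argument for $\cof(I)\cap\cW\subseteq\cof(J)$ only uses closure under pushout and transfinite composition (both given by hypothesis), not retracts; one should therefore prove that inclusion first, then retract-closure of $\cW$, and only then the reverse inclusion. Second, the ``fattening'' step in the bounded-cofibration lemma and the accessibility of $\cW$ in the converse are the two points that are genuinely nontrivial and are where essentially all of the technical work in Beke/Barwick/Lurie lives; your sketch correctly identifies them as such but leaves them at the level of plausibility, so as written this is an accurate roadmap rather than a complete proof. That is the same level of detail the paper itself operates at (it defers entirely to the references), so nothing is lost.
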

\begin{proof}
Can be found in \cite{Bar} (proposition 1.7) and \cite{Bek}.
\end{proof}

This theorem essentially gives us a minimal recipe for concocting model categories, and extremely well behaved ones at that.

\subsection{left-induced model structures}

Now, the last topic we will discuss in the theory of model categories is that of induced model structures, specifically left-induced model structures (we will forego discussion of right-induced model structures, as they are irrelevant to our current topic).

\begin{defn}
Let $\cC$ be a bicomplete category and let $\cM$ be a model category.  Furthermore, suppose there is an adjunction of the form
\[
(L\dashv R):\cC\rightleftarrows \cM
\]
running between them.  The \emph{left-induced model structure} on $\cC$, if it exists, has
\begin{enumerate}
    \item weak equivalences given by those morphisms which map to weak equivalences in $\cM$ under $L$ (i.e. $L^{-1}\cW$), 
    \item cofibrations given by those morphisms which map to cofibrations in $\cM$ under $L$ (i.e. $L^{-1}\cof_\cM)$), 
    \item fibrations determined by the other two classes of morphisms.
\end{enumerate}
\end{defn}

We now cite an important theorem \cite{HKRS} which determines conditions (known as acyclicity conditions) under which a left-induced model structure exists.  We will actually state a corollary of the original theorem from \cite{HKRS}, as it is all we will need at the moment.

\begin{thm}
Suppose that $\cC$ is a bicomplete category and $\cM$ is a combinatorial model category and that there is an adjunction of the form 
\[
(L\dashv R):\cC\rightleftarrows \cM
\]
running between them.  Then the left-induced model structure on $\cC$ exists if and only if 
\[\rlp(L^{-1}\cof_\cM)\subset L^{-1}\cW_\cM.\]
\end{thm}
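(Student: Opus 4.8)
The plan is to prove both implications, of which only the converse requires real work. For the forward direction, suppose the left-induced model structure on $\cC$ exists, so that its cofibrations are $\cof_\cC=L^{-1}\cof_\cM$ and its weak equivalences are $\cW_\cC=L^{-1}\cW_\cM$. I would show that $\rlp(\cof_\cC)$ consists of trivial fibrations by a purely formal argument: a map $p\in\rlp(\cof_\cC)$ automatically lies in $\fib_\cC=\rlp(\cof_\cC\cap\cW_\cC)$ since $\cof_\cC\cap\cW_\cC\subseteq\cof_\cC$, and upon factoring $p=qi$ with $i\in\cof_\cC$ and $q\in\fib_\cC\cap\cW_\cC$, the lifting property of $p$ against $i$ together with the retract argument exhibits $p$ as a retract of $q$, so $p\in\cW_\cC$ by the retract axiom. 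Hence $\rlp(L^{-1}\cof_\cM)=\rlp(\cof_\cC)\subseteq\cW_\cC=L^{-1}\cW_\cM$.

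For the converse I would set $\cW_\cC:=L^{-1}\cW_\cM$, $\cof_\cC:=L^{-1}\cof_\cM$, and $\fib_\cC:=\rlp(\cof_\cC\cap\cW_\cC)$, and then assemble the model structure out of two weak factorization systems on $\cC$, namely $(\cof_\cC,\rlp(\cof_\cC))$ and $(\cof_\cC\cap\cW_\cC,\fib_\cC)$. The key input is that, because $\cM$ is combinatorial, each of its two weak factorization systems $(\cof_\cM,\rlp(\cof_\cM))$ and $(\cof_\cM\cap\cW_\cM,\fib_\cM)$ underlies an \emph{accessible algebraic weak factorization system} (the free one on its generating set, which is accessible in a locally presentable category), and that such a system can be \emph{left-lifted} along the adjunction $L\dashv R$ to the bicomplete category $\cC$, producing a functorial factorization whose left class is exactly $L^{-1}$ of the original left class. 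Here I use that $L$ is a left adjoint, so in particular $L^{-1}\cof_\cM\cap L^{-1}\cW_\cM=L^{-1}(\cof_\cM\cap\cW_\cM)$; applying the left-lifting to the two systems of $\cM$ then yields precisely the two weak factorization systems on $\cC$ displayed above, together with the two functorial factorizations of every morphism demanded by the factorization axiom.

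Then I would verify the remaining axioms. The $2$-out-of-$3$ and retract axioms for $\cW_\cC=L^{-1}\cW_\cM$, and the retract axiom for $\cof_\cC=L^{-1}\cof_\cM$ and for $\fib_\cC$, are inherited from $\cM$ since $L$ is a functor preserving retracts and $\fib_\cC$ is defined by a lifting property. The second weak factorization system gives the lifting axiom (ii) and one of the two factorizations immediately. Everything then comes down to the identification $\rlp(\cof_\cC)=\fib_\cC\cap\cW_\cC$, and this is exactly where the acyclicity hypothesis is used: the inclusion $\rlp(\cof_\cC)\subseteq\fib_\cC$ is formal, $\rlp(\cof_\cC)\subseteq\cW_\cC$ is precisely the acyclicity condition, while conversely, given $p\in\fib_\cC\cap\cW_\cC$ one factors $p=qi$ with $i\in\cof_\cC$ and $q\in\rlp(\cof_\cC)$, notes $q\in\cW_\cC$ by acyclicity, deduces $i\in\cof_\cC\cap\cW_\cC$ by $2$-out-of-$3$, lifts $p$ against $i$ (possible since $p\in\fib_\cC$), and concludes by the retract argument that $p\in\rlp(\cof_\cC)$. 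This identification (and its dual $\llp(\fib_\cC)=\cof_\cC\cap\cW_\cC$, built into the second factorization system) supplies lifting axiom (i) and shows both factorizations have the prescribed form, so the left-induced model structure exists.

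The step I expect to be the main obstacle is the left-lifting in the second paragraph: producing the functorial factorization on $\cC$ whose left class is $L^{-1}\cof_\cM$ (resp.\ $L^{-1}(\cof_\cM\cap\cW_\cM)$). Unlike right-induced structures, left-induced ones are not visibly cofibrantly generated — there is no evident small generating set for $L^{-1}\cof_\cM$ — so neither Kan's recognition theorem nor Jeff Smith's theorem applies off the shelf, and a ``co-small-object''-type construction is needed; it is exactly the theorem of \cite{HKRS} that the accessibility coming from combinatoriality of $\cM$ makes this left-lifting go through over an arbitrary bicomplete $\cC$. When $\cC$ is in addition locally presentable, as it will be in all our applications, one can instead invoke the theory of accessible weak factorization systems to present $\cof_\cC=L^{-1}\cof_\cM$ as $\cof(I)$ for a small set $I$ and then apply Jeff Smith's theorem, but a set-free argument is required in the general case.
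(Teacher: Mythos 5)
Your proposal is correct and follows essentially the same route as the paper, which simply defers to \cite{HKRS} Proposition~2.1.4 without supplying details: the forward direction is the usual retract argument showing $\rlp(\cof)$ consists of trivial fibrations in any model category, and the converse assembles the model structure from two weak factorization systems obtained by left-lifting the accessible algebraic WFS underlying the combinatorial structure on $\cM$, with the acyclicity condition $\rlp(L^{-1}\cof_\cM)\subset L^{-1}\cW_\cM$ doing exactly the work of identifying $\rlp(\cof_\cC)=\fib_\cC\cap\cW_\cC$. Your sketch correctly isolates the left-lifting of the factorizations as the genuine technical content (and the place where combinatoriality of $\cM$ enters), which is precisely what the HKRS and Garner--K\c{e}dziorek--Riehl machinery supplies; the remainder of your argument (the two-out-of-three, retract, lifting and factorization verifications, and the retract argument in each direction) is the standard bookkeeping one finds in their proof.
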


\begin{proof}
This is a specialization of \cite{HKRS} Proposition 2.1.4 to the situation of a combinatorial model category.
\end{proof}

\section{Flows, Precubical Sets, and Simplicial Semicategories}

The category of flows is a model for the theory of concurrency.  As we will see shortly, the category of flows may equivalently be thought of as the category of small topologically enriched semicategories.  A basic heuristic for understanding the relation between flows and concurrency is that the objects of flows correspond to possible states that a computation can be in, whereas morphism spaces represent all the possible ways of getting from one state to another.  The topology simply allows us to compare the relation between different ways of getting between states.  We want to be able to say when two execution paths between states are equivalent for the purposes of our computation, and to be able to specify precisely \emph{how} they are equivalent, thus justifying the usage not only of spaces, but more general spaces than those corresponding to mere 1-types.

\begin{defn}
A \emph{flow} $X:=(X^0,\bP X, s, t, *)$ is a quintuple consisting of a discrete set $X^0$, a locally compact space $\bP X$ called the \emph{path space}, \emph{source} and \emph{target} continuous maps $s, t:\bP X\to X^0$, and a continuous and associative \emph{path concatenation} operation
\[
*:\bP X\times_{s, t}\bP X=\{(x, y)\in\bP X^2| t(x)=s(y)\}\to\bP X.
\]
We will abuse notation and write $X$ both for the quintuple and its \emph{total space} $\bP X\sqcup X^0$.

A morphism of flows $f: X\to Y$ consists of a set map $f^0: X^0\to Y^0$ and a map of topological spaces $\bP f:\bP X\to\bP Y$ (we abuse notation and use $f$ as a stand-in for both) such that $f(s(x))=s(f(x))$, $f(t(x))=t(f(x))$, and $f(x*y)=f(x)*f(y)$ for all $x, y\in\bP X$.
\end{defn}
Together flows and maps of flows form the category $\textbf{Flow}$ of flows.

For any flow $X$, given any $\alpha, \beta\in X^0$, one may describe the path space from $\alpha$ to $\beta$ as $\bP_{\alpha, \beta}X:=\{x\in X|s(x)=\alpha\text{ and }t(x)=\beta\}$ equipped with the subspace topology (or the kaonization thereof if needed).

Furthermore, there is a functor $\text{Glob}:\textbf{Top}\to\textbf{Flow}$ which assigns to each topological space $X$ its globe $\text{Glob}(X)$, a flow such that $\text{Glob}(X)^0=\{0, 1\}$, $\bP\text{Glob}(X)=X$, $s=0$, and $t=1$.  Given some string $X_1, ..., X_n$ of topological spaces, we may "concatenate" their globes to define a new flow
\[
\text{Glob}(X_1)*\cdots*\text{Glob}(X_n)
\]
which is the flow that you get from identifying the target of the $i$th globe with the source of the $(i+1)$th globe.  In other words, if we label the flow above as $Y$, we get that $Y^0=\{0, 1, ..., n\}$ and that $\bP Y=X_1\sqcup\cdots\sqcup X_n$ such that $s|_{X_i}=i-1$ and $t|_{X_i}=i$ for $i=1\cdots n$.

\begin{thm}
$\textbf{Flow}$ is a bicomplete, topologically enriched category.
\end{thm}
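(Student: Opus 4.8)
The plan is to prove the three assertions in turn — completeness, cocompleteness, and the topological enrichment — and for the first two the cleanest route is to realize $\Flow$ as a category of algebras for a well-behaved monad over a bicomplete category, so that bicompleteness becomes formal. Let $\cG$ be the category of ``discrete-object $\mathbf{Top}$-graphs,'' whose objects are a set $S$ together with a space $E_{\alpha,\beta}\in\mathbf{Top}$ for each $(\alpha,\beta)\in S\times S$, with the evident morphisms (a function on vertices plus compatible continuous maps on edge-spaces). One first checks $\cG$ is bicomplete: limits are computed componentwise, and colimits by coequalizing vertex sets and coproducting the edge-spaces over the resulting fibers. The forgetful functor $U\colon\Flow\to\cG$ (keep the object set, source, target, and all the $\bP_{\alpha,\beta}X$; forget concatenation) has a left adjoint $F$ — the free flow on a graph, with $\bP_{\alpha,\beta}(FE)=\coprod_{n\ge 1}\coprod_{\alpha=c_0,\dots,c_n=\beta}E_{c_0,c_1}\times\cdots\times E_{c_{n-1},c_n}$ — and $U$ is monadic (it creates isomorphisms and $U$-split coequalizers). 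Writing $T=UF$, so $\Flow\simeq\cG^{T}$, the monad $T$ preserves filtered colimits and reflexive coequalizers, using that $\mathbf{Top}$ is cartesian closed so that finite products are colimit-preserving in each variable; hence $\cG^{T}$ is cocomplete, and it is complete because $U$ creates limits. In particular $\Flow$ is bicomplete. (As the remark about citing Gaucher anticipates, one may instead simply invoke the explicit verification in his original papers, or present $\Flow$ as the models of a limit sketch.)

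To keep the argument honest it is worth recording the concrete shape of the (co)limits. Limits in $\Flow$ are computed componentwise: for $D\colon\cJ\to\Flow$ put $X^{0}:=\lim_{j}D(j)^{0}$ in $\mathbf{Set}$ and, for points $\alpha,\beta$ of $X^0$ with images $\alpha_{j},\beta_{j}$, set $\bP_{\alpha,\beta}X:=\lim_{j}\bP_{\alpha_{j},\beta_{j}}D(j)$ in $\mathbf{Top}$ and $\bP X:=\coprod_{(\alpha,\beta)}\bP_{\alpha,\beta}X$, with the induced $s,t,*$; the universal property is inherited from those of $\mathbf{Set}$ and $\mathbf{Top}$. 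If one reads the local-compactness clause of the definition strictly, one either works in a convenient category where it is automatic or replaces the outcome by the appropriate coreflection (the ``kaonization'' mentioned after the definition), which does not perturb the universal property, the limit already living in the convenient category.

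The colimits are where the genuine work sits, and the coequalizer step is the main obstacle for a bare-hands proof. Coproducts are immediate — disjoint union on both object sets and path spaces. For $\mathrm{coeq}(f,g\colon X\rightrightarrows Y)$ one first forms $Q^{0}:=\mathrm{coeq}(X^{0}\rightrightarrows Y^{0})$ in $\mathbf{Set}$; the snag is that collapsing objects makes new pairs of paths composable, so the path spaces cannot merely be pushed forward. The explicit construction produces $\bP Q$ as a transfinite colimit: begin with $\coprod\bP_{\alpha,\beta}Y$ transported to $Q^{0}$, then iterate (i) freely adjoining formal composites of newly composable paths and (ii) quotienting by associativity together with the identifications forced whenever such a formal composite of already-composable paths must coincide with the existing concatenation — a small-object-argument-flavoured recursion whose colimit carries the universal flow structure. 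This is precisely the bookkeeping packaged by the monadic picture: given coproducts, all coequalizers of $T$-algebras exist because $T$ preserves reflexive coequalizers, so in the write-up I would lean on that rather than run the recursion by hand.

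For the $\mathbf{Top}$-enrichment, note that since $X^{0}$ is discrete, a morphism $X\to Y$ is a point of the discrete set $(Y^{0})^{X^{0}}$ together with a continuous map $\bP X\to\bP Y$ — i.e.\ a point of the mapping space $\mathrm{Map}(\bP X,\bP Y)$, the exponential in the cartesian closed category $\mathbf{Top}$ — subject to the equations encoding compatibility with $s$, $t$, and $*$. Define $\Flow(X,Y)$ to be the subspace (kaonized if necessary) of $(Y^{0})^{X^{0}}\times\mathrm{Map}(\bP X,\bP Y)$ carved out by those equations; being an equalizer of continuous maps it is again an object of $\mathbf{Top}$, with underlying set $\Hom_{\Flow}(X,Y)$. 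Continuity of composition $\Flow(Y,Z)\times\Flow(X,Y)\to\Flow(X,Z)$ and of the unit $\{*\}\to\Flow(X,X)$ follows from the corresponding facts for $\mathrm{Map}(-,-)$ in $\mathbf{Top}$, and associativity and unitality are inherited; so $\Flow$ is a $\mathbf{Top}$-enriched category whose underlying ordinary category is the one defined above. (The same mapping-space description in fact shows $\Flow$ is tensored and cotensored over $\mathbf{Top}$, though that is not needed here.)
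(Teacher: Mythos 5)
The paper disposes of this by citing Gaucher (Theorem 4.17 and Notation 4.14 of \cite{Gau1}), so the real comparison is with Gaucher's own argument, which the present paper re-runs verbatim for $\sSemiCat$ in its proof that $\sSemiCat$ is bicomplete: limits are built componentwise (the object-set functor is a right adjoint, so one knows the underlying set of a limit, and the path spaces are then limits in $\mathbf{Top}$), and cocompleteness follows from Freyd's Adjoint Functor Theorem applied to the constant-diagram functor, with the solution-set condition verified by a $2^{\kappa}$ cardinality bound on the sub-flow generated by the images. You instead exhibit $\Flow$ as monadic over the bicomplete category $\cG$ of discrete-object $\mathbf{Top}$-graphs via the free-flow/forgetful adjunction, and then invoke Beck and Linton: $U$ creates limits, giving completeness at once, and $T=UF$ preserves reflexive coequalizers (coproducts commute with all colimits, and finite products in a cartesian-closed convenient $\mathbf{Top}$ preserve colimits in each variable), so $\cG^{T}$ has reflexive coequalizers and is therefore cocomplete. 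Both routes are correct. The monadic one is slicker and avoids the cardinality bookkeeping entirely; the AFT route is more elementary and is precisely what the paper copies over to $\sSemiCat$, so it has the virtue of uniformity within the paper. Your description of the enrichment --- $\Flow(X,Y)$ as an equalizer inside $(Y^{0})^{X^{0}}\times \mathrm{Map}(\bP X,\bP Y)$ --- matches Gaucher's Notation 4.14. The one point genuinely worth flagging is the one you already flag: the local-compactness clause in the stated definition of a flow does not survive quotients, so one must read the construction inside the convenient category fixed by the paper's standing convention (or apply the appropriate coreflection), not in raw topological spaces; with that understanding there is no gap.
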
 
\begin{proof}
Combines Theorem 4.17 and Notation 4.14 in \cite{Gau1} (in that order).
\end{proof}

We will elide in what follows several details for the sake of brevity, and mostly refer the reader to \cite{Gau1} and \cite{Gau4} to fill in any remaining details on the fundamental theory of Flows.

\subsection{The Homotopy Theory of Flows}

\begin{defn}
Take two morphisms of flows $f, g:X\to Y$.  Then $f$ and $g$ are referred to as \emph{S-homotopic}, denoted $f\sim_S g$ if, considering $\Hom_{\textbf{Flow}}(X, Y)$ equipped with its enriched structure as a topological space, there exists a morphism 
\[h\in\Hom_{\textbf{Top}}([0, 1], \Hom_{\textbf{Flow}}(X, Y))\]
such that $h(0)=f$ and $h(1)=g$.

Two flows $X$ and $Y$ are referred to as \emph{S-homotopy equivalent} if there exists a morphism $f:X\to Y$ and a morphism $g:Y\to X$ such that $g\circ f\sim_S\id_X$ and $f\circ g\sim_S\id_Y$.
\end{defn}

There is an equivalent definition of two maps being S-homotopic that more directly parallels the first definition in topological spaces, but requires more machinery to set up.  Furthermore, our primary interest will be in weak S-homotopy equivalence.

In what follows, we take $I^{gl}_+=\{\text{Glob}(S^{n-1})\hookrightarrow\text{Glob}(D^n)\}_{n=0}^\infty\cup\{\emptyset\hookrightarrow*, *\sqcup*\to*\}$, where we follow the convention $S^{-1}=\emptyset$ and where we have $*$ be the flow consisting of a single object and the empty space of morphisms.

\begin{defn}
A map $f:X\to Y$ of flows is a \emph{weak S-homotopy equivalence} if the map $f^0:X^0\to Y^0$ of zero-skeleta is a bijection, and the map $\bP f:\bP X\to\bP Y$ is a weak-homotopy equivalence of topological spaces.  We denote the class of all weak S-homotopy equivalences by $\cW_S$.
\end{defn}

\begin{thm}
There is a combinatorial model structure on the category of flows such that $I^{gl}_+$ is the generating set of cofibrations and $\cW_S$ is the class of weak equivalences.  Furthermore, with respect to this model structure, all objects are fibrant.
\end{thm}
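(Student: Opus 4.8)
The plan is to verify the hypotheses of Jeff Smith's Theorem (stated above) applied to the category $\Flow$ with the set $I^{gl}_+$ as the candidate generating cofibrations and $\cW_S$ as the candidate weak equivalences. First I would note that $\Flow$ is locally presentable: this follows from the fact that it is (equivalent to) a category of topologically enriched semicategories, or more directly from \cite{Gau1}/\cite{Gau4}, where $\Flow$ is exhibited as a full reflective subcategory of a presheaf category, or as a category of models of a limit sketch; in any case $\Flow$ is bicomplete and accessible. For the accessibility of the subcategory of the arrow category cut out by $\cW_S$, I would observe that weak homotopy equivalences of topological spaces (in our convenient category) form an accessibly embedded accessible subcategory of the arrow category of $\Top$, and that $\cW_S$ is defined by the conjunction of "$f^0$ is a bijection" and "$\bP f$ is a weak homotopy equivalence"; each of these conditions is accessible (the first because $\mathbf{Set}$ and its isomorphisms are accessible, the second by pulling back the known accessible subcategory along the accessible functor $\bP:\Flow\to\Top$), and the intersection of two accessibly embedded accessible subcategories is again accessibly embedded accessible.

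Next I would check the two genuinely model-categorical conditions. The $2$-out-of-$3$ property for $\cW_S$ is immediate from the corresponding property for bijections of sets and for weak homotopy equivalences of spaces. For $\mathrm{inj}(I^{gl}_+)\subset\cW_S$: a map $p:X\to Y$ with the right lifting property against $\emptyset\hookrightarrow *$ and $*\sqcup*\to *$ induces a bijection on $0$-skeleta (surjectivity from the first, injectivity from the second, using that $\bP$ of a point is empty), and right lifting against $\{\mathrm{Glob}(S^{n-1})\hookrightarrow\mathrm{Glob}(D^n)\}$ translates, via the $(\mathrm{Glob}\dashv \bP_{\alpha,\beta})$-type adjunction relating globes to path spaces, into the statement that each $\bP_{\alpha,\beta}p$ has the right lifting property against $\{S^{n-1}\hookrightarrow D^n\}$, hence is a trivial fibration of spaces and in particular a weak equivalence; assembling over all pairs $(\alpha,\beta)$ and using the bijection on objects gives $\bP p\in\cW_{\Top}$, so $p\in\cW_S$.

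The main obstacle is the third hypothesis: that $\cof(I^{gl}_+)\cap\cW_S$ is closed under pushout and transfinite composition. I would handle this by first identifying $\cof(I^{gl}_+)$ concretely — a map in $\cof(I^{gl}_+)$ is (a retract of) a transfinite composite of pushouts of the generators, and along the $\mathrm{Glob}(S^{n-1})\hookrightarrow\mathrm{Glob}(D^n)$ generators such pushouts affect only path spaces by (pushouts of) relative CW inclusions. The key lemma, which I would extract from Gaucher's analysis of the "Q-construction" or the cellular structure of flows in \cite{Gau1}, is that for a relative $I^{gl}_+$-cell complex $X\to Y$ the map $\bP X\to \bP Y$ is computed as a transfinite composite of pushouts of relative CW inclusions into $\Top$ (one must be careful here that path concatenation does not destroy this, which is exactly the content that makes $\Flow$ subtle — this is where I expect to lean hardest on Gaucher's results, e.g. the fact that $\bP$ of a cellular flow is a CW-complex and cellular pushouts behave well). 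Granting this, a map in $\cof(I^{gl}_+)\cap\cW_S$ is a bijection on $0$-skeleta whose path-space map is simultaneously a relative cell complex and a weak equivalence in $\Top$, hence (by the standard fact that trivial cofibrations in $\Top$ are closed under pushout and transfinite composition, together with $2$-out-of-$3$) this class is closed under the required operations. Applying Smith's theorem then yields the combinatorial model structure with $I^{gl}_+$ generating the cofibrations and $\cW_S$ the weak equivalences. Finally, for fibrancy of all objects: a map $X\to *$ is a fibration iff it has the right lifting property against all trivial cofibrations, and since $*$ (the terminal flow, a point with empty path space — or rather the terminal object, which one checks is a point) receives a map from any $X$ that trivially lifts because the path spaces of trivial cofibrations are trivial cofibrations of spaces and $\Top$ is such that everything is fibrant; more carefully, I would note that the generating trivial cofibrations can be taken with contractible-to-a-point-ish source and target data so that lifts against $X\to *$ always exist, or simply cite that this is part of Gaucher's construction in \cite{Gau1}.
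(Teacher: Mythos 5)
The paper's own proof is a one-line citation to Gaucher's Proposition 18.1 in \cite{Gau1}, so your Smith's-theorem reconstruction is a genuinely different route: you are proposing to actually build the model structure rather than import it. That is a reasonable approach, and several pieces of your sketch are sound — the argument that $\mathrm{inj}(I^{gl}_+)\subset\cW_S$ via the globe/path-space adjunction is exactly right, and the 2-out-of-3 verification is immediate as you say. But two things deserve flagging.

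First, the hard hypothesis of Smith's theorem — that $\cof(I^{gl}_+)\cap\cW_S$ is stable under pushout and transfinite composition — is exactly the content of Gaucher's cellular analysis (his ``$\mathbb{Q}$-construction'' and the study of $\bP$ on relative $I^{gl}_+$-cell complexes). You acknowledge you are leaning on this, but it should be said plainly: this is not a small lemma you extract from \cite{Gau1}, it is the technical heart of \cite{Gau1}, so the proposal is not really independent of the source it is meant to replace; it reorganizes the proof around Smith's theorem rather than Kan's recognition theorem, but the load-bearing step is the same. Second, the accessibility hypotheses (local presentability of $\Flow$, accessibility of the weak equivalences) hinge on $\mathbf{Top}$ itself being locally presentable. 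That holds for $\Delta$-generated spaces but fails for compactly generated weak Hausdorff spaces; since the paper's convention allows either, and \cite{Gau1} works in the CGWH setting, the adjective ``combinatorial'' in the statement is only correct in the $\Delta$-generated variant (and is really drawn from \cite{Gau4}, not Gau1, which proves only cofibrantly generated). Your fibrancy paragraph is also muddled: the terminal flow is the one-object flow whose path space is a single point, \emph{not} the flow ``$*$'' with empty path space appearing in $I^{gl}_+$. The clean argument is to observe that one may take the generating trivial cofibrations to be globes of trivial cofibrations of spaces (e.g.\ $\mathrm{Glob}(D^n)\hookrightarrow\mathrm{Glob}(D^n\times[0,1])$); a lifting problem for such a generator against $X\to\mathbf{1}$ is equivalent to a lifting problem in $\mathbf{Top}$ against a map to the point, which always has a solution because every space is fibrant. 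Identifying the generating trivial cofibrations explicitly is what makes that step rigorous, and you should not wave at ``contractible-to-a-point-ish source and target data.''
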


\begin{proof}
This is proposition 18.1 in \cite{Gau1}.
\end{proof}

\subsection{Geometric Realization of Precubical Sets and Homotopy Coherent Nerve of Flows}

Another commonly discussed model for concurrent computing is that of precubical sets. These are presheaves on the category of cubes $\square$, which may be thought of as the subcategory of $\textbf{Cat}$ given by taking all strictly non-decreasing maps between the posets $\{0\le1\}$ (including symmetry maps for the time being).

Note that there is a natural inclusion of posets into flows, where you consider the thin semicategory of the poset under strict inequality.  In other words, we have an inclusion functor $\textbf{PoSet}\hookrightarrow\textbf{Flow}$.  In particular, for any cube $\square[n]=\{0<1\}^n$, we may consider it a flow in a natural way. Now, on cubes $\square[n]$, we define their realization $|\square[n]|$ to be the cofibrant replacement of their inclusion into $\textbf{Flow}$.  Consider the category of precubical sets $\textbf{PrSh}(\square)$.  In other words, functors from the category of cubes, with morphisms only given by face operators, into sets.  There is a geometric realization functor from the category of precubical sets to flows 
\[
|-|:\textbf{PrSh}(\square)\to\textbf{Flow}\text{ such that }K\mapsto\varinjlim_{\square\downarrow K}|\square[n]|
\]

Its right adjoint is the homotopy coherent nerve of a flow, given by 
\[
N:\textbf{Flow}\to\textbf{PrSh}(\square)\text{ such that }X\mapsto(\square[n]\mapsto\Hom_{\textbf{Flow}}(|\square[n]|, X))
\]
We will prove this statement now
\begin{thm}
The geometric realization and the homotopy coherent nerve functors form an adjunction
\[
\begin{tikzcd}
\textbf{PrSh}(\square)
\arrow[r, "|-|"{name=F}, bend left=25] &
\textbf{Flow}
\arrow[l, "N"{name=G}, bend left=25]
\arrow[phantom, from=F, to=G, "\dashv" rotate=-90]
\end{tikzcd}
\]
\end{thm}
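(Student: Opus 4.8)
The plan is to recognize this as an instance of the classical nerve--realization adjunction (Kan's construction): any functor from a small category into a cocomplete category extends along the Yoneda embedding to a cocontinuous functor, which then automatically acquires a right adjoint, and that right adjoint is forced to be the evident ``nerve''. Here the small category is $\square$ (with only face operators), the cocomplete target is $\textbf{Flow}$ --- cocomplete by the earlier theorem that it is bicomplete --- and the functor being extended is $\square[n]\mapsto|\square[n]|$.

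First I would set up the two structural inputs. On the presheaf side, $\textbf{PrSh}(\square)$ is the free cocompletion of $\square$: by the co-Yoneda (density) lemma every precubical set $K$ is canonically the colimit of representables indexed by its category of elements $\square\downarrow K$, i.e. $K\cong\varinjlim_{\square\downarrow K}\square[n]$, and a morphism of presheaves out of $K$ is the same data as a compatible family on this diagram. Consequently the formula $|K|:=\varinjlim_{\square\downarrow K}|\square[n]|$ exhibits $|-|$ as the left Kan extension of $\square[n]\mapsto|\square[n]|$ along the Yoneda embedding $\square\hookrightarrow\textbf{PrSh}(\square)$; since the target is cocomplete, $|-|$ is cocontinuous.

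Next I would produce the hom-set bijection directly, which simultaneously identifies the right adjoint. Fixing a precubical set $K$ and a flow $X$, cocontinuity of $|-|$ together with the fact that $\Hom_{\textbf{Flow}}(-,X)$ takes colimits to limits gives
\[
\Hom_{\textbf{Flow}}(|K|,X)\;\cong\;\varprojlim_{\square\downarrow K}\Hom_{\textbf{Flow}}(|\square[n]|,X)\;=\;\varprojlim_{\square\downarrow K}N(X)(\square[n]),
\]
while applying $\Hom_{\textbf{PrSh}(\square)}(-,N(X))$ to $K\cong\varinjlim_{\square\downarrow K}\square[n]$ and then invoking the Yoneda lemma gives
\[
\Hom_{\textbf{PrSh}(\square)}(K,N(X))\;\cong\;\varprojlim_{\square\downarrow K}\Hom_{\textbf{PrSh}(\square)}(\square[n],N(X))\;\cong\;\varprojlim_{\square\downarrow K}N(X)(\square[n]).
\]
Comparing the two displays yields $\Hom_{\textbf{Flow}}(|K|,X)\cong\Hom_{\textbf{PrSh}(\square)}(K,N(X))$, and unwinding the construction shows the right adjoint is precisely the homotopy coherent nerve $N$ with $N(X)(\square[n])=\Hom_{\textbf{Flow}}(|\square[n]|,X)$. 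The remaining task is to check that every isomorphism above is natural in both variables.

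I expect the main obstacle to be bookkeeping rather than any deep step: one must be careful that the indexing category $\square\downarrow K$ is genuinely the category of elements of $K$ (with face operators as morphisms), that the universal cone presenting $|K|$ as a colimit varies functorially in $K$, and that the Yoneda isomorphism is applied coherently across the whole diagram so that the final bijection is natural in $K$ as well as in $X$. Alternatively --- and more cleanly --- one can avoid the explicit computation altogether by citing the general fact that the left Kan extension along Yoneda of any functor into a cocomplete category is left adjoint to the corresponding singular/nerve functor; the entire content of the theorem then reduces to observing that this left Kan extension is the functor $|-|$ defined by the colimit formula and that its canonical right adjoint is the functor $N$ defined above.
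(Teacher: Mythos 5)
Your proof is correct and is essentially the same as the paper's: both run the chain of isomorphisms $\Hom_{\textbf{Flow}}(|K|,X)\cong\varprojlim_{\square\downarrow K}\Hom_{\textbf{Flow}}(|\square[n]|,X)\cong\varprojlim_{\square\downarrow K}N(X)_n\cong\Hom_{\textbf{PrSh}(\square)}(K,N(X))$, using cocontinuity of $|-|$, the co-Yoneda presentation $K\cong\varinjlim_{\square\downarrow K}\square[n]$, and the Yoneda lemma. The only difference is cosmetic --- you meet in the middle rather than writing one long chain, and you frame the whole thing as an instance of Kan's nerve--realization adjunction and flag the naturality bookkeeping explicitly, which the paper leaves implicit.
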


\begin{proof}
We want to prove that for all flows $X$ and all precubical sets $K$ that we have a natural isomorphism
\[\Hom_{\textbf{Flow}}(|K|, X)\cong\Hom_{\textbf{PrSh}(\square)}(K, N(X)).\]

We do this via
\begin{align*}
    \Hom_{\textbf{Flow}}(|K|, X)&=\Hom_{\textbf{Flow}}(\varinjlim_{\square\downarrow K}|\square[n]|, X)\\
    &\cong\varprojlim_{\square\downarrow K}\Hom_{\textbf{Flow}}(|\square[n]|, X)\\
    &\cong\varprojlim_{\square\downarrow K}N(X)_n\\
    &\cong\varprojlim_{\square\downarrow K}\Hom_{\textbf{PrSh}(\square)}(\square[n], N(X))\\
    &\cong\Hom_{\textbf{PrSh}(\square)}(\varinjlim_{\square\downarrow K}\square[n], N(X))\\
    &\cong\Hom_{\textbf{PrSh}(\square)}(K, N(X))
\end{align*}
\end{proof}

Now that we have our adjunction as above, we must try to find a model structure on precubical sets that upgrades the above adjunction to a Quillen pair.

That said, there is a slightly modified notion of geometric realization that has very slightly nicer properties.  First, one should note that 

\begin{thm}
For any $n\ge0$, one has that $\bP_{0\cdots0,1\cdots1}|\partial\square[n]|$ is homotopic to $S^{n-1}$ and one has that the commutative square
\[
\begin{tikzcd}
  \text{Glob}(S^{n-1}) \arrow[r] \arrow[d, hook]
    & \mid\partial\square[n+1]\mid \arrow[d, hook] \\
  \text{Glob}(D^n) \arrow[r]
& \mid\square[n+1]\mid \end{tikzcd}
\]
is a homotopy pushout.
\end{thm}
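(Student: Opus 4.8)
The plan is to induct on $n$, computing the realization $|\square[n]|$ and its boundary via the colimit definition, and then identifying the relevant path spaces. First I would recall that $|\square[n]|$ is built as a colimit over the category of cubes mapping into the precubical set $\square[n]$, and since $\square[n]$ is representable this colimit is controlled by the face lattice of the $n$-cube. The key computational input is the path space $\bP_{0\cdots 0,\, 1\cdots 1}|\square[n]|$ between the initial vertex $0\cdots 0$ and the terminal vertex $1\cdots 1$: I would argue, following Gaucher, that this is weakly equivalent to a point (contractible), since the realization of $\square[n]$ is designed to be weakly equivalent to the flow $\{0<1\}^n$, whose path space between bottom and top is a single point (it is a thin semicategory). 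This gives the "base" computation; the boundary $\partial\square[n]$ omits the top cell, so $\bP_{0\cdots 0,\,1\cdots1}|\partial\square[n]|$ is the homotopy colimit of the path spaces of the proper faces, and one checks by a Mayer--Vietoris / nerve-of-a-cover argument that this homotopy colimit has the homotopy type of $S^{n-1}$ (the boundary of the $n$-cube's top cell, whose interior is contractible, contributes an $(n-1)$-sphere's worth of "hole"). The case $n=0$ is the convention $S^{-1}=\emptyset$, and $n=1$ is a direct check that $\bP|\partial\square[1]| = \bP(*\sqcup*) = \emptyset = S^0$ appropriately interpreted — actually I should be careful here and instead verify the low-dimensional cases by hand against the conventions in $I^{gl}_+$.

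For the homotopy pushout claim, I would set up the square by using the pushout presentation of $\square[n+1]$ in precubical sets: $\square[n+1]$ is obtained from $\partial\square[n+1]$ by attaching a single top-dimensional cell, i.e. there is a pushout square in $\textbf{PrSh}(\square)$ with corners $\partial\square[n]$, $\square[n]$ (wait — I mean the attaching is along $\partial\square[n+1] \hookrightarrow \square[n+1]$, which as a map of presheaves is the inclusion $I$ at level $n+1$). The geometric realization functor $|-|$ is a left adjoint (by the preceding theorem), hence preserves colimits, so it sends this pushout of precubical sets to a pushout of flows. The content is then to show that this strict pushout of flows is also a \emph{homotopy} pushout, and that under the identification of path spaces it becomes the square with $\text{Glob}(S^{n-1})$, $\text{Glob}(D^n)$, $|\partial\square[n+1]|$, $|\square[n+1]|$. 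The map $\text{Glob}(S^{n-1}) \to |\partial\square[n+1]|$ picks out the path space computed in the first half of the theorem, $\text{Glob}(D^n)\to|\square[n+1]|$ reflects that $\bP_{0\cdots0,1\cdots1}|\square[n+1]|\simeq D^n\simeq *$, and the verification that these fit into a commuting square is formal from the inclusion $|\partial\square[n+1]|\hookrightarrow|\square[n+1]|$.

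The main obstacle I expect is showing the strict pushout of flows is a homotopy pushout — i.e., that one of the maps is a cofibration (or that the pushout is already homotopy-invariant). Pushouts in $\textbf{Flow}$ are subtle because concatenation of paths can create new morphisms and alter path spaces in non-obvious ways; a naive "gluing" intuition can fail. The cleanest route is to observe that $\text{Glob}(S^{n-1})\hookrightarrow\text{Glob}(D^n)$ is (a generating) cofibration in the model structure on $\textbf{Flow}$ (it lies in $I^{gl}_+$), and that $|-|$ sends the generating cofibration $\partial\square[n+1]\hookrightarrow\square[n+1]$ to a map weakly equivalent to it — then invoke that pushouts along cofibrations between cofibrant objects are homotopy pushouts, together with left-properness of $\textbf{Flow}$ (which Gaucher establishes). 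I would need to confirm that $|\partial\square[n+1]|$ and $|\square[n+1]|$ are cofibrant, which follows since realizations of representables are built as cofibrant replacements and $|-|$ of a monomorphism of precubical sets is a cofibration of flows under the hypothesized compatibility. The delicate point is really the identification of the pushout \emph{corner} $\text{Glob}(D^n)$: I would handle this by combining the path-space computation above with the fact that attaching the top cell of $\square[n+1]$ to $\partial\square[n+1]$ affects only the single path space $\bP_{0\cdots0,1\cdots1}$, filling in the $S^{n-1}$ with a disk, and leaves all other path spaces of $|\partial\square[n+1]|$ untouched — this locality is what makes the globular square an honest model for the pushout.
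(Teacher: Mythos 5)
The paper does not actually prove this statement: it is Gaucher's Proposition 4.2.2 in \cite{Gau2}, and the text simply cites it. So there is no ``paper proof'' to compare against; what follows is an assessment of your sketch on its own terms.

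The part of your sketch computing the path spaces is reasonable in outline (contractibility of $\bP_{0\cdots0,1\cdots1}|\square[n]|$ from the weak equivalence $|\square[n]|\simeq\{0<1\}^n$, a Mayer--Vietoris or cover argument for $S^{n-1}$ on the boundary, and the need to check $n=0,1$ by hand against the convention $S^{-1}=\emptyset$). But the strategy you propose for the homotopy pushout has a genuine error. You want to produce the square by applying $|-|$ to ``the pushout presentation of $\square[n+1]$ in $\textbf{PrSh}(\square)$.'' No such pushout exists in a useful form: as a precubical set $\square[n+1]$ is obtained from $\partial\square[n+1]$ by attaching its unique top cell along the \emph{identity} of $\partial\square[n+1]$, so the only pushout square in $\textbf{PrSh}(\square)$ with bottom edge $\partial\square[n+1]\hookrightarrow\square[n+1]$ is the trivial one, and its image under $|-|$ is again trivial. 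More to the point, $\text{Glob}(S^{n-1})$ and $\text{Glob}(D^n)$ are not in the essential image of $|-|$ at all, so ``$|-|$ preserves colimits'' cannot produce the displayed square. Note in this connection that Theorem~3.9 of the paper constructs the variant realization $gl$ precisely so that the analogous square becomes a \emph{strict} pushout; the fact that Gaucher had to introduce $gl$ is itself evidence that for $|-|$ this square is only a homotopy pushout and cannot arise by transporting a pushout from precubical sets.

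The workable version of your idea stays entirely in $\textbf{Flow}$: form the strict pushout
\[
P \;=\; \text{Glob}(D^n)\;\cup_{\text{Glob}(S^{n-1})}\;|\partial\square[n+1]|
\]
in $\textbf{Flow}$, note that it computes the homotopy pushout because $\text{Glob}(S^{n-1})\hookrightarrow\text{Glob}(D^n)$ is a (generating) cofibration and the corners are cofibrant (using the lemma that $L(K)$ is cofibrant for any precubical $K$), and then prove that the induced comparison morphism $P\to|\square[n+1]|$ is a weak S-homotopy equivalence. That comparison is where the actual content lives, and this is exactly the ``locality'' you flag at the very end but leave unresolved: pushing out along a $\text{Glob}$-cofibration freely adjoins new paths, and composing those with existing paths can in principle change $\bP_{\gamma\delta}$ for other pairs $(\gamma,\delta)$. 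What saves the argument here is the specific combinatorial fact that $0\cdots0$ receives no nonconstant paths and $1\cdots1$ emits none in $|\partial\square[n+1]|$, so freely filling in $\bP_{0\cdots0,1\cdots1}$ with a disk creates no new composites elsewhere; this must be stated and used, not merely asserted as ``locality.'' Finally, you also need to check that $P\to|\square[n+1]|$ is the \emph{correct} map (i.e., the one making the displayed square commute and inducing your chosen equivalence $S^{n-1}\simeq\bP_{0\cdots0,1\cdots1}|\partial\square[n+1]|$ on path spaces), which requires pinning down the horizontal arrows in the square rather than leaving them implicit.
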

\begin{proof}
Found in \cite{Gau2} as proposition 4.2.2.
\end{proof}

Now, this naturally makes one wonder if there is some related functor which might render this homotopy pushout into an actual pushout.  Indeed, this is the case.  We have the following theorem/definition of Gaucher.

\begin{thm}
There exists a colimit-preserving functor $gl:\textbf{PrSh}(\square)\to\textbf{Flow}$ such that for all $n$, one has a pushout square
\[
\begin{tikzcd}
  \text{Glob}(S^{n-1}) \arrow[r] \arrow[d, hook]
    & gl(\partial\square[n+1]) \arrow[d, hook] \\
  \text{Glob}(D^n) \arrow[r]
& gl(\square[n+1]) \end{tikzcd}
\].  In particular, this means the maps $gl(\partial\square[n]\hookrightarrow\square[n])$ are all cofibrations.  Furthermore, there exist natural transformations $\mu:gl\to|-|$ and $\nu:|-|\to gl$ which specialize to natural S-homotopy equivalences, and indeed are mutually naturally S-homotopy inverses, for every precubical set $K$.  Finally, there exists a weak S-homotopy equivalence of cocubical flows $gl(\square[*])\to\{0<1\}^*$.  We refer to $gl$ as the \emph{globular realization functor}.
\end{thm}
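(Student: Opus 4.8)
The plan is to obtain $gl$ as the left Kan extension along the Yoneda embedding $\square\hookrightarrow\textbf{PrSh}(\square)$ of a cocubical flow $[n]\mapsto gl(\square[n])$. Since $\textbf{PrSh}(\square)$ is the free cocompletion of $\square$ and $\textbf{Flow}$ is cocomplete, such an extension automatically preserves colimits and satisfies $gl(K)=\varinjlim_{\square\downarrow K}gl(\square[n])$. We construct the cocubical flow by induction on dimension so as to force the pushout squares: put $gl(\square[0])=*$, and, assuming $gl$ defined on cubes of dimension $\le n$ with all precubical coface maps satisfying the cocubical identities, set $gl(\partial\square[n+1]):=\varinjlim_{\square_{/\partial\square[n+1]}}gl(\square[\bullet])$ (only cubes of dimension $\le n$ occur), choose an attaching map $a_{n+1}\colon\text{Glob}(S^{n-1})\to gl(\partial\square[n+1])$ carrying the two objects to the global minimum $\hat0$ and maximum $\hat1$ of $\{0,1\}^{n+1}$ and $S^{n-1}$ homeomorphically onto $\bP_{\hat0,\hat1}gl(\partial\square[n+1])$, and define $gl(\square[n+1])$ to be the pushout of $\text{Glob}(D^n)\leftarrow\text{Glob}(S^{n-1})\xrightarrow{a_{n+1}}gl(\partial\square[n+1])$, with the new coface maps factoring through $gl(\partial\square[n+1])$. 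One then verifies that the cocubical identities persist and that the invariants $gl(\square[n])^0=\{0,1\}^n$ and ``$\bP_{\alpha,\beta}gl(\square[n])$ is empty unless $\alpha<\beta$ in the product order and weakly contractible otherwise'' are preserved; the latter survives the pushout because $\hat0$ and $\hat1$ are global extrema, so $\text{Glob}(D^n)$ contributes no composable pairs and $\bP gl(\square[n+1])$ agrees with $\bP gl(\partial\square[n+1])$ except that $\bP_{\hat0,\hat1}$ is replaced by $D^n\cup_{S^{n-1}}S^{n-1}\cong D^n$, while for $(\alpha,\beta)\ne(\hat0,\hat1)$ some coordinate $i$ has $\alpha_i=\beta_i$, forcing all such paths into the face $\{x_i=\alpha_i\}\cong\square[n]$ where the induction hypothesis applies.

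The step I expect to be the main obstacle is the one input to this induction that is not formal: showing $\bP_{\hat0,\hat1}gl(\partial\square[n+1])$ is a homotopy $(n-1)$-sphere --- ideally literally homeomorphic to $S^{n-1}$ --- so that $a_{n+1}$ can be chosen as above. A path from $\hat0$ to $\hat1$ moves in every coordinate, so it lies in no proper face; the path space is the colimit over $\square_{/\partial\square[n+1]}$ of the diagonal path spaces $\bP_{\hat0_F,\hat1_F}gl(F)\cong D^{\dim F-1}$, glued along common sub-faces and by concatenation through intermediate vertices, and one must identify this colimit with a cell structure on $S^{n-1}$. I would do this either by a direct combinatorial analysis of the face poset, or, more economically, by a simultaneous induction on dimension together with the comparison transformation below and Gaucher's established homotopy equivalence $\bP_{\hat0,\hat1}|\partial\square[n+1]|\simeq S^{n-1}$ (\cite{Gau2}, Proposition~4.2.2).

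Granting the construction, the cofibration claim is immediate: for $n\ge1$ the map $gl(\partial\square[n]\hookrightarrow\square[n])$ is by construction a pushout of $\text{Glob}(S^{n-2})\hookrightarrow\text{Glob}(D^{n-1})\in I^{gl}_+$ (with $S^{-1}=\emptyset$, so the case $n=1$ is $*\sqcup*\hookrightarrow\{0<1\}$), while $gl(\partial\square[0]\hookrightarrow\square[0])$ is $\emptyset\hookrightarrow*\in I^{gl}_+$; running the skeletal filtration of an arbitrary precubical set then shows $gl$ sends every monomorphism to a cofibration, hence every precubical set to a cofibrant flow. For the natural transformations, both $|-|$ and $gl$ preserve colimits, so it suffices to compare the underlying cocubical flows. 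We build $\mu\colon gl\Rightarrow|-|$ by induction using the defining pushout: $\mu$ in dimensions $\le n$ gives $\mu_{\partial\square[n+1]}$ by colimit-preservation, one arranges $\mu_{\partial\square[n+1]}\circ a_{n+1}$ to agree with the attaching map for $|\square[n+1]|$ (possible since the latter is only pinned down up to homotopy by \cite{Gau2}~4.2.2), and the pushout property yields $\mu_{\square[n+1]}$. For $\nu\colon|-|\Rightarrow gl$ the analogous square for $|-|$ is only a homotopy pushout, so instead we invoke homotopy-uniqueness of cofibrant resolutions in the Reedy model structure on $[\square,\textbf{Flow}]$: $gl(\square[\bullet])$ is Reedy-cofibrant by construction and, by the last assertion of the theorem (proved below), Reedy-weakly-equivalent to the cocubical poset-flow $\{0<1\}^\bullet$; replacing $|-|$ if necessary by the Kan extension of a Reedy-cofibrant replacement of $\{0<1\}^\bullet$ (which does not affect any cited property of $|-|$), the same holds for $|\square[\bullet]|$, and comparing the two resolutions over $\{0<1\}^\bullet$ yields $\nu$ together with S-homotopies $\nu\mu\sim\id$, $\mu\nu\sim\id$ on the level of cocubical flows. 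That $\mu_{\square[n]}$ and $\nu_{\square[n]}$ are weak S-homotopy equivalences follows by induction on $n$ --- both are bijective on object sets, and on hom-spaces one compares the strict pushout defining $gl(\square[n])$ with the homotopy pushout for $|\square[n]|$ via the gluing lemma --- and this extends to an arbitrary precubical set $K$: applying both functors to the skeletal filtration exhibits $gl(K)$ and $|K|$ as sequential colimits of (honest, respectively homotopy) pushouts along cofibrations, so the cube-wise equivalences assemble to weak S-homotopy equivalences $\mu_K,\nu_K$, while the cocubical S-homotopies, encoded by maps out of a tensor with $[0,1]$ and tensoring preserving colimits, descend to S-homotopies making $\mu_K$ and $\nu_K$ mutual inverses.

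Finally, for the cocubical weak equivalence $gl(\square[\bullet])\to\{0<1\}^\bullet$: the standard cubes form a cocubical poset-flow $\{0<1\}^\bullet$ with coface maps the evident sub-cube inclusions, and we construct the comparison by the same dimension induction from the defining pushout. The key point is that $\{0<1\}^{n+1}$ is thin, so $\bP_{\hat0,\hat1}\{0<1\}^{n+1}$ is a single point; hence the composite $\text{Glob}(S^{n-1})\xrightarrow{a_{n+1}}gl(\partial\square[n+1])\to\{0<1\}^{n+1}$ (the second map assembled from the comparisons already built on the faces) factors through $\text{Glob}(\mathrm{pt})$ and extends, trivially, over $\text{Glob}(D^n)$, giving $gl(\square[n+1])\to\{0<1\}^{n+1}$ compatibly with all coface maps. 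Levelwise this map is a bijection on the object sets $\{0,1\}^{n+1}$ and sends each hom-space $\bP_{\alpha,\beta}gl(\square[n+1])$ --- empty when $\alpha\not<\beta$ and weakly contractible when $\alpha<\beta$, by the invariants above --- onto the point $\bP_{\alpha,\beta}\{0<1\}^{n+1}$, so it is a weak S-homotopy equivalence of cocubical flows, as required.
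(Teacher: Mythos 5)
The paper does not prove this statement; it is cited verbatim from Gaucher \cite{Gau2}, Theorem 4.2.4, so there is no internal proof to compare against line-by-line. Your reconstruction does follow the general shape of Gaucher's argument --- build the cocubical flow $gl(\square[\bullet])$ inductively so that the pushout squares hold by fiat, extend by left Kan extension along the Yoneda embedding, and then read off cofibrancy and Reedy cofibrancy of $gl(\square[\bullet])$ from the pushouts --- and you are right to flag the identification of $\bP_{\hat0,\hat1}gl(\partial\square[n+1])$ with an $(n-1)$-sphere as the genuinely nontrivial input (this is precisely the content of Gaucher's Proposition 4.2.2, cited just before this theorem). One small overreach: you ask for the attaching map to carry $S^{n-1}$ \emph{homeomorphically} onto the diagonal path space, but nothing in the statement requires this, and it is not what the cited 4.2.2 gives; a weak equivalence suffices for the pushout to have the needed homotopy type, and insisting on a homeomorphism would make the induction harder than it needs to be.

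The place where I think there is a real gap is your construction of $\nu\colon|-|\Rightarrow gl$. Your argument requires $|\square[\bullet]|$ to be Reedy cofibrant, and since the paper's $|-|$ is defined by a functorial cofibrant replacement of $\{0<1\}^\bullet$ (objectwise, not Reedy), this is not automatic. Your fix --- ``replacing $|-|$ if necessary by the Kan extension of a Reedy-cofibrant replacement of $\{0<1\}^\bullet$'' --- does not prove the theorem as stated, because the theorem asserts the existence of $\nu\colon|-|\to gl$ for the particular $|-|$ defined earlier in the paper, not for some modified functor; silently swapping the functor changes the claim. Note by contrast that $\mu$ is unproblematic with your own tools: $gl(\square[\bullet])$ is Reedy cofibrant by construction, and $|\square[\bullet]|\to\{0<1\}^\bullet$ is an objectwise trivial fibration (that is what functorial cofibrant replacement produces), hence a Reedy trivial fibration in the direct Reedy structure on $\square$, so the lift $\mu$ of $gl(\square[\bullet])\to\{0<1\}^\bullet$ exists directly. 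For $\nu$ you would instead need either to show that the given $|\square[\bullet]|$ is in fact Reedy cofibrant, or to factor $gl(\square[\bullet])\to\{0<1\}^\bullet$ through a Reedy trivial fibration and argue the extra factor can be absorbed, or to argue $\nu$ by a more hands-on induction on cubes using the explicit pushout presentation of $gl(\square[n+1])$ and a lifting against a trivial fibration replacement of $gl(\square[n+1])\to\{0<1\}^{n+1}$; as written, the step does not go through.
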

\begin{proof}
Found in \cite{Gau2} as theorem 4.2.4.
\end{proof}

In what follows, we will often use the globular realization for its particularly nice properties, although any realization will do.  What do we mean by any realization?

\begin{thm}
Suppose one has an object-wise weak equivalence of cocubical flows $X\to\{0<1\}^*$ such that $\hat X(\partial\square[n])\to\hat X(\square[n])$ is a cofibration for all $n$ (where $\hat X$ is the extension of $X$ to a functor from precubical sets to flows).  Then there exist natural transformations $\mu^X:gl\to\hat X$ and $\nu^X:\hat X\to gl$ which specialize to natural S-homotopy equivalences, and indeed are mutually naturally S-homotopy inverses, for every precubical set $K$.  Furthermore the diagram
\[
\begin{tikzcd}
  \text{Glob}(S^{n-1}) \arrow[r] \arrow[d, hook]
    & \hat X(\partial\square[n+1]) \arrow[d, hook] \\
  \text{Glob}(D^n) \arrow[r]
& \hat X(\square[n+1]) \end{tikzcd}
\]
is a homotopy pushout square.
\end{thm}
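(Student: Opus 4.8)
The plan is to realize both $gl(\square[*])$ and $X$ as Reedy cofibrant and Reedy fibrant cocubical flows that become isomorphic in the homotopy category of cocubical flows, to transport the resulting equivalence through left Kan extension along the Yoneda embedding, and finally to compare the square in the statement with the genuine pushout square furnished by the preceding theorem. Concretely, equip the functor category $\Flow^{\square}$ of cocubical flows with its Reedy model structure (available because $\square$, having only face operators, is a direct category). Since $\square$ is direct, the $n$th latching object of a cocubical flow $Y$ is $\hat Y(\partial\square[n])$, so a cocubical flow is Reedy cofibrant precisely when $\hat Y(\partial\square[n])\to\hat Y(\square[n])$ is a cofibration of flows for every $n$; hence $X$ is Reedy cofibrant by hypothesis, and $gl(\square[*])$ is Reedy cofibrant by the preceding theorem. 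Because $\square$ is direct, the Reedy fibrations are the objectwise fibrations, and every object of $\Flow$ is fibrant, so every cocubical flow is Reedy fibrant. The maps $gl(\square[*])\to\{0<1\}^*$ and $X\to\{0<1\}^*$ are objectwise weak S-homotopy equivalences, so $gl(\square[*])$ and $X$ are isomorphic in $\text{Ho}(\Flow^{\square})$; being both Reedy cofibrant and fibrant, this isomorphism is represented by an honest map $\mu\colon gl(\square[*])\to X$ which is an objectwise weak S-homotopy equivalence and admits a homotopy inverse $\nu\colon X\to gl(\square[*])$ together with homotopies $\nu\mu\simeq\id$ and $\mu\nu\simeq\id$ taken with respect to the cylinder $(-)\otimes[0,1]$ coming from the tensoring of $\Flow$ over $\textbf{Top}$; these are componentwise S-homotopies.

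Because $gl$ and $\hat X$ preserve colimits, they are the left Kan extensions along the Yoneda embedding of their restrictions to the representables $\square[n]$, so $\mu$ and $\nu$ induce natural transformations $\mu^X\colon gl\to\hat X$ and $\nu^X\colon\hat X\to gl$; moreover $(-)\otimes[0,1]$ is a left adjoint, hence commutes with left Kan extension, so the cocubical homotopies above are carried to natural S-homotopies $\nu^X\mu^X\simeq\id$ and $\mu^X\nu^X\simeq\id$ of the realization functors. It remains to show $\mu^X_K\colon gl(K)\to\hat X(K)$ is a weak S-homotopy equivalence for every precubical set $K$ (the case of $\nu^X$ being identical). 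Filter $K=\varinjlim_n\mathrm{sk}_n K$; then $gl(\mathrm{sk}_{n-1}K)\to gl(\mathrm{sk}_n K)$ is a pushout of $\bigsqcup gl(\partial\square[n])\to\bigsqcup gl(\square[n])$ along $gl$ of the attaching map, hence a cofibration, and similarly for $\hat X$, so these squares are homotopy pushouts. The transformation $\mu^X$ maps the $gl$-square to the $\hat X$-square; on the corners $\bigsqcup gl(\partial\square[n])$ and $\bigsqcup gl(\square[n])$ it is a coproduct of components $\mu_n$, which are weak S-homotopy equivalences, and on $gl(\mathrm{sk}_{n-1}K)$ it is a weak S-homotopy equivalence by induction on $n$ (the base case being a coproduct of copies of $\mu_0$ over the $0$-cells of $K$). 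By the gluing lemma for homotopy pushouts, $\mu^X$ is then a weak S-homotopy equivalence on every skeleton, and hence on $K$, since all transition maps are cofibrations and weak S-homotopy equivalences of flows are stable under such transfinite compositions.

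For the homotopy pushout assertion, fix $n\ge0$. By the preceding theorem the square with corners $\text{Glob}(S^{n-1})$, $gl(\partial\square[n+1])$, $\text{Glob}(D^n)$, $gl(\square[n+1])$ is a genuine pushout whose legs $\text{Glob}(S^{n-1})\hookrightarrow\text{Glob}(D^n)$ (an element of $I^{gl}_+$) and $gl(\partial\square[n+1])\hookrightarrow gl(\square[n+1])$ are cofibrations, so it is a homotopy pushout. Applying $\mu^X$ produces a morphism from this square to the square in the statement that is the identity on the two $\text{Glob}$-corners and is $\mu^X$, a weak S-homotopy equivalence by the previous paragraph, on the corners $\partial\square[n+1]$ and $\square[n+1]$. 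A square receiving an objectwise weak equivalence from a homotopy pushout square is again a homotopy pushout: the induced map on the homotopy colimits of the two defining spans is a weak equivalence and is compatible with the canonical maps to the bottom-right corners, and since the canonical map out of the $gl$-span's homotopy colimit is a weak equivalence (it is a pushout along a cofibration), the two-out-of-three property forces the canonical map from the $\hat X$-span's homotopy colimit to $\hat X(\square[n+1])$ to be a weak S-homotopy equivalence. Hence the square in the statement is a homotopy pushout.

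I expect the main obstacle to lie in the second paragraph: verifying that $\mu^X_K$ is a weak S-homotopy equivalence for \emph{every} precubical set $K$, not merely for representables. The skeletal induction there relies on weak S-homotopy equivalences of flows being stable under the relevant pushouts along cofibrations and under transfinite composition of cofibrations, stability statements one must extract from Gaucher's analysis of the model structure on $\Flow$; it also requires that the cylinder and homotopies chosen at the cocubical level be of a form (here $(-)\otimes[0,1]$) preserved by left Kan extension, so that the equivalences and their inverses stay natural in $K$. These are exactly the technical points that make the parallel comparison of $gl$ with $|-|$ in the preceding theorem nontrivial.
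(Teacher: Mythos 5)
The paper offers no proof of this statement: it simply cites Gaucher, Theorem 4.2.6 of \cite{Gau2}, so there is no in-text argument to compare against. Your blind attempt is therefore a genuine self-contained proposal, and its main idea --- equip $\Flow^\square$ with the Reedy model structure (available since the precube category with only face operators is direct), observe that the hypotheses say exactly that $X$ and $gl(\square[*])$ are Reedy cofibrant, use that every cocubical flow is Reedy fibrant, extract an honest homotopy equivalence between the two bifibrant objects, and Kan extend everything along Yoneda --- is a clean and correct way to organize the content of Gaucher's theorem.

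A few things to tighten. Your second paragraph is logically redundant: once the Kan extension argument in the first paragraph produces natural S-homotopies $\nu^X\mu^X\simeq\id$ and $\mu^X\nu^X\simeq\id$ \emph{for every $K$} (the extension is a natural transformation of functors on all of $\PrSh(\square)$, not just on representables), each $\mu^X_K$ is an honest S-homotopy equivalence and hence a fortiori a weak S-homotopy equivalence; the skeletal induction proves a strictly weaker statement and cannot serve as a fallback if the cylinder-compatibility step in the first paragraph were to fail, since it would then not recover the S-homotopy-inverse claim at all. The real load-bearing technical claim, which you flag only at the end, is that $(-)\otimes[0,1]$ gives a good cylinder for Reedy cofibrant cocubical flows and that the homotopy it produces is transported by left Kan extension; the first of these requires that $\Flow$ satisfy the topological pushout-product axiom (so that the pushout-product of the latching cofibration with $\{0,1\}\hookrightarrow[0,1]$ is a cofibration), and the second follows since $(-)\otimes[0,1]$ is a left adjoint, hence $K\mapsto gl(K)\otimes[0,1]$ is colimit-preserving and agrees with the Kan extension of $[n]\mapsto gl(\square[n])\otimes[0,1]$. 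Both are true but should be stated rather than flagged as obstacles. Finally, for the homotopy pushout assertion you should say explicitly that the maps $\text{Glob}(S^{n-1})\to\hat X(\partial\square[n+1])$ and $\text{Glob}(D^n)\to\hat X(\square[n+1])$ in the target square are the composites of the $gl$-square maps with $\mu^X$, and that the transfer of the homotopy-pushout property across an objectwise weak equivalence of squares invokes the gluing lemma, which applies here because every corner is a cofibrant flow by the paper's Lemma on cofibrancy of realizations.
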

\begin{proof}
Found in \cite{Gau2} as theorem 4.2.6.
\end{proof}

Note that $gl$ has a right adjoint $N_{gl}:\textbf{Flow}\to\textbf{PrSh}(\square)$ defined and verified analogously to that of $|-|$.  Namely, we have that $N_{gl}(X)_n=\Hom_{\textbf{Flow}}(gl(\square[n]), X)$ for all $n$.  We will call this the \emph{globular nerve} of a flow.

Indeed, for any realization $\hat X$, one has a corresponding nerve $N_X$ defined analogously.

\begin{lem}
Fix a realization functor $L:\PrSh(\square)\to\Flow$.  Then for any precubical set $K$, $L(K)$ is a cofibrant flow.
\end{lem}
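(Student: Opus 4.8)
The plan is to show that $\emptyset\to L(K)$ is obtained from the cofibrations $L(\partial\square[n]\hookrightarrow\square[n])$ by the colimit operations under which the class of cofibrations is closed, exploiting that $L$, being a realization functor, preserves all colimits.

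\emph{Step 1: cellular presentation of $K$.} First I would recall the skeletal filtration of a precubical set. Since the face category $\square$ (with only coface maps, no symmetries and no codegeneracies) is a direct category graded by dimension, with no non-identity automorphisms, every precubical set $K$ is the colimit of its skeleta, $K=\varinjlim_n \mathrm{sk}_n K$ with $\mathrm{sk}_{-1}K=\emptyset$, and for each $n\ge 0$ there is a pushout square
\[
\begin{tikzcd}
\coprod_{x\in K_n}\partial\square[n]\arrow[r]\arrow[d,hook] & \mathrm{sk}_{n-1}K\arrow[d,hook]\\
\coprod_{x\in K_n}\square[n]\arrow[r] & \mathrm{sk}_n K
\end{tikzcd}
\]
in $\PrSh(\square)$, where the top arrow attaches each $n$-cell $x\colon\square[n]\to K$ along its restriction to $\partial\square[n]$ (which factors through $\mathrm{sk}_{n-1}K$, as it involves only proper faces of $x$). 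The absence of degeneracies and symmetries is precisely what makes the indexing set of each coproduct simply $K_n$. Consequently $\emptyset\to K$ lies in $\cell(I_\square)$ for $I_\square=\{\partial\square[n]\hookrightarrow\square[n]\}_{n\ge 0}$, i.e. it is a relative $I_\square$-cell complex. (Equivalently, one may invoke the standard fact that the monomorphisms of $\PrSh(\square)$ are exactly $\cof(I_\square)$.)

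\emph{Step 2: apply $L$.} Since $L$ preserves colimits it carries the presentation above to one of $L(\emptyset)\to L(K)$ of the same shape: $L(\emptyset)$ is the initial flow $\emptyset$, $L$ commutes with the coproducts $\coprod_{K_n}$, sends each square above to a pushout square of flows, and commutes with the transfinite composition $\varinjlim_n$. Moreover, by the defining property of a realization functor, each $L(\partial\square[n]\hookrightarrow\square[n])$ is a cofibration of flows.

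\emph{Step 3: conclude.} The class of cofibrations of any model category is weakly saturated — closed under pushout, transfinite composition, coproduct and retract — since $\cof(S)=\llp(\rlp(S))$ always has this property. Hence $\emptyset=L(\emptyset)\to L(K)$, being a transfinite composite of pushouts of coproducts of the cofibrations $L(\partial\square[n]\hookrightarrow\square[n])$, is itself a cofibration; that is, $L(K)$ is cofibrant. The only step requiring any care is Step 1, the cellular decomposition of $K$; but for precubical sets — presheaves on a direct category with trivial automorphism groups — this is entirely standard, so no real obstacle arises, and Steps 2 and 3 are purely formal.
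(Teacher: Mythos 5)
Your proposal is correct and takes essentially the same approach as the paper: both use the skeletal filtration of a precubical set, the attaching-map pushout squares indexed by $K_n$, the fact that a realization functor preserves colimits and sends $\partial\square[n]\hookrightarrow\square[n]$ to cofibrations, and closure of cofibrations under pushout, coproduct and transfinite composition. The paper phrases it as an explicit induction on skeleta while you package the same content as a single relative cell complex argument, but the mathematics is identical.
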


\begin{proof}
This proof is adapted from \cite{Gau3} proposition 7.5.  We will prove this by induction on $n$-skeleta.  Consider an arbitrary precubical set $K$.  Recall that for $L$ to be a realization as defined above, one must have that $L(\partial\square[n]\hookrightarrow\square[n])$ is a cofibration for all $n$.  In particular, we know that $L(\square[0])=*$ and that $L(\emptyset\hookrightarrow\square[0])=\emptyset\hookrightarrow*$ is a cofibration.  In particular, the fact that cofibations are closed under pushout implies that $*\to*\sqcup*$ is a cofibration, and by transfinite composition, that $\sqcup_\alpha *$ is cofibrant for any cardinal $\alpha$.  In particular, this shows that $L(K_{\le0})=L(K)^0=\sqcup_{K_0}*$ is a cofibrant flow.  Now, suppose that $L(K_{\le n-1})$ is cofibrant.  Then, as cofibrations are composed under transfinite composition, it suffices to prove that $L(K_{\le n-1})\to L(K_{\le n})$ is a cofibration.  Given that $L(\partial\square[n])\to L(\square[n])$ is a cofibration, the disjoint union of any number of copies of this morphism can be shown to be as well.  Now, note that one has the pushout diagram 

\[
\begin{tikzcd}
 \sqcup_{\alpha\in K_n}\partial\square[n]\arrow[d, hook]\arrow[r] & K_{\le n-1} \arrow[d, hook]\\
  \sqcup_{\alpha\in K_n}\square[n] \arrow[r]& K_{\le n}.
\end{tikzcd}
\]

As colimits commute with colimits, and $L$ is defined by way of colimits, we have the pushout square of flows
\[
\begin{tikzcd}
 \sqcup_{\alpha\in K_n}L(\partial\square[n])\arrow[d, hook]\arrow[r] & L(K_{\le n-1}) \arrow[d, hook]\\
  \sqcup_{\alpha\in K_n}L(\square[n]) \arrow[r]& L(K_{\le n}).
\end{tikzcd}
\]
Thus, one has that $L(K_{\le n-1})\hookrightarrow L(K_{\le n})$ is a cofibration, as cofibrations are closed under pushout.  Furthermore, as they are closed under transfinite compositions as well, we see that $L(K)$ must be cofibrant.
\end{proof}

\begin{lem}
A flow $X$ is synchronized (is bijective on constant states) if and only if it has the right-lifting property with respect to the set $\{\emptyset\hookrightarrow*, *\sqcup*\to*\}$.
\end{lem}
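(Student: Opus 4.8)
The plan is to reduce both lifting conditions to a statement purely about the set $X^0$ of states, by exploiting that the two maps in $\{\emptyset\hookrightarrow *,\ *\sqcup *\to *\}$ have source and target assembled from copies of the flow $*$, whose path space is empty. Recall that "$X$ has the right lifting property with respect to a set $S$" means that the unique map $X\to\mathbf 1$ to the terminal flow has the right lifting property against every member of $S$; since $\mathbf 1$ is terminal, the bottom edge of every lifting square against $X\to\mathbf 1$ is uniquely determined, so such a square is solvable precisely when there exists an appropriate morphism into $X$ making the upper triangle commute.

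First I would analyze the map $\emptyset\hookrightarrow *$, where $\emptyset$ is the initial flow and $*$ is the flow with one state and empty path space. A morphism of flows $*\to X$ is exactly a choice of an element of $X^0$, since the source/target/concatenation conditions are vacuous when $\bP *=\emptyset$. Hence $X\to\mathbf 1$ has the right lifting property against $\emptyset\hookrightarrow *$ if and only if $X^0\neq\emptyset$.

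Next I would analyze the fold map $\nabla\colon *\sqcup *\to *$. A morphism $*\sqcup *\to X$ is an ordered pair $(x_0,x_1)\in X^0\times X^0$, a morphism $*\to X$ is a single element $x\in X^0$, and restricting the latter along $\nabla$ produces the pair $(x,x)$; so the lifting problem whose top edge is $(x_0,x_1)$ admits a solution exactly when $x_0=x_1$. Quantifying over all such squares, $X\to\mathbf 1$ has the right lifting property against $\nabla$ if and only if $X^0$ has at most one element. Combining the two observations, $X$ has the right lifting property with respect to the whole set if and only if $X^0$ is a singleton, i.e.\ the map $X^0\to\mathbf 1^0$ is a bijection, which is exactly the condition that $X$ be synchronized.

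I do not expect a genuine obstacle here: the content is entirely the translation of each lifting square into data on $0$-skeleta, much as the right lifting property against $\emptyset\to\ast$ and $\ast\sqcup\ast\to\ast$ controls $0$-simplices in simplicial sets. The one point requiring care is to keep $*$ (one state, \emph{empty} path space) distinct from the terminal flow $\mathbf 1$ (one state, one loop), and to note explicitly that $\bP(*\sqcup *)=\emptyset$ as well, so that morphisms out of $*$ and $*\sqcup *$ into $X$ impose no conditions whatsoever on $\bP X$ and the problem collapses cleanly to the combinatorics of $X^0$.
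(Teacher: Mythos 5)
Your proof is internally correct for what it proves, but you have (understandably) been misled by a typo in the statement, and as a result you prove a strictly weaker claim than the one intended. The lemma as printed reads ``A flow $X$ is synchronized (is bijective on constant states) if and only if it has the right-lifting property\dots''; you read ``$X$ has the RLP'' as ``the unique map $X\to\mathbf 1$ has the RLP,'' which forces $X^0$ to be a singleton. But the parenthetical ``is bijective on constant states'' cannot sensibly apply to a single object, and the paper's own explicitly ``analogous'' result in the next subsection is stated for a \emph{morphism} of simplicial semicategories $f\colon K\to L$, not an object. The citation to Gaucher (Proposition 16.2) is likewise a statement about morphisms. So the intended statement is: a \emph{map of flows} $f\colon X\to Y$ is synchronized (i.e.\ $f^0\colon X^0\to Y^0$ is a bijection) if and only if $f$ has the RLP against $\{\emptyset\hookrightarrow *,\ *\sqcup *\to *\}$. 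Your argument establishes only the special case $Y=\mathbf 1$.

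The repair is mechanical and exactly parallels the paper's proof for $\sSemiCat$: for a lifting square against $\emptyset\hookrightarrow *$, the bottom arrow $*\to Y$ is an arbitrary point of $Y^0$ and a lift $*\to X$ is a preimage under $f^0$, so RLP against $\emptyset\hookrightarrow *$ is precisely surjectivity of $f^0$; for a lifting square against $*\sqcup *\to *$, the top arrow picks out a pair $(x_0,x_1)\in X^0\times X^0$ with $f^0(x_0)=f^0(x_1)$ and a lift $*\to X$ must land on a single $x\in X^0$ with $x=x_0$ and $x=x_1$, so RLP against $*\sqcup *\to *$ is precisely injectivity of $f^0$. Your observations that $\bP *=\emptyset$ and $\bP(*\sqcup *)=\emptyset$, so that no conditions are imposed on path spaces, carry over verbatim and are the right thing to emphasize. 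In short: keep the analysis of each square, but run it against a general target flow $Y$ rather than the terminal flow $\mathbf 1$.
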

\begin{proof}
This is found in \cite{Gau1} (proposition 16.2), but we prove an analogous result via identical means in the following subsection.
\end{proof}

We denote a hopeful set of generating cofibrations $I=\{\partial\square[n]\hookrightarrow\square[n]\}_{n=0}^\infty\cup\{\square[0]\sqcup\square[0]\to\square[0]\}$, where $\partial\square[0]=\emptyset$.  However, we come to a deeply unfortunate fact.  While we have adjunctions between $\textbf{PrSh}(\square)$ and $\textbf{Flow}$ given by any one of the realization functors discussed above, this adjunction is not a Quillen equivalence as one might hope.

\begin{thm}
Suppose that there is a model structure on the category of precubical sets along with a realization functor $L:\textbf{PrSh}(\square)\rightleftarrows\textbf{Flow}$ which is the left Quillen adjoint in a Quillen pair $(L\dashv N_{L}):\textbf{PrSh}(\square)\rightleftarrows\textbf{Flow}$ (note that we are essentially only assuming that everything in $I$ is sent to a cofibration and that the images of $\square[n]$ are weakly equivalent to $\{0<1\}^n$ for any $n$).  Then this adjunction cannot be a Quillen equivalence.
\end{thm}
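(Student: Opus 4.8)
The plan is to assume $(L\dashv N_{L})$ is a Quillen equivalence and derive a contradiction from the induced constraint on the derived counit at the flow $\text{Glob}(S^1)$; concretely, I will show that $\text{Glob}(S^1)$ is not weakly $S$-equivalent to $L$ of any precubical set. Since every flow is fibrant, the characterization of Quillen equivalence via derived adjunction counits applies to $\text{Glob}(S^1)$: the composite $L((N_{L}\text{Glob}(S^1))^{cof})\to L(N_{L}\text{Glob}(S^1))\to\text{Glob}(S^1)$ must be a weak $S$-homotopy equivalence. Set $P:=(N_{L}\text{Glob}(S^1))^{cof}$, a precubical set; we thus get a weak $S$-homotopy equivalence $\varphi: L(P)\to\text{Glob}(S^1)$. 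A weak $S$-homotopy equivalence is a bijection on constant states and a weak homotopy equivalence on total path spaces, and the total path space of a flow splits as a coproduct of the subspaces $\bP_{\alpha,\beta}$ over ordered pairs of constant states — clopen pieces that every morphism of flows respects. Hence $\varphi$ forces $L(P)$ to have exactly two constant states $a,b$; relabelling so that $\bP_{a,b}$ is the piece matched with the single nonempty path space of $\text{Glob}(S^1)$, we have $\bP_{a,b}L(P)\simeq S^1$ while $\bP_{a,a}L(P)=\bP_{b,a}L(P)=\bP_{b,b}L(P)=\emptyset$. Since $L$, being a realization functor, is colimit-preserving and acts as the identity on the constant states of cubes, it follows that $P_0=L(P)^0=\{a,b\}$.

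The core of the argument is then a short combinatorial analysis of $P$, using only that $L$ preserves colimits, that $L(\square[0])$ is the one-point flow $*$, and that $L(\square[1])$ is weakly $S$-equivalent to $\{0<1\}$ (so $\bP_{0,1}L(\square[1])$ is weakly contractible, in particular nonempty). Each $1$-cell of $P$ induces a map $L(\square[1])\to L(P)$ carrying the nonempty space $\bP_{0,1}L(\square[1])$ into the path space of $L(P)$ from the image of the $0$-vertex to the image of the $1$-vertex; since three of the four path spaces of $L(P)$ are empty, every $1$-cell of $P$ runs from $a$ to $b$. Therefore $P$ has no $2$-cell: a $2$-cell is a map $\square[2]\to P$, and the four edge-faces of $\square[2]$ would all be $a\to b$ edges, but the edge from $(0,0)$ to $(0,1)$ and the edge from $(0,1)$ to $(1,1)$ would then send the vertex $(0,1)$ both to $b$ and to $a$. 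As every cube of dimension at least $2$ contains a $2$-dimensional face, $P$ is at most $1$-dimensional: it is a disjoint union of copies of $\square[1]$ indexed by $P_1$, with every $0$-vertex glued to $a$ and every $1$-vertex glued to $b$. Applying the colimit-preserving $L$ to this presentation, and using $L(\square[0])=*$, gives $L(P)\cong\text{Glob}(\coprod_{P_1}\bP_{0,1}L(\square[1]))$, which is weakly $S$-equivalent to $\text{Glob}$ of the discrete space $P_1$. Comparing with the first paragraph, $S^1\simeq\bP_{a,b}L(P)\simeq P_1$ as topological spaces; but $S^1$ is connected with $\pi_1(S^1)\neq 1$, so it is not weakly homotopy equivalent to any discrete set. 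This contradiction proves the theorem.

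The genuinely new ingredient here is the combinatorial rigidity just used: any precubical set with only two vertices is automatically at most $1$-dimensional and realizes, under \emph{every} realization functor, to a flow weakly equivalent to the globe of a \emph{discrete} space, because two vertices leave no room to glue in a square. Everything else is routine, and in particular the argument is insensitive both to which admissible model structure is placed on $\PrSh(\square)$ and to which realization functor $L$ is chosen, the only inputs being that $L$ is colimit-preserving, the hypothesis fixing the weak homotopy types of $L(\square[0])$ and $L(\square[1])$, and the fact that a Quillen equivalence makes the derived counit at $\text{Glob}(S^1)$ a weak equivalence. The one bookkeeping point that should be stated with care is the identity $P_0=L(P)^0$: this holds because the constant-states functor $(-)^0:\Flow\to\textbf{Set}$ preserves colimits (it is left adjoint to the indiscrete-flow functor), so $(-)^0\circ L$ and $(-)_0$ are colimit-preserving functors on $\PrSh(\square)$ that agree on representables.
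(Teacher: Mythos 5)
Your proof is correct, and at the top level it runs along the same lines as the paper's: both arguments specialize the derived counit criterion for Quillen equivalences to a globe of a non-discrete space (you use $\text{Glob}(S^1)$, the paper uses $\text{Glob}(S^2)$), and both reduce to the observation that the relevant precubical set realizes to the globe of an essentially discrete space. The intermediate route is genuinely different, though. The paper computes $N_L(\text{Glob}(Y))$ explicitly — showing $N_L(\text{Glob}(Y))_n=\emptyset$ for $n\ge 2$ by a counting argument on constant states of $L(\square[n])$ — and then identifies the derived counit with $L(N_L(\text{Glob}(Y)))\to\text{Glob}(Y)$, which requires knowing that $N_L(\text{Glob}(Y))$ is already cofibrant; the paper achieves this by assuming that the boundary inclusions $\partial\square[n]\hookrightarrow\square[n]$ are cofibrations in $\PrSh(\square)$ (this is the ``mild assumption'' promised in the abstract but absent from the theorem's parenthetical). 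You instead take whatever cofibrant replacement $P$ of $N_L(\text{Glob}(S^1))$ the model structure supplies, and deduce its combinatorial structure (two vertices, every $1$-cell from $a$ to $b$, hence no $2$-cells, hence $1$-dimensional) purely from the constraint that $L(P)\to\text{Glob}(S^1)$ is a weak $S$-equivalence together with $L$ preserving colimits and the fixed weak homotopy types of $L(\square[0])$ and $L(\square[1])$. Your version buys independence from the unstated hypothesis on the cofibrations of $\PrSh(\square)$: it works for any model structure making $(L\dashv N_L)$ a Quillen pair, and the only combinatorial input is the rigidity of two-vertex precubical sets. The bookkeeping via the colimit-preserving functor $(-)^0\circ L=(-)_0$ (both preserving colimits and agreeing on representables) is exactly the right way to make $P_0=L(P)^0$ rigorous. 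The paper's version, by contrast, is shorter when one grants the cofibrancy assumption, since it never needs the ``no $2$-cells'' argument — the emptiness of the nerve in degree $\ge 2$ is seen directly from the absence of flow morphisms $L(\square[n])\to\text{Glob}(Y)$.
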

\begin{proof}
  Recall that given Quillen pair $(L\dashv R):\cC\rightleftarrows\cD$, if $(L\dashv R)$ is a Quillen equivalence, then on every fibrant object $d\in\cD$, the derived counit of the adjunction (composition of cofibrant replacement with the adjunction counit) $L(R(d)^{cof})\to L(R(d))\overset{\varepsilon_d}{\to}d$ is a weak equivalence.  Now, in our case, note that every $X\in\textbf{Flow}$ is fibrant \cite{Gau1}.  Furthermore, note that $\partial\square[n]\hookrightarrow\square[n]$ is a cofibration, and every $K\in\textbf{PrSh}(\square)$ may be built up as the transfinite composition of pushouts of such maps, starting from copies of $\emptyset\hookrightarrow\square[0]$, so every $K\in\textbf{PrSh}(\square)$ is cofibrant.  Thus, $(L\dashv N_{L}):\textbf{PrSh}(\square)\rightleftarrows\textbf{Flow}$ is a Quillen equivalence if and only if for all $X\in\textbf{Flow}$, one has that the natural map $L(N_{L}(X))\to X$ is a weak equivalence.  Now, consider $\text{Glob}(Y)$ for some $Y\in\textbf{Top}$.  Note first that the only $n$ for which one has any maps $L(\square[n])\to\text{Glob}(Y)$ are $n=0$ and $n=1$.  This is because for any $n>1$, one has that there are simply too many distinct vertices of $L(\square[n])$, and one would have to map a nonempty space to the empty space, which is impossible.  Furthermore, there are exactly two maps $L(\square[0])\to\text{Glob}(Y)$, one which singles out $0$ and one which singles out $1$.  Finally, noting that $L(\square[1])=\text{Glob}(Z)$ for some contractible space $Z$, we see that each map $L(\square[1])\to \text{Glob}(Y)$ singles out a different point of $\Hom_\textbf{Top}(Z, Y)$.  Putting all of this data together, we get that 
\[
N_{L}(\text{Glob}(Y))_n=
\begin{cases}
\{0, 1\} & n=0\\
\Hom_\textbf{Top}(Z, Y)_{disc} & n=1\\
\emptyset & n>1
\end{cases}.
\]
Now, since no higher gluing data is specified, this implies that $L(N_{L}(\text{Glob}(Y)))=\text{Glob}(\Hom_\textbf{Top}(Z, Y)_{disc})$.  Noting further that our counit in this case is a map $L(N_{L}(\text{Glob}(Y)))\to\text{Glob}(Y)$, this tells us that our counit in this case essentially amounts to a map from a discrete space to $Y$.  This can only be an equivalence in the case that $Y$ was weakly equivalent to a discrete space to begin with. Thus, taking $Y=S^2$, for example, we have a non-equivalence.
\end{proof}

\bigskip

As an aside, what we have actually demonstrated above can be used to say something stronger.  In particular, even if we replaced the weak equivalences on flows with equivalences which induce weak equivalences on path spaces and equivalences of semicategories on homotopy categories (in analogy with the $\infty$-categorical model structure on topologically enriched categories), this problem would still persist, and we could not get a Quillen equivalence between this $\infty$-semicategorical model structure and precubical sets.  Similarly, it would not work with semisimplicial sets and this new model structure either.  The unusual thing here is that on the level of cubical/simplicial sets and small topologically enriched categories, this would be precisely the Quillen equivalence between the appropriate Joyal-type model structure and the $\infty$-categorical model structure on topologically enriched categories.  Thus, the procedure of "forgetting about identities" on either side of the equivalence results in something strictly less well-behaved, and indeed, in non-Quillen equivalent model categories.  One, in fact, needs identities to make one of the core equivalences of the theory of $\infty$-categories work at all.

\bigskip

\subsection{Quillen Equivalence between Simplicial Semicategories and Flows}

\begin{rem}
In what follows, we will be dealing with several different simplicially enriched categories and at several points it will be necessary to differentiate between their sets of morphisms and their simplicial sets of morphisms.  To limit confusion, we will introduce the following piece of notation.  Suppose $\cC$ is a simplicially enriched category with $X, Y\in\cC$.  We will let $\Hom_\cC(X, Y)$ be the set of morphisms from $X$ to $Y$ and let $\cC(X, Y)$ be the corresponding simplicial set (in other words, $\Hom_\cC(X, Y):=\cC(X, Y)_0$).
\end{rem}

\begin{defn}
A \emph{simplicial semicategory} is a semicategory $K$ enriched in simplicial sets.  Simplicial semicategories and simplicial semifunctors between them form a category $\sSemiCat$.
\end{defn}

We will devote this section to proving the existence of a Quillen equivalence between $\sSemiCat$ and $\Flow$.  First, we have the following.

\begin{prop}
There exists an adjunction $(|-|\dashv Sing):\sSemiCat\rightleftarrows\Flow$.
\end{prop}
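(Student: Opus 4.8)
The plan is to construct the adjunction by working "one Hom-object at a time," exploiting the fact that both $\sSemiCat$ and $\Flow$ are categories of semicategory-like structures enriched (respectively) in simplicial sets and in topological spaces, and that the classical geometric realization/singular set adjunction $(|-|_{\Delta}\dashv \mathrm{Sing}_{\Delta}):\mathbf{sSet}\rightleftarrows\mathbf{Top}$ is a strong monoidal adjunction (both functors preserve finite products, $|-|_{\Delta}$ because it is a left adjoint into a cartesian closed category and sends $\Delta^n\times\Delta^m$ to $|\Delta^n|\times|\Delta^m|$, and $\mathrm{Sing}_{\Delta}$ because it is a right adjoint). First I would define $|-|:\sSemiCat\to\Flow$ on objects to be constant on the object set, i.e. $|K|^0:=\mathrm{Ob}(K)$ (viewed as a discrete set), and on path spaces by realizing each Hom: for $\alpha,\beta\in\mathrm{Ob}(K)$ set $\bP_{\alpha,\beta}|K|:=|K(\alpha,\beta)|_{\Delta}$, with source and target the evident maps to $\mathrm{Ob}(K)$, and with concatenation the composite
\[
|K(\alpha,\beta)|_{\Delta}\times|K(\beta,\gamma)|_{\Delta}\xrightarrow{\ \cong\ }|K(\alpha,\beta)\times K(\beta,\gamma)|_{\Delta}\xrightarrow{|c|}|K(\alpha,\gamma)|_{\Delta},
\]
where the first map is the (inverse of the) monoidal structure iso and $c$ is composition in $K$. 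Associativity of $*$ follows from associativity of $c$ together with coherence of the monoidal structure, and continuity is automatic since everything is built from continuous maps. Then $|-|$ is functorial: a simplicial semifunctor $F:K\to L$ induces $\mathrm{Ob}(F)$ on objects and $|F_{\alpha,\beta}|_{\Delta}$ on path spaces, and compatibility with $s,t,*$ is immediate.

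Dually, I would define $\mathrm{Sing}:\Flow\to\sSemiCat$ by $\mathrm{Ob}(\mathrm{Sing}(X)):=X^0$ and $\mathrm{Sing}(X)(\alpha,\beta):=\mathrm{Sing}_{\Delta}\big(\bP_{\alpha,\beta}X\big)$, with composition the map
\[
\mathrm{Sing}_{\Delta}(\bP_{\alpha,\beta}X)\times\mathrm{Sing}_{\Delta}(\bP_{\beta,\gamma}X)\xrightarrow{\ \cong\ }\mathrm{Sing}_{\Delta}(\bP_{\alpha,\beta}X\times\bP_{\beta,\gamma}X)\xrightarrow{\mathrm{Sing}_{\Delta}(*)}\mathrm{Sing}_{\Delta}(\bP_{\alpha,\gamma}X),
\]
the first map being the lax monoidal structure iso of $\mathrm{Sing}_{\Delta}$ (an iso because $\mathrm{Sing}_{\Delta}$ preserves products). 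Associativity again follows from that of $*$ plus coherence. Functoriality in $X$ is clear.

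To exhibit the adjunction $|-|\dashv\mathrm{Sing}$, I would produce a natural bijection $\Hom_{\Flow}(|K|,X)\cong\Hom_{\sSemiCat}(K,\mathrm{Sing}(X))$. A morphism $|K|\to X$ is a set map $\phi^0:\mathrm{Ob}(K)\to X^0$ together with, for each $\alpha,\beta$, a continuous map $|K(\alpha,\beta)|_{\Delta}\to\bP_{\phi^0\alpha,\phi^0\beta}X$ that is compatible with concatenation; by the $(|-|_{\Delta}\dashv\mathrm{Sing}_{\Delta})$ adjunction on $\mathbf{sSet}$ these correspond bijectively and naturally to simplicial maps $K(\alpha,\beta)\to\mathrm{Sing}(X)(\phi^0\alpha,\phi^0\beta)$, and I must check that the concatenation-compatibility condition transports exactly to the condition of being a simplicial semifunctor. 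This is where the monoidal compatibility of the two adjunctions is used: the mate of the realized-concatenation square is precisely the composition-preservation square for $\mathrm{Sing}$, because the comparison maps defining the two monoidal structures are mates of one another. So the set-level adjunction bijection of $\mathbf{sSet}$-enriched vs. $\mathbf{Top}$-enriched structures is a straightforward, if slightly tedious, diagram chase.

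The main obstacle is bookkeeping rather than conceptual depth: one must be careful that $\bP X$ is required only to be \emph{locally compact} in the definition of a flow, so I should check that $|K(\alpha,\beta)|_{\Delta}$ lands in the chosen convenient category $\mathbf{Top}$ (it does — realizations of simplicial sets are CW complexes, hence in any admissible $\mathbf{Top}$), that $\bP|K|:=\bigsqcup_{\alpha,\beta}|K(\alpha,\beta)|_{\Delta}$ has the right topology for $s,t$ to be continuous with discrete codomain (it does, being a disjoint union indexed by a discrete set), and — if the ambient $\mathbf{Top}$ forces it — apply kaonization where the excerpt's definition of $\bP_{\alpha,\beta}X$ flags it; none of this disturbs the adjunction since kaonization is itself part of the relevant reflective structure. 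Apart from that, the proof is the standard "realization/singular adjunction lifts to enriched (semi)category objects" argument, and I would present it at roughly the level of detail of the geometric realization/homotopy coherent nerve adjunction proved earlier in the paper.
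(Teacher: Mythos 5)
Your construction is exactly the paper's: both functors are the identity on object sets and apply geometric realization/singular set on Hom-objects, with the adjunction obtained Hom-object-wise from $(|-|_{\Delta}\dashv \mathrm{Sing}_{\Delta})$ together with product-preservation of both functors to transfer composition. The only difference is that you spell out the monoidal/mate bookkeeping that the paper dispatches with ``these two functors are clearly adjoints of one another.''
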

\begin{proof}
Let us begin by defining $|-|:\sSemiCat\to\Flow$.  Given any $K\in\sSemiCat$, one may define $|K|\in\Flow$ by taking the same object set, and defining for any two $x, y\in\text{ob}(K)$, $\bP_{x, y}|K|:=|\text{Map}_K(x, y)|$.  This definition also yields a corresponding definition for functors which satisfies all the appropriate associativity conditions.  Conversely, we may define $Sing:\Flow\to\sSemiCat$ as follows.  For any $X\in\Flow$, we define $Sing(X)$ by taking the same object set, and defining for any two $x, y\in X^0$, $\text{Map}_{Sing(X)}(x, y)=Sing(\bP_{x, y}X)$.  This analogously also defines a functor, and these two functors are clearly adjoints of one another.
\end{proof}

From now on, we will employ the notation $\bP^\Delta$ for our mapping simplicial sets in $\sSemiCat$ in analogy with the path spaces in $\Flow$

\begin{defn}  There exists an analogous globe functor $\text{Glob}^\Delta:\textbf{sSet}\to\sSemiCat$ to the one in the case of $\Flow$.

The \emph{generating cofibrations} of $\sSemiCat$ are 
\[
I_{Simp}:=\{\text{Glob}^\Delta(\partial\Delta^n\hookrightarrow\Delta^n)\}_{n=0}^\infty\cup\{\emptyset\hookrightarrow*, *\sqcup*\to*\}
\]
\end{defn}

Before we move on to our main propositions and theorems, we must prove the following lemma.

\begin{lem}
A map of semisimplicial categories $f:K\to L$ is synchronized (induces a bijection on object sets) if and only if it has the right lifting property with respect to $\{\emptyset\hookrightarrow*, *\sqcup*\to*\}$.
\end{lem}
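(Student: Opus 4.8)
The plan is to prove both directions by direct lifting arguments, exactly mirroring the classical characterization of "isofibrations on objects" for enriched categories (and the cited flow analogue, Lemma for flows, \cite{Gau1} Proposition 16.2). The key observation is that the two maps $\emptyset\hookrightarrow *$ and $*\sqcup *\to *$ respectively test surjectivity and injectivity of $f$ on object sets, since $*$ is the simplicial semicategory with one object and empty mapping simplicial set, and lifting problems against these maps only see the object sets (there are no nontrivial morphisms in the source to constrain).

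First I would show that a synchronized $f:K\to L$ has the right lifting property with respect to both maps. For $\emptyset\hookrightarrow *$: a lifting problem is a choice of object $\ell\in\mathrm{ob}(L)$ together with the (empty) constraint from $\emptyset$; since $f$ is surjective on objects, pick any $k\in\mathrm{ob}(K)$ with $f(k)=\ell$, and the constant semifunctor $*\to K$ at $k$ is the desired lift. For $*\sqcup *\to *$: a lifting problem consists of two objects $k_1,k_2\in\mathrm{ob}(K)$ (the image of $*\sqcup *$ in $K$) and one object $\ell\in\mathrm{ob}(L)$ with $f(k_1)=\ell=f(k_2)$; since $f$ is injective on objects, $k_1=k_2$, and the constant semifunctor $*\to K$ at $k_1=k_2$ provides the (unique) lift, which is compatible because there are no morphisms in $*$ to check.

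Conversely, suppose $f$ has the right lifting property against both maps. Surjectivity on objects follows from lifting against $\emptyset\hookrightarrow *$: given $\ell\in\mathrm{ob}(L)$, viewed as a semifunctor $*\to L$, the lift $*\to K$ against the square with left edge $\emptyset\hookrightarrow *$ picks out an object mapping to $\ell$. Injectivity follows from lifting against $*\sqcup *\to *$: given $k_1,k_2\in\mathrm{ob}(K)$ with $f(k_1)=f(k_2)=:\ell$, assemble the semifunctor $*\sqcup *\to K$ hitting $k_1,k_2$ and the semifunctor $*\to L$ at $\ell$; these form a commuting square, so a lift $*\to K$ exists, and commutativity of the upper triangle forces this lift to equal both $k_1$ and $k_2$, whence $k_1=k_2$. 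Since $f$ is then a bijection on objects, it is synchronized.

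The argument is entirely routine; the only point requiring a word of care is that a semifunctor out of $*$ (or $*\sqcup *$) is literally just the data of its effect on objects, because $*$ has no morphisms and hence no mapping-simplicial-set coherence conditions to impose — this is what makes the lifting problems reduce cleanly to set-level statements about $\mathrm{ob}(f)$. No genuine obstacle is expected; this lemma is the semicategorical transcription of a standard fact and is included only to bootstrap the left-induced model structure on $\sSemiCat$.
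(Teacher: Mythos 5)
Your proof is correct and follows essentially the same approach as the paper's: both unwind the two lifting problems against $\emptyset\hookrightarrow *$ and $*\sqcup *\to *$ into surjectivity and injectivity of $f$ on object sets, using the fact that semifunctors out of $*$ and $*\sqcup *$ are determined purely by their effect on objects. The only cosmetic difference is that you state the two implications separately, whereas the paper phrases each lifting condition directly as an ``if and only if'' with the corresponding set-level statement.
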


\begin{proof}
Let us consider the diagrams
\[
\begin{tikzcd}
  \emptyset\arrow[d, hook, "i"]\arrow[r]& K\arrow[d, "f"]\\
  *\arrow[r, "\alpha"]\arrow[ru, dashed, "\exists\tilde{\alpha}"]& L
\end{tikzcd}
\text{ and }
\begin{tikzcd}
  *\sqcup*\arrow[d, "p"]\arrow[r, "\beta"]& K\arrow[d, "f"]\\
  *\arrow[r, "\gamma"]\arrow[ru, dashed, "\exists\tilde{\gamma}"]& L
\end{tikzcd}
\]

The first diagram is simply saying that for all $\alpha\in\Hom_{\sSemiCat}(*, L)\cong L^0$, there exists $\tilde{\alpha}\in\Hom_{\sSemiCat}(*, K)\cong K^0$ such that $f\circ\tilde{\alpha}=\alpha$.  In other words, the map $f^0:K^0\to L^0$ is surjective.

Now, let's unravel what the second diagram is saying.  Labeling the two objects in $*\sqcup*$ as $a$ and $b$ for brevity, the dashed arrow always exists if and only if for any $\beta:*\sqcup*\to K$ and $\gamma:*\to L$ such that $f\circ\beta=\gamma\circ p$ (in other words, such that $f(\beta(a))=f(\beta(b))=\gamma(*)$), there exists a $\tilde{\gamma}:*\to K$ such that
\[
\beta(a)=\tilde{\gamma}(p(a))=\tilde{\gamma}(c)=\tilde{\gamma}(p(b))=\beta(b).
\]
This, in turn holds if and only if $f^0:K^0\to L^0$ is injective.

\end{proof}

Now, we are ready to prove the main results of this section.

Before we can prove that $\sSemiCat$ is a model category equipped with a left-induced model structure, we must prove that it is bicomplete.

\begin{thm}
$\sSemiCat$ is bicomplete.
\end{thm}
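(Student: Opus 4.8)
The plan is to establish completeness and cocompleteness of $\sSemiCat$ separately, in each case by a forgetful-functor argument analogous to the standard proofs that $\textbf{Cat}$ and $\textbf{sSet}$-$\textbf{Cat}$ are bicomplete. First I would note that there is a ``underlying graph'' forgetful functor $U:\sSemiCat\to\textbf{sSet}\text{-}\textbf{Gph}$ to the category of simplicially enriched graphs (a set of objects together with a simplicial set $\cC(x,y)$ for each ordered pair), and that this latter category is plainly bicomplete since it is essentially a category of presheaves-with-varying-index (limits and colimits of the object sets computed in $\textbf{Set}$, and for each pair of objects the hom-simplicial-sets computed levelwise in $\textbf{sSet}$). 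The functor $U$ forgets only the composition operation.

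For limits, the argument is the easy direction: given a diagram $D:\cJ\to\sSemiCat$, form the limit $L$ of $UD$ in $\textbf{sSet}\text{-}\textbf{Gph}$, and observe that the composition maps of the $D(j)$ assemble, by the universal property of limits in $\textbf{sSet}$ applied to $\bP^\Delta L(x,y)\times_{L^0}\bP^\Delta L(y,z)\to\bP^\Delta L(x,z)$, into a unique associative composition on $L$ making it a simplicial semicategory and making the projections semifunctors; this $L$ is then easily checked to be the limit in $\sSemiCat$. In practice it is cleanest to just verify the two cases of products and equalizers: products are formed objectwise on object-sets and hom-simplicial-sets, and the equalizer of $f,g:K\rightrightarrows L$ is the sub-simplicial-semicategory of $K$ on the objects $x$ with $f(x)=g(x)$ and the hom-simplicial-sets $\mathrm{eq}(f,g:\bP^\Delta K(x,y)\rightrightarrows \bP^\Delta L(fx,fy))$, which is closed under composition.

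For colimits, filtered colimits and coproducts are again straightforward (coproducts are disjoint unions of object-sets with the hom-simplicial-sets carried along, no new composites appearing), so by the standard reduction it suffices to construct coequalizers; equivalently, since we have all coproducts, it suffices to show $\sSemiCat$ has pushouts along the generating maps, but for bicompleteness per se one wants all coequalizers. Here the honest route is the adjoint functor / free-forgetful argument: the forgetful functor $U:\sSemiCat\to\textbf{sSet}\text{-}\textbf{Gph}$ has a left adjoint $F$ (the ``free simplicial semicategory on a graph,'' whose hom-simplicial-sets are coproducts indexed by nonempty directed paths in the graph of the tensor of the edge-simplicial-sets along the path), $U$ is monadic (it creates those limits it needs and reflects isomorphisms, and $\textbf{sSet}\text{-}\textbf{Gph}$ has the requisite coequalizers of $U$-split pairs), and the induced monad is finitary; since $\textbf{sSet}\text{-}\textbf{Gph}$ is cocomplete, the category of algebras for an accessible (indeed finitary) monad on a cocomplete category is cocomplete. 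Hence $\sSemiCat$ is cocomplete.

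The main obstacle is the construction and bookkeeping of colimits, specifically the free simplicial semicategory monad and the verification that $U$ is monadic with the relevant split coequalizers existing: unlike limits, colimits in $\sSemiCat$ do genuinely create new composable strings of morphisms (a pushout can force previously-unrelated arrows to become composable), so one cannot compute them naively in $\textbf{sSet}\text{-}\textbf{Gph}$. I would handle this by invoking the Barr--Beck monadicity theorem together with the fact that $\sSemiCat$ is (like $\textbf{Cat}$) a category of models for an essentially algebraic / finite-limit theory over $\textbf{sSet}$, so that all the accessibility and cocompleteness inputs are automatic; the remaining genuinely combinatorial content is just the explicit path-sum description of $F$, which I would state and then leave the associativity checks to the reader as routine. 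An alternative, if one prefers to avoid monadicity machinery, is to observe that $\sSemiCat$ is equivalent to the category of ``non-unital simplicially enriched categories,'' which is the category of non-unital $\textbf{sSet}$-categories and thus bicomplete by the same argument Kelly gives for enriched categories; either way the proof is a citation-plus-sketch rather than a full computation.
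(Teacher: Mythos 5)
Your proposal is correct, and your treatment of limits is essentially the paper's (compute object sets and hom-simplicial-sets separately, observe that the compositions assemble by the universal property). The route to cocompleteness, however, is genuinely different. The paper follows Gaucher's template for $\Flow$: having shown $\sSemiCat$ is complete, it applies Freyd's adjoint functor theorem to the constant-diagram functor $\Delta_I:\sSemiCat\to\sSemiCat^I$, verifying the solution-set condition directly with a cardinality bound ($\kappa=\aleph_0\cdot\sum_i\#(D_i^0\sqcup\bP^\Delta D_i)$, then factoring any cocone through the sub-simplicial-semicategory generated by the images). You instead factor through the forgetful functor $U:\sSemiCat\to\textbf{sSet}\text{-}\textbf{Gph}$, exhibit its left adjoint $F$ explicitly as the free-path construction, invoke Barr--Beck monadicity, note the monad is finitary, and conclude cocompleteness from the general fact that algebras for a finitary monad on a cocomplete (indeed locally presentable) base form a cocomplete category. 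The trade-off: the paper's argument is shorter to state and hews closely to the $\Flow$ proof it is adapting, but it yields only existence; your monadicity argument costs a bit more setup (the explicit description of $F$, the Beck conditions) but buys structural information --- in particular it shows $\sSemiCat$ is locally finitely presentable, which the paper eventually needs anyway for Jeff Smith's theorem and left-induction but obtains via separate citation. Your alternative remark, that one can view $\sSemiCat$ as models of an essentially algebraic (finite-limit) theory over $\textbf{sSet}$, is another valid shortcut to the same conclusion, and arguably the cleanest if one is willing to cite it. The one place to be careful is your aside that Kelly's argument for $\cV$-$\textbf{Cat}$ applies verbatim to the non-unital case; it does go through, but Kelly treats unital enriched categories, so you would need to say a word about why dropping identities does not break the construction of coequalizers --- the monadicity argument you give first is the cleaner way to make that precise.
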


\begin{proof}
This proof is analogous to the proof that $\Flow$ is bicomplete (found in \cite{Gau1}), and essentially follows from the bicompleteness of $\textbf{sSet}$.  Note first and foremost that for a set $A$ and a simplicial semicategory $K$, letting $F:\textbf{Set}\to\sSemiCat$ be the functor sending a set to the simplicial semicategory with that set as objects and no morphisms,
\[
\Hom_\textbf{Set}(A, K^0)\cong\Hom_\sSemiCat(FA, K).
\]
Thus, the object set functor is a right adjoint, and so if limits exist in $\sSemiCat$, we know exactly what their object sets must look like.  Indeed, given a (small) diagram $K_{(-)}:I\to\sSemiCat$, we can define its limit $\varprojlim_iK_i$ as follows:
\begin{itemize}
    \item $(\varprojlim_iK_i)^0=\varprojlim_i(K_i^0)$.
    \item Given any two $\alpha, \beta\in(\varprojlim_iK_i)^0$, taking $\alpha_i$ and $\beta_i$ to be their images in $K_i$ for $i\in I$, define 
    \[
    \bP^\Delta_{\alpha\beta}(\varprojlim_iK_i)=\varprojlim_i\bP^\Delta_{\alpha_i\beta_i}K_i.
    \]
    \item Given any three $\alpha,\beta,\gamma\in(\varprojlim_iK_i)^0$, define 
    \[
    *:\bP^\Delta_{\alpha\beta}(\varprojlim_iK_i)\times\bP^\Delta_{\beta\gamma}(\varprojlim_iK_i)\to\bP^\Delta_{\alpha\gamma}(\varprojlim_iK_i)
    \]
    as the limit over all $i\in I$ of the $i$th level compositions
    \[
    *_i:\bP^\Delta_{\alpha_i\beta_i}K_i\times\bP^\Delta_{\beta_i\gamma_i}K_i\to\bP^\Delta_{\alpha_i\gamma_i}K_i.
    \]
\end{itemize}
Taken altogether, this is sufficient to define limits of simplicial semicategories.  For colimits, a slightly subtler argument is needed.  In particular, given that $\sSemiCat$ is complete, we prove that colimits exist by appealing to Freyd's Adjoint Functor Theorem.  In particular, let $\Delta_I:\sSemiCat\to\sSemiCat^I$ denote the constant diagram functor from simplicial semicategories to the category of $I$-shaped diagrams in simplicial semicategories (we assume, of course, that $I$ is small).  Clearly, $\Delta_I$ commutes with limits (and is thus continuous).  We now wish to show that $\Delta_I$ satisfies the solution set condition.  Take a diagram $D\in\sSemiCat^I$.  As in Gaucher, we note that all morphisms $f: D\to\Delta_IK$ for $K\in\sSemiCat$ form a proper class of solutions, so let's try to pair this down to a set.  Now, consider the cardinal $\kappa=\aleph_0\cdot\sum_{i\in I}\#(D_i^0\sqcup\bP^\Delta D_i)$ (where here we have the cardinalities of the underlying sets).  Now, choose a representative for every isomorphism class of simplicial semicategories whose underlying object and morphism sets have cardinality less than or equal to $2^\kappa$, and let $\cA$ be the set of all these representatives.  Then $\bigcup_{A\in\cA}\Hom_{\sSemiCat^I}(D, \Delta_IA)$ forms a set of solutions, which may be proved as follows.  Consider an arbitrary natural transformation $\alpha:D\to\Delta_IK$ for some $K\in\sSemiCat$.  Then for every $i\in I$, we get a map $\alpha_i:D_i\to K$.  Thus, we obtain a sub-simplicical semicategory $\langle\bigcup_{i\in I}\alpha_i(D_i)\rangle\subset K$ generated by the images of the $D_i$ with overall cardinality less than or equal to $2^\kappa$.  Thus, $\langle\bigcup_{i\in I}\alpha_i(D_i)\rangle\cong A$ for some $A\in\cA$, and our morphism must factor through some map $\beta:D\to\Delta_IA$ for $A\in\cA$.
\end{proof}

Now we can continue on with left-induction of the model structure.

\begin{prop}
The model structure on $\Flow$ may be left-induced via the adjunction introduced in the previous proposition.  This upgrades $(|-|\dashv Sing):\sSemiCat\rightleftarrows\Flow$ into a Quillen pair.
\end{prop}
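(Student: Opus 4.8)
The plan is to invoke the left-induction criterion stated earlier (the specialization of \cite{HKRS} Proposition 2.1.4 to combinatorial model categories). We already know $\Flow$ is combinatorial (Gaucher's Proposition 18.1, quoted above) and we have just shown $\sSemiCat$ is bicomplete. The adjunction $(|-|\dashv Sing)$ is in place. So the entire content of the proposition reduces to verifying the single acyclicity condition
\[
\rlp\bigl(|-|^{-1}\cof_{\Flow}\bigr)\subset|-|^{-1}\cW_S.
\]
Once this is established, the theorem gives us the left-induced model structure on $\sSemiCat$ whose weak equivalences are $|-|^{-1}\cW_S$ and whose cofibrations are $|-|^{-1}\cof_{\Flow}$, and by definition of left-induction $|-|$ then preserves both cofibrations and trivial cofibrations, so $(|-|\dashv Sing)$ is automatically a Quillen pair (condition (1) in the definition of Quillen adjunction).

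The first step is to identify $|-|^{-1}\cof_{\Flow}$ concretely enough to get a handle on its right-lifting class. Here I would use that $\Flow$ is cofibrantly generated by $I^{gl}_+=\{\mathrm{Glob}(S^{n-1})\hookrightarrow\mathrm{Glob}(D^n)\}\cup\{\emptyset\hookrightarrow *,\ *\sqcup *\to *\}$, together with the fact that $|-|$ sends the proposed generating cofibrations $I_{Simp}=\{\mathrm{Glob}^\Delta(\partial\Delta^n\hookrightarrow\Delta^n)\}\cup\{\emptyset\hookrightarrow *,\ *\sqcup *\to *\}$ precisely onto (pushouts/retracts of) the generators of $\cof_{\Flow}$ — because geometric realization of simplicial sets sends $\partial\Delta^n\hookrightarrow\Delta^n$ to a cofibration of spaces weakly equivalent to $S^{n-1}\hookrightarrow D^n$, and $|\mathrm{Glob}^\Delta(A)| = \mathrm{Glob}(|A|)$ by construction of $|-|$ on $\sSemiCat$. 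This should let me argue that a map in $\sSemiCat$ has the right lifting property against $|-|^{-1}\cof_{\Flow}$ essentially iff it has the right lifting property against $I_{Simp}$; unravelling the latter, using the synchronization Lemma for the maps $\emptyset\hookrightarrow *$ and $*\sqcup *\to *$, a map $f:K\to L$ in $\rlp(I_{Simp})$ is a bijection on objects and its components $\bP^\Delta_{x,y}K\to\bP^\Delta_{f x, f y}L$ have the right lifting property against all $\partial\Delta^n\hookrightarrow\Delta^n$, i.e. are trivial fibrations of simplicial sets.

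The second step is then to check that such an $f$ maps to a weak S-homotopy equivalence under $|-|$: geometric realization of a trivial fibration of simplicial sets is a weak homotopy equivalence of spaces (indeed a trivial fibration in $\textbf{Top}$), so $\bP|f|:\bP|K|\to\bP|L|$ is a weak equivalence on each path space hence overall, and $|f|^0=f^0$ is a bijection, so $|f|\in\cW_S$ — exactly the required containment. I expect the main obstacle to be the first step: making the passage from "$\rlp$ of the full class $|-|^{-1}\cof_{\Flow}$" to "$\rlp$ of the small set $I_{Simp}$" airtight, since a priori $|-|^{-1}\cof_{\Flow}$ is much larger than $\cof(I_{Simp})$ and one must be careful that colimit-preservation of $|-|$ plus the explicit description $|\mathrm{Glob}^\Delta(A)|=\mathrm{Glob}(|A|)$ genuinely forces $\cof(I_{Simp}) = |-|^{-1}\cof_{\Flow}$ (or at least that the two classes have the same right-lifting class). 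A clean way to package this is: $|-|$ preserves colimits and sends $I_{Simp}$ into $\cof_{\Flow}$, so $\cof(I_{Simp})\subseteq|-|^{-1}\cof_{\Flow}$, and conversely any $g$ with $|g|$ a cofibration lifts against $\rlp(I_{Simp})$-maps because their $|-|$-images are (trivial) fibrations and $|g|$ is a cofibration — giving $g\in\cof(I_{Simp})$ after a retract argument via the small object argument in $\sSemiCat$ (which we may run since $\sSemiCat$ is bicomplete and locally presentable). With the two classes thus agreeing, the acyclicity computation of the second step completes the proof.
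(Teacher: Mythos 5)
Your core computation (the ``second step'') is exactly the paper's argument and is correct: the containment $I_{Simp}\subset|-|^{-1}\cof_\Flow$ gives $\rlp(|-|^{-1}\cof_\Flow)\subset\rlp(I_{Simp})$, so to verify the acyclicity condition it suffices to show $\rlp(I_{Simp})\subset|-|^{-1}\cW_S$; you then correctly unwind $\rlp(I_{Simp})$ via the synchronization lemma and lifting against $\text{Glob}^\Delta(\partial\Delta^n\hookrightarrow\Delta^n)$ to get ``bijection on objects plus trivial Kan fibration on path simplicial sets,'' and since realization sends trivial Kan fibrations to trivial Serre fibrations the realization is a trivial fibration of flows, hence in $\cW_S$.

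Where you go astray is in the ``first step.'' You identify as the main obstacle the need to pass from $\rlp(|-|^{-1}\cof_\Flow)$ to $\rlp(I_{Simp})$ and then try to establish the \emph{equality} $\cof(I_{Simp})=|-|^{-1}\cof_\Flow$. This is unnecessary: since $\rlp$ reverses inclusions, the containment $I_{Simp}\subset|-|^{-1}\cof_\Flow$ already gives $\rlp(|-|^{-1}\cof_\Flow)\subset\rlp(I_{Simp})$, which is the only direction the acyclicity condition needs. Worse, your proposed ``clean packaging'' of the converse inclusion is not sound as written: from the fact that $|g|$ is a cofibration of flows and $|h|$ is a trivial fibration of flows you get a lift of the realized square \emph{in} $\Flow$, but this does not produce a lift in $\sSemiCat$ (realization is neither full nor faithful), so you cannot conclude $g$ lifts against $h$ in $\sSemiCat$, and the retract argument does not get off the ground. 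Fortunately none of this is needed; drop the equality claim and your proof collapses to the paper's.
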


\begin{proof}
Define $\cW_{\sSemiCat}$ to consist of all those morphisms that become weak equivalences under realization.  Recall that $\Flow$ is a combinatorial model category, and hence the model structure on $\Flow$ may be left-induced along $(|-|\dashv Sing):\sSemiCat\rightleftarrows\Flow$ if and only if $|\rlp(|-|^{-1}(\cof_\Flow))|\subset\cW_\Flow$.  Note that by definition, for all $n$, $|\text{Glob}^\Delta(\partial\Delta^n\hookrightarrow\Delta^n)|\cong\text{Glob}(\partial D^n\hookrightarrow D^n)$.  Now, this implies that $I_{Simp}\subset|-|^{-1}(\cof_{\Flow})$, which in turn yields that $\rlp(I_{Simp})\supset\rlp(|-|^{-1}(\cof_{\Flow}))$.  Thus, if we can show that $|rlp(I_{Simp})|\subset\cW_{\Flow}$, then we are done.  Suppose that we have a map $(f:K\to L)\in\rlp(I_{Simp})$.  Then, first of all, because $f$ has the right lifting property with respect to $\{\emptyset\hookrightarrow*, *\sqcup*\to*\}$, it is a synchronized morphism, or in other words, $\text{ob}(f):\text{ob}(K)\to\text{ob}(L)$ is a bijection of sets.  Furthermore, for all $n$, and for all commutative squares 
\[
\begin{tikzcd}
  \text{Glob}^\Delta(\partial\Delta^n) \arrow[d, hook]\arrow[r]& K\arrow[d, "f"]\\
  \text{Glob}^\Delta(\Delta^n)\arrow[r]\arrow[ru, dashed]& L
\end{tikzcd},
\] the dashed arrow exists.  Note that this last condition holds if and only if $\bP^\Delta f:\bP^\Delta K\to\bP^\Delta L$ is a trivial Kan fibration of simplicial sets.  However, this implies that $|\bP^\Delta f|=\bP|f|$ is a trivial Serre fibration of topological spaces.  This, plus the fact that $|f|^0: |K|^0\to|L|^0$ is a bijection of sets, yields that $|f|$ is a trivial fibration of flows, and hence that $|f|\in\cW_\Flow$.  Thus, we have shown that $|\rlp(I_{Simp})|\subset\cW_\Flow$, and hence that the left-induced model structure on $\sSemiCat$ exists.
\end{proof}

Furthermore this Quillen pair is actually a Quillen equivalence.

\begin{thm}
The Quillen adjunction $(|-|\dashv Sing):\sSemiCat\rightleftarrows\Flow$ is a Quillen equivalence.
\end{thm}

\begin{proof}

To start with, note that since our model structure on $\sSemiCat$ is left induced, the left-adjoint realization functor creates weak equivalences.  This implies that we only need to show that for every fibrant $X\in\Flow$ (in other words, any flow), the counit of the adjunction $\epsilon: |Sing(X)|\to X$ is a weak equivalence.  However, this holds due to the same result for the Quillen adjunction between simplicial sets and topological spaces.
\end{proof}

Thus, we actually have a nice combinatorial model for Flows, much akin to that for spaces.  Furthermore, we have the following results.

\begin{thm}
There is a combinatorial model structure on $\sSemiCat$ which has as its generating set of cofibrations $I_{Simp}$ Quillen equivalent to the structure constructed above via the identity.
\end{thm}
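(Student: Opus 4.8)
The plan is to produce the model structure on $\sSemiCat$ with generating cofibrations $I_{Simp}$ and weak equivalences $\cW_{\sSemiCat}:=|-|^{-1}(\cW_{\Flow})$ (the \emph{same} class of weak equivalences that the left-induced structure of the previous proposition has) by verifying the hypotheses of Jeff Smith's Theorem, and then to observe that the identity functor between this structure and the left-induced one, call the latter $\sSemiCat_{LI}$, is a Quillen equivalence for essentially formal reasons. Throughout I will lean on two facts already available: that $\sSemiCat$ is bicomplete (proved above), and that $|-|:\sSemiCat\to\Flow$ is a left adjoint, hence colimit-preserving, with $|\text{Glob}^\Delta(\partial\Delta^n\hookrightarrow\Delta^n)|\cong\text{Glob}(S^{n-1}\hookrightarrow D^n)$ and $|-|$ acting as the identity on the ``object'' generators, so that $|I_{Simp}|\cong I^{gl}_+$ as sets of maps.

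For the application of Jeff Smith's Theorem I would check the hypotheses one at a time. First, $\sSemiCat$ is locally presentable: accessibility follows by the same analysis that established bicompleteness, presenting $\sSemiCat$ as the category of models of a limit sketch (equivalently, as algebras for an accessible monad on $\textbf{sSet}$-enriched graphs), exactly as for $\textbf{sSet}$-enriched categories, and we have already shown bicompleteness. Second, since both $\sSemiCat$ and $\Flow$ are then locally presentable and $|-|$ is a left adjoint, $|-|$ is accessible; as $\Flow$ is combinatorial, $\textbf{Arr}_{\cW_{\Flow}}(\Flow)$ is an accessibly embedded accessible full subcategory of $\textbf{Arr}(\Flow)$, and $\textbf{Arr}_{\cW_{\sSemiCat}}(\sSemiCat)$ is its pseudo-pullback along $\textbf{Arr}(|-|)$, hence again accessibly embedded and accessible by the standard closure properties of accessible categories. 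Third, $\cW_{\sSemiCat}$ satisfies $2$-out-of-$3$, immediately from functoriality of $|-|$ and $2$-out-of-$3$ in $\Flow$. Fourth, $\text{inj}(I_{Simp})=\rlp(I_{Simp})\subset\cW_{\sSemiCat}$: this was shown in the proof of the left-induction proposition above, since a map in $\rlp(I_{Simp})$ is synchronized with $\bP^\Delta$ a trivial Kan fibration, hence realizes to a trivial fibration of flows, hence to a weak S-homotopy equivalence.

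The last hypothesis — and the point requiring the most care — is that $\cof(I_{Simp})\cap\cW_{\sSemiCat}$ is closed under pushout and transfinite composition. Since $\sSemiCat$ is locally presentable, $\cof(I_{Simp})$ consists of retracts of relative $I_{Simp}$-cell complexes, and it is already closed under pushout and transfinite composition; so it suffices to see that $\cW_{\sSemiCat}$ is preserved when forming pushouts and transfinite composites of maps already in $\cof(I_{Simp})\cap\cW_{\sSemiCat}$. Here the interplay with $\Flow$ is essential: because $|-|$ preserves these colimits and $|I_{Simp}|\cong I^{gl}_+$, it carries $\cell(I_{Simp})$ into $\cell(I^{gl}_+)$ and (being a functor) retracts to retracts, so $|-|$ sends $\cof(I_{Simp})$ into $\cof(I^{gl}_+)=\cof_{\Flow}$. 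Thus any $f\in\cof(I_{Simp})\cap\cW_{\sSemiCat}$ has $|f|\in\cof_{\Flow}\cap\cW_{\Flow}$, a trivial cofibration of flows; since trivial cofibrations of $\Flow$ are closed under pushout and transfinite composition and $|-|$ commutes with both operations, any pushout or transfinite composite of such $f$ still maps under $|-|$ to a trivial cofibration of flows, in particular a weak equivalence, so it lies in $\cW_{\sSemiCat}$. Jeff Smith's Theorem then yields the desired combinatorial model structure $\sSemiCat_{I_{Simp}}$ with cofibrations $\cof(I_{Simp})$, hence generating cofibrations $I_{Simp}$, and weak equivalences $\cW_{\sSemiCat}$.

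Finally I would compare $\sSemiCat_{I_{Simp}}$ with $\sSemiCat_{LI}$ through the identity adjunction $(\id\dashv\id)$. As noted above $|-|$ sends $\cof(I_{Simp})$ into $\cof_{\Flow}$, so $\cof(I_{Simp})\subset|-|^{-1}(\cof_{\Flow})$, the cofibrations of $\sSemiCat_{LI}$; intersecting with the common weak-equivalence class $\cW_{\sSemiCat}$ gives the analogous inclusion of trivial cofibrations. Hence $\id:\sSemiCat_{I_{Simp}}\to\sSemiCat_{LI}$ preserves cofibrations and trivial cofibrations, so it is a Quillen pair. Its left adjoint is the identity, which trivially creates weak equivalences, so by the Theorem above on Quillen pairs whose left adjoint creates weak equivalences, it is a Quillen equivalence provided the counit is a weak equivalence on every fibrant object; but the counit of $(\id\dashv\id)$ is the identity natural transformation, a weak equivalence at every object. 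Therefore the identity is a Quillen equivalence, which is the assertion.
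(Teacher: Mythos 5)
Your proposal is correct and follows essentially the same route as the paper: invoke Jeff Smith's theorem with $I_{Simp}$ and $\cW_{\sSemiCat}=|-|^{-1}(\cW_\Flow)$, then show the identity adjunction between the two model structures on $\sSemiCat$ is a Quillen equivalence. Your verification of the Smith hypotheses is actually more careful than the paper's: the paper writes that ``$\cW_{\sSemiCat}$ and $\cof(I_{Simp})$ are both closed under pushout and transfinite composition,'' which as literally stated is too strong (a class of weak equivalences is generally \emph{not} closed under pushout); the hypothesis that actually needs checking is closure of the intersection $\cof(I_{Simp})\cap\cW_{\sSemiCat}$, and your argument --- using that $|-|$ preserves the relevant colimits, that $|I_{Simp}|\cong I^{gl}_+$, hence $|\cof(I_{Simp})|\subset\cof_\Flow$, so that the image of an intersection map is a trivial cofibration of flows and these are closed under pushout and transfinite composition --- supplies exactly the missing justification. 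Your use of the counit criterion (the identity left adjoint creates weak equivalences, and the counit is literally the identity) is dual to the paper's invocation of the unit, but equally valid, and you also spell out the accessibility hypotheses of Smith's theorem which the paper leaves tacit.
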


\begin{proof}
First, we briefly show using Jeff Smith's theorem that we have a valid combinatorial model category structure.  Note that $\rlp(I_{Simp})\subset\cW_{\sSemiCat}$, that $\cW_{\sSemiCat}$ satisfies 2-out-of-3, and that $\cW_{\sSemiCat}$ and $\cof(I_{Simp})$ are both closed under pushout and transfinite composition.  Thus, there is a valid combinatorial model category structure on $\sSemiCat$ with $I_{Simp}$ as its generating cofibrations, and $\cW_{\sSemiCat}$ as its weak equivalences.

We now prove that it is Quillen equivalent to the model category structure from above.  Note first that by definition the identity functor is self-adjoint.  Now, observe that since $I_{Simp}\subset|-|^{-1}(\cof_{\Flow})$, one has that $\rlp(|-|^{-1}(\cof_{\Flow}))\subset\rlp(I_{Simp})$ and furthermore that $\cof(I_{Simp})\subset|-|^{-1}(\cof_{\Flow})$.  Thus, by definition, taking the identity as a left adjoint, it takes cofibrations into cofibrations, and taking the identity as a right adjoint, it takes fibrations into fibrations.  Thus, the identity functor on $\sSemiCat$ is a Quillen self-adjunction between the two model structures we have discussed thus far.  Since these two model structures have precisely the same weak equivalences, it is automatic that the adjunction unit between cofibrant objects is a weak equivalence.  Indeed, it is the identity.  Thus, the identity functor forms a Quillen equivalence with itself between these two model structures on $\sSemiCat$.
\end{proof}

\begin{rem}
In fact, this same technique yields a proof for demonstrating the equivalence of simplicial categories and topological categories found in \cite{Ili}.  The one difference is in showing that $|\rlp(|-|^{-1}\cof_{\Cat(\textbf{Top})})|\subset\cW_{\Cat(\textbf{Top})}$, but this is a relatively simple alteration.
\end{rem}

\subsection{A Few Properties of Simplicial Semicategories}

As was hinted at in the previous section, simplicial semicategories admit a number of definitions analogous to those found in $\Flow$, for example globes, path simplicial sets, and so on.  We will simply use the same notation in what follows.

Note that $\sSemiCat$ obeys the same formal properties as $\Flow$.  That said, there are many situations in which simplicial semicategories are particularly well-behaved.  For example, they have an extremely natural notion of simplicially enriched Hom-set, and many of the same theorems admit a shorter proof, with several conditions being removed as opposed to their counterparts in flows.  In particular, see the proofs of proposition 3.25 and theorems 3.26 through 3.28 below.

\begin{defn}
Given a simplicial set $S$ and $K\in\sSemiCat$, we define $\{S, K\}\in\sSemiCat$ as follows:
\begin{itemize}
    \item $\{S, K\}^0:=K^0$
    \item For any two $\alpha,\beta\in K^0$, $\bP^\Delta_{\alpha\beta}\{S, K\}:=\textbf{sSet}(S, \bP^\Delta_{\alpha\beta}K)$.
    \item for any three $\alpha, \beta, \gamma\in K^0$, the composition law is the composite 
    \[
    *:\bP^\Delta_{\alpha\beta}\{S, K\}\times\bP^\Delta_{\beta\gamma}\{S, K\}\cong\textbf{sSet}(S,\bP^\Delta_{\alpha\beta}K\times\bP^\Delta_{\beta\gamma} K)\to\textbf{sSet}(S,\bP^\Delta_{\alpha\gamma} K)=\bP^\Delta_{\alpha\gamma}\{S, K\}
    \]
\end{itemize}
\end{defn}

\begin{thm}
The assignment $\{-, -\}:\textbf{sSet}\times\sSemiCat\to\sSemiCat$ is contravariantly functorial in the first argument and covariantly functorial in the second argument.  Furthermore, one has the natural isomorphisms $\{S, \varprojlim_iK_i\}\cong\varprojlim_i\{S, K_i\}$ and $\{\varinjlim_iS_i, K\}\cong\varprojlim_i\{S_i, K\}$.  Finally, given any two $S, T\in\textbf{sSet}$, one has for all $K\in\sSemiCat$ that $\{S\times T, K\}\cong\{S, \{T, K\}\}$.
\end{thm}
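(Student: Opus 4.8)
The plan is to reduce every assertion to the corresponding (already known) statement about the bifunctor $\textbf{sSet}(-,-)$, contravariant in the first variable and covariant in the second, performed object-by-object on path simplicial sets, and then to verify in each case that the resulting family of maps on path objects is compatible with source, target, and the ternary composition law, hence assembles into a morphism (or isomorphism) of simplicial semicategories. The one recurring point to keep track of is that $\{S,K\}$ always has object set exactly $K^0$, so every object-level comparison is the identity (or the obvious bijection) and all the genuine content lives on path objects.

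For functoriality: given $f:S'\to S$ in $\textbf{sSet}$ and $g:K\to L$ in $\sSemiCat$, one defines $\{f,g\}:\{S,K\}\to\{S',L\}$ to be $g^0$ on objects and, on path objects, the composite $\textbf{sSet}(S,\bP^\Delta_{\alpha\beta}K)\to\textbf{sSet}(S',\bP^\Delta_{g^0\alpha,g^0\beta}L)$ induced by $f$ and $\bP^\Delta g$. Functoriality in each variable separately is then immediate from that of $\textbf{sSet}(-,-)$; the only thing to check is that $\{f,g\}$ respects composition, which holds because $\bP^\Delta g$ commutes with $*$ (it is a semifunctor) and because the canonical comparison $\textbf{sSet}(S,A\times B)\cong\textbf{sSet}(S,A)\times\textbf{sSet}(S,B)$ is natural in $S$, $A$, and $B$. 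This is pure diagram-chasing.

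For the limit in the second argument: using the explicit construction of limits in $\sSemiCat$ established in the proof that $\sSemiCat$ is bicomplete (same object set, path objects formed levelwise as limits, composition the limit of the componentwise compositions), together with the fact that $\textbf{sSet}(S,-)$ is a right adjoint and so preserves limits, one obtains for each pair $\alpha,\beta$ a natural isomorphism $\textbf{sSet}(S,\varprojlim_i\bP^\Delta_{\alpha_i\beta_i}K_i)\cong\varprojlim_i\textbf{sSet}(S,\bP^\Delta_{\alpha_i\beta_i}K_i)$; these assemble into an isomorphism of simplicial semicategories because limits commute with the finite products appearing in the composition laws. For the colimit in the first argument the same argument runs with the variance reversed, using $\textbf{sSet}(\varinjlim_iS_i,A)\cong\varprojlim_i\textbf{sSet}(S_i,A)$ for each simplicial set $A$; here one simply observes that the diagram $i\mapsto\{S_i,K\}$ has all object sets equal to $K^0$ with identity transition maps, so its limit again has object set $K^0$ and path objects $\varprojlim_i\textbf{sSet}(S_i,\bP^\Delta_{\alpha\beta}K)$, matching $\{\varinjlim_iS_i,K\}$.

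For the exponential law: both $\{S\times T,K\}$ and $\{S,\{T,K\}\}$ have object set $K^0$, and on path objects cartesian closedness of $\textbf{sSet}$ gives natural isomorphisms $\textbf{sSet}(S\times T,\bP^\Delta_{\alpha\beta}K)\cong\textbf{sSet}(S,\textbf{sSet}(T,\bP^\Delta_{\alpha\beta}K))=\bP^\Delta_{\alpha\beta}\{S,\{T,K\}\}$. It then remains to check that these intertwine the two composition laws; since both composition laws are obtained from the composition of $K$ by applying the respective internal-hom functor together with the canonical product-preservation maps, this reduces to naturality of the $\textbf{sSet}$ adjunction isomorphism and its compatibility with finite products — again a formal verification. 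The main (and essentially only) obstacle throughout is this bookkeeping: making the compatibility-with-composition checks precise by writing out the relevant naturality squares and invoking the corresponding properties of $\textbf{sSet}$; there is no conceptual difficulty.
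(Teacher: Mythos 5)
Your proposal is correct and follows essentially the same approach as the paper: the paper's proof simply observes that functoriality is clear, that compatibility with limits/colimits follows from the corresponding facts for the internal hom in $\textbf{sSet}$, and that the exponential law follows from cartesian closedness of $\textbf{sSet}$ — exactly the reductions you carry out, with the additional (correct and worthwhile) bookkeeping on object sets and compatibility with the composition laws spelled out.
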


\begin{proof}
The functoriality is clear, the behavior with respect to limits in both arguments follows from the behavior of internal homs in $\textbf{sSet}$ with respect to limits in both arguments, and the last condition follows from the adjunction in $\textbf{sSet}$ between internal hom and cartesian product.
\end{proof}

This pairing actually yields the following theorem:

\begin{prop}
$\sSemiCat$ is simplicially enriched, and the assignment $\sSemiCat^{op}\times\sSemiCat\to\textbf{sSet}$ given by $(K, L)\mapsto\sSemiCat(K, L)$ is functorial.
\end{prop}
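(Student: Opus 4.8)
The statement has two parts: (1) that $\sSemiCat$ is simplicially enriched, and (2) that the resulting hom-simplicial-set assignment $\sSemiCat^{op}\times\sSemiCat\to\textbf{sSet}$ is functorial. The natural strategy is to produce the simplicial enrichment by defining, for $K,L\in\sSemiCat$, the simplicial set $\sSemiCat(K,L)$ via $\sSemiCat(K,L)_n:=\Hom_{\sSemiCat}(K\times\Delta^n,\text{-ish})$ — but more cleanly, using the cotensor $\{-,-\}$ already constructed: set $\sSemiCat(K,L)_n:=\Hom_{\sSemiCat}(K,\{\Delta^n,L\})$, with face and degeneracy maps induced contravariantly from the cosimplicial structure of $[n]\mapsto\Delta^n$ through the first slot of $\{-,-\}$. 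Simpliciality of this assignment is then immediate from the contravariant functoriality of $\{-,-\}$ in its first argument (from the preceding theorem).

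First I would verify this really defines a simplicial enrichment, i.e. produce composition maps $\sSemiCat(L,M)\times\sSemiCat(K,L)\to\sSemiCat(K,M)$ that are associative and — wait, since there are no identities in a semicategory, one might worry, but the enrichment is in $\textbf{sSet}$ over ordinary (identity-bearing) hom-sets, so the enriched category axioms we need are exactly those of a $\textbf{sSet}$-enriched category: associative composition and two-sided unit. The unit comes from $\id_K\in\Hom_{\sSemiCat}(K,K)=\sSemiCat(K,K)_0$; one must check it is a $0$-simplex that is a unit for the simplicial composition, which follows because at simplicial level the composition is built levelwise from ordinary composition in $\sSemiCat$. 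The key computation for composition is: given $f\colon K\to\{\Delta^n,L\}$ and $g\colon L\to\{\Delta^n,M\}$, produce $K\to\{\Delta^n,M\}$; this uses the diagonal $\Delta^n\to\Delta^n\times\Delta^n$ together with the exponential law $\{S\times T,K\}\cong\{S,\{T,K\}\}$ from the previous theorem, exactly as one builds composition in the simplicial enrichment of $\textbf{sSet}$ itself. I would spell out that $g$ induces $\{\Delta^n,L\}\to\{\Delta^n,\{\Delta^n,M\}\}\cong\{\Delta^n\times\Delta^n,M\}$, then restrict along the diagonal to land in $\{\Delta^n,M\}$, and precompose with $f$.

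The functoriality claim — that $(K,L)\mapsto\sSemiCat(K,L)$ is a functor $\sSemiCat^{op}\times\sSemiCat\to\textbf{sSet}$ — I would deduce from the functoriality of $\{-,-\}$: covariance in $L$ is clear since $\{\Delta^n,-\}$ is covariant and $\Hom_{\sSemiCat}(K,-)$ is covariant, while contravariance in $K$ is just $\Hom_{\sSemiCat}(-,\{\Delta^n,L\})$; compatibility with face and degeneracy maps is automatic because every structure map in sight is natural. One should also record that $\sSemiCat(K,L)_0=\Hom_{\sSemiCat}(K,\{\Delta^0,L\})\cong\Hom_{\sSemiCat}(K,L)$, so the enrichment is genuinely over the underlying hom-sets, consistent with the Remark's notational convention.

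The main obstacle, such as it is, will be the bookkeeping for the composition map: making sure the diagonal-restriction construction is strictly associative (not just up to coherent isomorphism) and that the $0$-simplex identities genuinely serve as units at every simplicial level. This is routine but fiddly, and is the one place where a careless argument could go wrong; everything else is a direct consequence of the exponential and limit-preservation properties of $\{-,-\}$ established in the preceding theorem, mirroring the standard proof that a category cotensored over $\textbf{sSet}$ is automatically $\textbf{sSet}$-enriched.
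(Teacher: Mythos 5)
Your proposal is correct and essentially reproduces the paper's argument: the paper also sets $\sSemiCat(K,L)_n=\Hom_{\sSemiCat}(K,\{\Delta^n,L\})$, uses the contravariant functoriality of $\{-,-\}$ in the first slot for the simplicial structure maps, and defines composition by applying $\{\Delta^n,-\}$ to the second map, passing through $\{\Delta^n,\{\Delta^n,M\}\}\cong\{\Delta^n\times\Delta^n,M\}$, and restricting along the diagonal $\Delta^n\to\Delta^n\times\Delta^n$ --- which the paper packages as Kleisli composition for the monad $\{\Delta^n,-\}$ (unit from $\Delta^n\to\Delta^0$, multiplication from the diagonal), exactly the construction you describe without the explicit monad label.
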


\begin{proof}
We adapt Joyal's discussion of the enrichement of simplicial categories found in \cite{Joy} to the setting of simplicial semicategories.  We first show that for any $n\in\N$, the functor $\{\Delta^n, -\}:\sSemiCat\to\sSemiCat$ described above defines a monad.  To show this, we will employ the contravariant functoriality of the first argument of $\{-,-\}$.  Consider the evident morphism $\Delta^n\to\Delta^0$.  For any $K\in\sSemiCat$, this provides us with a unit map $K\to\{\Delta^n, K\}$ upon noting the natural isomorphism $\{\Delta^0, K\}\cong K$.  Our multiplication map arises from the diagonal $\Delta^n\hookrightarrow\Delta^n\times\Delta^n$ via the composition
\[
\{\Delta^n, \{\Delta^n, K\}\}\to\{\Delta^n\times\Delta^n, K\}\to\{\Delta^n, K\}.
\]
Now for any $K, L\in\sSemiCat$, let us define $\sSemiCat(K, L)$ via the assignment $\sSemiCat(K, L)_n=\Hom_\sSemiCat(K, \{\Delta^n, L\})$.  We can define composition for $K, L, M\in\sSemiCat$ as a Kleisli multiplication

\[
\sSemiCat(K, L)_n\times\sSemiCat(L, M)_n\to\sSemiCat(K, M)_n.
\]
Explicitly, this is the following composition, where we omit Hom subscripts for brevity:
\[
\begin{tikzcd}[column sep=0]
  \sSemiCat(K, L)_n\times\sSemiCat(L, M)_n \arrow[equal]{r}
\arrow[d, phantom, ""{coordinate, name=Z}]
& \Hom(K, \{\Delta^n, L\})\times\Hom(L, \{\Delta^n, M\}) 
\arrow[dl,
rounded corners,
to path={ -- ([xshift=2ex]\tikztostart.east)
|- (Z) [near end]\tikztonodes
-| ([xshift=-2ex]\tikztotarget.west) -- (\tikztotarget)}] \\
  \Hom(K, \{\Delta^n, L\})\times\Hom(\{\Delta^n, L\}, \{\Delta^n,\{\Delta^n, M\}\}) \arrow[r]
\arrow[d, phantom, ""{coordinate, name=W}]
& \Hom(K, \{\Delta^n,\{\Delta^n, M\}\}) \arrow[dl,
rounded corners,
to path={ -- ([xshift=2ex]\tikztostart.east)
|- (W) [near end]\tikztonodes
-| ([xshift=-2ex]\tikztotarget.west) -- (\tikztotarget)}] \\
\Hom(K, \{\Delta^n, M\})\arrow[equal]{r}&\sSemiCat(K, M)_n.
&\,
\end{tikzcd}
\]
Taken together, this determines the simplicial set $\sSemiCat(K, L)$, and the composition law $\sSemiCat(K, L)\times\sSemiCat(L, M)\to\sSemiCat(K, M)$.  Given this definition, functoriality is immediate.
\end{proof}

Moreover, one has the following theorem.

\begin{thm}
The functor $\{S, -\}:\sSemiCat\to\sSemiCat$ has a left adjoint denoted by $S\boxtimes(-)$.  This defines a bifunctor $(-)\boxtimes(-)$ Furthermore, this has the following properties
\begin{itemize}
    \item There is a natural isomorphism of simplicial semicategories given by 
    \[
    S\boxtimes(\varinjlim_iK_i)\cong\varinjlim(S\boxtimes K_i).
    \]
    \item $\Delta^0\boxtimes K\cong K$.
    \item $S\boxtimes$Glob$(T)\cong$Glob$(S\times T)$.
    \item There is a natural bijection $(S\boxtimes K)^0\cong K^0$.
    \item $(S\times T)\boxtimes K\cong S\boxtimes(T\boxtimes K)$.
\end{itemize}
\end{thm}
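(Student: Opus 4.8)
The plan is to produce the left adjoint $S \boxtimes (-)$ via the adjoint functor theorem (or directly by a presentation), and then to verify the five listed properties by pushing each one through the defining adjunction. For the existence of the left adjoint, I would observe that $\sSemiCat$ is bicomplete (proved above) and locally presentable, and that $\{S,-\} : \sSemiCat \to \sSemiCat$ preserves limits — this is part of the previous theorem, which gives $\{S, \varprojlim_i K_i\} \cong \varprojlim_i \{S, K_i\}$ — and is accessible (it is built from the internal hom of $\textbf{sSet}$, which is accessible, applied levelwise on path simplicial sets). By the adjoint functor theorem for locally presentable categories, $\{S,-\}$ therefore admits a left adjoint, which we name $S \boxtimes (-)$. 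Functoriality of $(-)\boxtimes(-)$ as a bifunctor in both variables follows formally from the contravariant functoriality of $\{-,-\}$ in its first argument together with uniqueness of adjoints (a morphism $S \to T$ in $\textbf{sSet}$ induces a natural transformation $\{T,-\} \to \{S,-\}$, hence a natural transformation $S \boxtimes (-) \to T \boxtimes (-)$ on the left adjoints).

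The five properties are then each an exercise in adjunction-chasing. For the colimit property $S \boxtimes (\varinjlim_i K_i) \cong \varinjlim_i (S \boxtimes K_i)$: left adjoints preserve all colimits, so this is immediate. For $\Delta^0 \boxtimes K \cong K$: we have $\{\Delta^0, K\} \cong K$ naturally (noted in the proof of the enrichment proposition), so $\{\Delta^0,-\}$ is (isomorphic to) the identity functor, and its left adjoint is therefore the identity as well. For $S \boxtimes \mathrm{Glob}(T) \cong \mathrm{Glob}(S \times T)$: I would check the universal property directly, using that $\mathrm{Glob}^\Delta : \textbf{sSet} \to \sSemiCat$ is itself a left adjoint (to the functor sending $K$ to $\bP^\Delta_{0,1} K$ for a chosen pair of objects, or more precisely using that maps out of $\mathrm{Glob}^\Delta(U)$ into an arbitrary simplicial semicategory are governed by $\textbf{sSet}$-maps out of $U$), so that
\[
\Hom_{\sSemiCat}(S \boxtimes \mathrm{Glob}^\Delta(T), L) \cong \Hom_{\sSemiCat}(\mathrm{Glob}^\Delta(T), \{S,L\}) \cong \coprod_{\alpha,\beta \in L^0} \textbf{sSet}(T, \textbf{sSet}(S, \bP^\Delta_{\alpha\beta} L)),
\]
and then using the tensor-hom adjunction in $\textbf{sSet}$ to rewrite the right-hand side as $\coprod_{\alpha,\beta}\textbf{sSet}(S \times T, \bP^\Delta_{\alpha\beta} L) \cong \Hom_{\sSemiCat}(\mathrm{Glob}^\Delta(S \times T), L)$; Yoneda then gives the isomorphism. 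For the object-set statement $(S \boxtimes K)^0 \cong K^0$: since the object-set functor $(-)^0 : \sSemiCat \to \textbf{Set}$ is a right adjoint (shown in the bicompleteness proof, via the free functor $F$) and $\{S,-\}$ is constant on objects (i.e. $\{S,-\}^0 = (-)^0$), one gets $\Hom_{\textbf{Set}}((S\boxtimes K)^0, A) \cong \Hom_{\sSemiCat}(S \boxtimes K, FA) \cong \Hom_{\sSemiCat}(K, \{S, FA\}) \cong \Hom_{\sSemiCat}(K, FA) \cong \Hom_{\textbf{Set}}(K^0, A)$, where the middle isomorphism uses $\{S, FA\} \cong FA$ since $FA$ has empty path simplicial sets; Yoneda again finishes. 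Finally, $(S \times T) \boxtimes K \cong S \boxtimes (T \boxtimes K)$ is dual to the already-established isomorphism $\{S \times T, K\} \cong \{S, \{T,K\}\}$: taking left adjoints of both sides of the natural isomorphism of functors $\{S \times T, -\} \cong \{S,-\} \circ \{T,-\}$ yields $(S \times T) \boxtimes (-) \cong (T \boxtimes (-)) $ followed by $(S \boxtimes (-))$, i.e. $S \boxtimes (T \boxtimes (-))$, by uniqueness of adjoints and the fact that the composite of the left adjoints is left adjoint to the composite in the reverse order.

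The main obstacle I anticipate is the existence of the left adjoint itself — specifically, verifying the accessibility hypothesis of the adjoint functor theorem cleanly, or, if one prefers an explicit construction, writing down $S \boxtimes K$ by generators and relations (taking the same object set as $K$ and freely adjoining, for each simplex of $S$ and each path simplex of $K$, a path simplex of $S \boxtimes K$, modulo the compositions and face/degeneracy relations) and then checking that the resulting object genuinely corepresents $L \mapsto \Hom_{\sSemiCat}(K, \{S,L\})$. Once existence and the bifunctoriality are in hand, properties one through five are all soft consequences of uniqueness of adjoints and the corresponding facts about $\{-,-\}$ and $\textbf{sSet}$, so no property after the first should present real difficulty.
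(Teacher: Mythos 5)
Your overall strategy matches the paper's in spirit (produce the left adjoint by an adjoint-functor-theorem argument, then derive each bullet by adjunction-chasing), but you diverge on the existence step. The paper verifies Freyd's solution-set condition directly, with an explicit cardinality bound of the same form it used to prove cocompleteness of $\sSemiCat$ (itself modeled on Gaucher's argument for $\Flow$). You instead invoke the adjoint functor theorem for locally presentable categories, which is cleaner but rests on accessibility of $\{S,-\}$ — you rightly flag this as the main obstacle, and the paper never actually establishes local presentability of $\sSemiCat$ (despite later invoking Smith's theorem). The solution-set route is the more self-contained of the two in this paper's context. For the remaining bullets your arguments are parallel to or tighter than the paper's: ``left adjoints preserve colimits'' replaces a Hom-set chase, uniqueness of adjoints handles $\Delta^0$ and associativity cleanly, and the Glob computation is essentially the paper's but spelled out.

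There is, however, a genuine error in your object-set argument. You write $\Hom_{\textbf{Set}}((S\boxtimes K)^0, A)\cong\Hom_{\sSemiCat}(S\boxtimes K, FA)$, but the adjunction the paper established is $F\dashv (-)^0$, i.e.\ $\Hom_{\textbf{Set}}(A, K^0)\cong\Hom_{\sSemiCat}(FA, K)$; that represents $\Hom_{\textbf{Set}}(A,(-)^0)$, not $\Hom_{\textbf{Set}}((-)^0, A)$, so your first isomorphism does not follow. The fix is to use the \emph{right} adjoint of $(-)^0$, namely the chaotic (indiscrete) functor $C:\textbf{Set}\to\sSemiCat$ assigning to $A$ the simplicial semicategory with object set $A$ and every $\bP^\Delta_{\alpha\beta}$ equal to $*$ (this right adjoint exists since $(-)^0$ preserves colimits). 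Then
\[
\Hom_{\textbf{Set}}((S\boxtimes K)^0, A)\cong\Hom_{\sSemiCat}(S\boxtimes K, CA)\cong\Hom_{\sSemiCat}(K,\{S, CA\})\cong\Hom_{\sSemiCat}(K, CA)\cong\Hom_{\textbf{Set}}(K^0, A),
\]
where $\{S, CA\}\cong CA$ holds for \emph{all} $S$ (including $S=\emptyset$) because $\textbf{sSet}(S,*)\cong *$ always. This repair also avoids the edge case your version would hit: $\{S, FA\}\cong FA$ fails for $S=\emptyset$, since $\textbf{sSet}(\emptyset,\emptyset)\cong *$ rather than $\emptyset$. (For what it's worth, the paper's own write-up of this bullet — showing $S\boxtimes *\cong *$ and using colimit preservation to get $S\boxtimes K^0\cong K^0$ — establishes something about $S\boxtimes(K^0)$ rather than $(S\boxtimes K)^0$ and so also requires an argument of the above form to be complete; your right-adjoint approach, once corrected, is the more convincing of the two.)
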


\begin{proof}
We begin by proving the existence of a left adjoint at all.  First, note that for any simplicial set $S$, one has that $\{S, -\}$ commutes with small limits.  Thus, we need only verify the solution set condition.  Begin by choosing a simplicial semicategory $K$.  As in the proof that $\sSemiCat$ is cocomplete, we start by analyzing the class of solutions $f:K\to\{S, L\}$ for all $L\in\sSemiCat$ and all $f\in\Hom_\sSemiCat(K, \{S, L\})$.  Now, consider the cardinal $\kappa=\#S\cdot(\#K_0+\#\bP^\Delta K)$.  By definition, if $K$ is nonempty,  $\kappa\ge\aleph_0$, because the underlying set of a simplicial set is nonempty in a countably infinite number of degrees.  Now, choose a representative of every isomorphism class of simplicial semicategories whose object and morphism simplicial sets have underlying set cardinality less than or equal to $2^\kappa$, and denote the set of all these representatives by $\cA$.  Now, we verify that $\bigcup_{A\in\cA}\Hom_\sSemiCat(K, \{S, A\})$ form a set of solutions.  Consider an arbitrary $f:K\to\{S, M\}$ for some simplicial semicategory $M$.  Now, we let $N\subset M$ be the subsimplicial semicategory generated by elements of the form $f(K)(S)$.  We know by definition that $\#N\le2^\kappa$.  Thus, in particular, $\#\{S, N\}\le\#S\cdot2^\kappa=2^\kappa$ (given that $\kappa\ge\aleph_0$), and hence, $\{S, N\}\cong\{S,A\}$ for some $A\in\cA$.  Thus, our initial morphism factors through our solution set, and we have a well-defined left adjoint, which we denote by $S\boxtimes(-)$.
\begin{itemize}
    \item Note that for all $L\in\sSemiCat$, one has that 
    \begin{align*}
    \Hom_\sSemiCat(S\boxtimes(\varinjlim_iK_i),L)&\cong\Hom_\sSemiCat(\varinjlim_iK_i,\{S, L\})\\
    &\cong\varprojlim_i\Hom_\sSemiCat(K_i,\{S, L\})\\
    &\cong\varprojlim_i\Hom_\sSemiCat(S\boxtimes K_i,L)\\
    &\cong\Hom_\sSemiCat(\varinjlim_i(S\boxtimes K_i),L),
    \end{align*}
    which implies a natural isomorphism.
    \item Similarly to the above, one has
    \[
    \Hom_\sSemiCat(\Delta^0\boxtimes K, L)\cong\Hom_\sSemiCat(K, \{\Delta^0, L\})\cong\Hom_\sSemiCat(K, L).
    \]
    \item This arises from the adjunction $\Hom_{\textbf{sSet}}(K\times L, M)\cong\Hom_{\textbf{sSet}}(K, \textbf{sSet}(L, M))$ on the level of simplicial sets.
    \item  Denoting by $*$ the simplicial semicategory with one object and no morphisms, note that by definition, $K^0\cong\Hom_\sSemiCat(*, K)$, and considered as a simplicial semicategory, $K^0\cong\sqcup_{*\to K}*$.  Thus, as $S\boxtimes(-)$ commutes with colimits, we only need to show that $S\boxtimes*\cong*$.  This follows from
    \[
    \Hom_\sSemiCat(S\boxtimes*,L)\cong\Hom_\sSemiCat(*,\{S, L\})\cong\Hom_\sSemiCat(*,L),
    \]
    since one always has $\{S, L\}^0\cong L^0$ by construction.
    \item Finally, one has that
    \begin{align*}
    \Hom_\sSemiCat(S\boxtimes(T\boxtimes K),L)&\cong\Hom_\sSemiCat(T\boxtimes K,\{S, L\})\\
    &\cong\Hom_\sSemiCat(K,\{T, \{S, L\}\})\\
    &\cong\Hom_\sSemiCat(K,\{T\times S, L\})\\
    &\cong\Hom_\sSemiCat((S\times T)\boxtimes K, L).
    \end{align*}
\end{itemize}

\end{proof}

\begin{thm}
For all $S\in\textbf{sSet}$ and all $K, L\in\sSemiCat$, one has that
\[
\Hom_\textbf{sSet}(S, \sSemiCat(K, L))\cong\Hom_\sSemiCat(K, \{S, L\})\cong\Hom_\sSemiCat(S\boxtimes K, L).
\]
\end{thm}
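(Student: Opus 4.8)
The plan is to dispose of the second isomorphism immediately and then obtain the first by a density (Yoneda) argument, reducing everything to the case of a representable simplicial set.

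First, the isomorphism $\Hom_\sSemiCat(K, \{S, L\}) \cong \Hom_\sSemiCat(S \boxtimes K, L)$ is nothing other than the defining adjunction $\bigl(S \boxtimes (-)\bigr) \dashv \{S, -\}$ established in the previous theorem, so there is nothing to do there; it is moreover natural in $S$, $K$, and $L$ by construction of $\boxtimes$. For the first isomorphism I would begin with the representable case. By the (simplicial) Yoneda lemma one has $\Hom_{\textbf{sSet}}(\Delta^n, \sSemiCat(K, L)) \cong \sSemiCat(K, L)_n$, and by the very definition of the simplicial enrichment given in the proposition establishing that $\sSemiCat$ is simplicially enriched, $\sSemiCat(K, L)_n = \Hom_\sSemiCat(K, \{\Delta^n, L\})$. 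So the desired identity holds on the nose for $S = \Delta^n$, and one checks it is natural in $[n] \in \Delta^{op}$ since the face/degeneracy action on $\sSemiCat(K, L)$ and the cosimplicial structure on $\{\Delta^\bullet, L\}$ are both induced by the maps $\Delta^m \to \Delta^n$ via the Kleisli data set up for the monads $\{\Delta^n, -\}$.

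Now for an arbitrary $S \in \textbf{sSet}$, I would write $S$ as the canonical colimit of its simplices, $S \cong \varinjlim_{\Delta^n \to S} \Delta^n$, the colimit being taken over the simplex category of $S$, and then compute
\begin{align*}
\Hom_{\textbf{sSet}}(S, \sSemiCat(K, L)) &\cong \Hom_{\textbf{sSet}}\!\left(\varinjlim_{\Delta^n \to S} \Delta^n,\ \sSemiCat(K, L)\right)\\
&\cong \varprojlim_{\Delta^n \to S} \Hom_{\textbf{sSet}}(\Delta^n, \sSemiCat(K, L))\\
&\cong \varprojlim_{\Delta^n \to S} \Hom_\sSemiCat(K, \{\Delta^n, L\})\\
&\cong \Hom_\sSemiCat\!\left(K,\ \varprojlim_{\Delta^n \to S} \{\Delta^n, L\}\right)\\
&\cong \Hom_\sSemiCat\!\left(K,\ \left\{\varinjlim_{\Delta^n \to S} \Delta^n,\ L\right\}\right)\\
&\cong \Hom_\sSemiCat(K, \{S, L\}),
\end{align*}
where the second line uses that $\Hom_{\textbf{sSet}}(-, \sSemiCat(K, L))$ carries colimits to limits, the third is the representable case just established, the fourth uses that $\Hom_\sSemiCat(K, -)$ preserves limits, and the fifth is the natural isomorphism $\{\varinjlim_i S_i, L\} \cong \varprojlim_i \{S_i, L\}$ recorded in the preceding theorem. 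Every step is natural in $S$, $K$, and $L$, so the composite is a natural isomorphism, and chaining it with the adjunction isomorphism from the first paragraph yields the full statement.

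The only point that genuinely requires care — and the step I expect to be the main obstacle — is the naturality in $[n]$ of the representable-case identification $\sSemiCat(K, L)_n \cong \Hom_\sSemiCat(K, \{\Delta^n, L\})$: one must verify that the simplicial operators on the left-hand side (defined through the Kleisli multiplication and unit of the monad $\{\Delta^n, -\}$) correspond, under this bijection, to the maps $\{\Delta^m, L\} \to \{\Delta^n, L\}$ obtained from $\Delta^m \to \Delta^n$ by contravariant functoriality of $\{-, L\}$. Once that compatibility is checked, the density argument above is mechanical, the displayed computation being just repeated use of the continuity of hom-functors together with the two limit-preservation properties of $\{-, -\}$ already in hand.
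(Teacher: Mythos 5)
Your proposal is essentially identical to the paper's own proof: the second isomorphism is dispatched as the defining adjunction, and the first is obtained by writing $S$ as the canonical colimit of its simplices, passing the colimit through the $\Hom$-sets, invoking the representable case via $\sSemiCat(K,L)_n = \Hom_\sSemiCat(K,\{\Delta^n,L\})$, and then pulling the limit back inside $\{-,L\}$. The extra care you flag about naturality in $[n]$ is reasonable diligence but is not something the paper dwells on either.
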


\begin{proof}
Given that the second equivalence in the theorem follows from the mere fact of having an adjunction, we can simply focus on the first equivalence.  Note that one may write any simplicial set $S$ naturally as a colimit $S\cong\varinjlim_{\Delta\downarrow S}\Delta^n$.  This, in turn, yields the following:
\begin{align*}
    \Hom_\textbf{sSet}(S, \sSemiCat(K, L))&\cong\Hom_\textbf{sSet}(\varinjlim_{\Delta\downarrow S}\Delta^n, \sSemiCat(K, L))\\
    &\cong\varprojlim_{\Delta\downarrow S}\Hom_\textbf{sSet}(\Delta^n, \sSemiCat(K, L))\\
    &\cong\varprojlim_{\Delta\downarrow S}\sSemiCat(K, L)_n\\
    &=\varprojlim_{\Delta\downarrow S}\Hom_\sSemiCat(K, \{\Delta^n, L\})\\
    &\cong\Hom_\sSemiCat(K, \varprojlim_{\Delta\downarrow S}\{\Delta^n, L\})\\
    &\cong\Hom_\sSemiCat(K, \{\varinjlim_{\Delta\downarrow S}\Delta^n, L\})\\
    &\cong\Hom_\sSemiCat(K, \{S, L\}).
\end{align*}
\end{proof}

\begin{thm}
The enriched Hom simplicial sets in $\sSemiCat$ behave "as one would expect" with respect to limits and colimits.  Namely:
\begin{itemize}
    \item $\sSemiCat(\varinjlim_iK_i, L)\cong\varprojlim_i\sSemiCat(K_i, L)$ for any colimit.
    \item $\sSemiCat(K, \varprojlim_iL_i)\cong\varprojlim_i\sSemiCat(K, L_i)$ for any limit.
\end{itemize}
\end{thm}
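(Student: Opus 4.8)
The plan is to reduce both isomorphisms, via the Yoneda lemma for simplicial sets, to the corresponding elementary facts about ordinary $\Hom$-sets in $\sSemiCat$, using the adjunction $S\boxtimes(-)\dashv\{S,-\}$ and the identification $\Hom_\textbf{sSet}(S,\sSemiCat(K,L))\cong\Hom_\sSemiCat(S\boxtimes K,L)$ already established. Recall that to exhibit an isomorphism of simplicial sets $A\cong B$ it suffices to produce an isomorphism $\Hom_\textbf{sSet}(S,A)\cong\Hom_\textbf{sSet}(S,B)$ natural in $S\in\textbf{sSet}$, since the Yoneda embedding of $\textbf{sSet}$ is fully faithful and every simplicial set is a colimit of representables.

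For the first bullet, fix $S\in\textbf{sSet}$ and a diagram $K_{(-)}\colon I\to\sSemiCat$. Then
\begin{align*}
\Hom_\textbf{sSet}(S,\sSemiCat(\varinjlim_iK_i,L))
&\cong\Hom_\sSemiCat(S\boxtimes(\varinjlim_iK_i),L)\\
&\cong\Hom_\sSemiCat(\varinjlim_i(S\boxtimes K_i),L)\\
&\cong\varprojlim_i\Hom_\sSemiCat(S\boxtimes K_i,L)\\
&\cong\varprojlim_i\Hom_\textbf{sSet}(S,\sSemiCat(K_i,L))\\
&\cong\Hom_\textbf{sSet}(S,\varprojlim_i\sSemiCat(K_i,L)),
\end{align*}
where the second isomorphism uses that $S\boxtimes(-)$ is a left adjoint and hence commutes with colimits, the third that $\Hom_\sSemiCat(-,L)$ sends colimits to limits, and the last that $\Hom_\textbf{sSet}(S,-)$ preserves limits. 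Each step is natural in $S$, so Yoneda yields $\sSemiCat(\varinjlim_iK_i,L)\cong\varprojlim_i\sSemiCat(K_i,L)$.

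For the second bullet, fix $K\in\sSemiCat$ and a diagram $L_{(-)}\colon I\to\sSemiCat$, and compute
\begin{align*}
\Hom_\textbf{sSet}(S,\sSemiCat(K,\varprojlim_iL_i))
&\cong\Hom_\sSemiCat(S\boxtimes K,\varprojlim_iL_i)\\
&\cong\varprojlim_i\Hom_\sSemiCat(S\boxtimes K,L_i)\\
&\cong\varprojlim_i\Hom_\textbf{sSet}(S,\sSemiCat(K,L_i))\\
&\cong\Hom_\textbf{sSet}(S,\varprojlim_i\sSemiCat(K,L_i)),
\end{align*}
using that $\Hom_\sSemiCat(S\boxtimes K,-)$ and $\Hom_\textbf{sSet}(S,-)$ both preserve limits; Yoneda again closes the argument. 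As an alternative to the Yoneda route one can argue degreewise from $\sSemiCat(K,L)_n=\Hom_\sSemiCat(K,\{\Delta^n,L\})$, invoking that $\{\Delta^n,-\}$ preserves limits, that $\Hom_\sSemiCat$ takes colimits in the first slot and limits in the second to limits, and that limits of simplicial sets are computed degreewise.

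There is no serious obstacle here; the only point requiring a moment's care is that the chain of isomorphisms be genuinely natural in $S$ so that Yoneda applies, which holds since each isomorphism is induced by an adjunction or a universal property. In the degreewise alternative the analogous bookkeeping is the verification that the degreewise bijections commute with the coface and codegeneracy operators, which is immediate from naturality in $[n]\in\Delta$.
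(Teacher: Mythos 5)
Your proposal is correct and follows essentially the same route as the paper: both hinge on the adjunction identity $\Hom_\textbf{sSet}(S,\sSemiCat(K,L))\cong\Hom_\sSemiCat(S\boxtimes K,L)$, on $S\boxtimes(-)$ preserving colimits, and on concluding via Yoneda; the paper merely organizes the argument by first computing degreewise with $S=\Delta^n$ and then passing to a general $S$ via $S\cong\varinjlim_{\Delta\downarrow S}\Delta^n$, which is exactly the degreewise alternative you mention at the end.
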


\begin{proof}
We will only prove the first statement, as the second may be proven analogously.  Note that by Yoneda, $\sSemiCat(K, L)_n\cong\Hom_\textbf{sSet}(\Delta^n,\sSemiCat(K,L))$ and by the above, $\Hom_\textbf{sSet}(\Delta^n,\sSemiCat(K,L))\cong\Hom_\sSemiCat(\Delta^n\boxtimes K,L)$,  Now, note that 
\begin{align*}
    \sSemiCat(\varinjlim_iK_i, L)_n&\cong\Hom_\textbf{sSet}(\Delta^n,\sSemiCat(\varinjlim_iK_i,L))\\
    &\cong\Hom_\sSemiCat(\Delta^n\boxtimes(\varinjlim_iK_i),L)\\
    &\cong\Hom_\sSemiCat(\varinjlim_i(\Delta^n\boxtimes K_i),L)\\
    &\cong\varprojlim_i\Hom_\sSemiCat(\Delta^n\boxtimes K_i,L)\\
    &\cong\varprojlim_i\Hom_\textbf{sSet}(\Delta^n,\sSemiCat(K_i,L))\\
    &\cong\Hom_\textbf{sSet}(\Delta^n,\varprojlim_i\sSemiCat(K_i,L))\\
    &\cong(\varprojlim_i\sSemiCat(K_i, L))_n,
\end{align*}
where equivalence of the unenriched homs in this string results from abstract nonsense.  Now, finally, note that for any $S\in\textbf{sSet}$, one has that $S\cong\varinjlim_{\Delta\downarrow S}\Delta^n$.  Thus, making use of the above equivalences, we have
\begin{align*}
    \Hom_\textbf{sSet}(S,\sSemiCat(\varinjlim_iK_i,L))&\cong\Hom_\textbf{sSet}(\varinjlim_{\Delta\downarrow S}\Delta^n,\sSemiCat(\varinjlim_iK_i,L))\\
    &\cong\varprojlim_{\Delta\downarrow S}\Hom_\textbf{sSet}(\Delta^n,\sSemiCat(\varinjlim_iK_i,L))\\
    &\cong\varprojlim_{\Delta\downarrow S}\Hom_\textbf{sSet}(\Delta^n,\varprojlim_i\sSemiCat(K_i,L))\\
    &\cong\Hom_\textbf{sSet}(\varinjlim_{\Delta\downarrow S}\Delta^n,\varprojlim_i\sSemiCat(K_i,L))\\
    &\cong\Hom_\textbf{sSet}(S,\varprojlim_i\sSemiCat(K_i,L)).
\end{align*}
Thus, we have that there is a natural isomorphism between the simplicial sets $\sSemiCat(\varinjlim_iK_i,L)$ and $\varprojlim_i\sSemiCat(K_i,L)$.
\end{proof}

We now end our discussion with an important definition and a theorem.

\begin{defn}
Two simplicial semifunctors $f, g: K\to L$ are simplicial S-homotopy equivalent if there exists $H\in\Hom_\sSemiCat(\Delta^1\boxtimes K, L)$ such that $H|_0=f$ and $H|_1=g$.  Equivalently, if there exists $h\in\Hom_\textbf{sSet}(\Delta^1, \sSemiCat(K, L))$ such that $h(0)=f$ and $h(1)=g$.
\end{defn}

Finally, we note the following theorem.

\begin{thm}
The functor $\Delta^1\boxtimes(-)$ is a cylinder functor when equipped with the natural transformations $e_0:\{0\}\boxtimes(-)\to\Delta^1\boxtimes(-)$ and $e_1:\{1\}\boxtimes(-)\to\Delta^1\boxtimes(-)$ and the natural transformation $p:\Delta^1\boxtimes(-)\to\{0\}\boxtimes(-)$ induced by projection.
\end{thm}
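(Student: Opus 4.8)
To say that $(\Delta^1\boxtimes(-),e_0,e_1,p)$ is a cylinder functor amounts to three assertions, natural in $K$: that $p\circ e_0=p\circ e_1=\mathrm{id}$; that the induced map $e_0\sqcup e_1\colon K\sqcup K\to\Delta^1\boxtimes K$ is a cofibration, so that together with $p$ it factors the fold map $K\sqcup K\to K$; and that $p\colon\Delta^1\boxtimes K\to K$ is a weak equivalence. The plan is to obtain all three by applying the functor $(-)\boxtimes K$ to the inclusions $\{0\},\{1\}\hookrightarrow\Delta^1$ and the projection $\pi\colon\Delta^1\to\Delta^0$ in $\textbf{sSet}$, using the properties of $\boxtimes$ proved above: $(-)\boxtimes K$ preserves colimits (it is left adjoint to $\sSemiCat(K,-)$), $\Delta^0\boxtimes K\cong K$, and $(S\times T)\boxtimes K\cong S\boxtimes(T\boxtimes K)$. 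The formal part is then immediate: from $\pi\circ\{i\}=\mathrm{id}_{\Delta^0}$ one gets $p\circ e_i=\mathrm{id}_K$; and since $(-)\boxtimes K$ carries the coproduct $\partial\Delta^1=\{0\}\sqcup\{1\}$ to $K\sqcup K$, the map $(\partial\Delta^1\hookrightarrow\Delta^1)\boxtimes K$ is exactly $e_0\sqcup e_1$ while $(\partial\Delta^1\to\Delta^0)\boxtimes K$ is the fold map, so $p\circ(e_0\sqcup e_1)$ is the fold map.

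For the weak equivalence, I would show that $p$ and $e_0$ are mutually inverse simplicial $S$-homotopy equivalences. One has $p\circ e_0=\mathrm{id}_K$ already; for a homotopy $e_0\circ p\simeq\mathrm{id}_{\Delta^1\boxtimes K}$ I would apply $(-)\boxtimes K$ to the standard simplicial contraction $m\colon\Delta^1\times\Delta^1\to\Delta^1$ induced by $(a,b)\mapsto\min(a,b)$ on $[1]\times[1]$: under $\Delta^1\boxtimes(\Delta^1\boxtimes K)\cong(\Delta^1\times\Delta^1)\boxtimes K$, the map $m\boxtimes K$ restricts along $\{0\}$ to $(\mathrm{const}_0)\boxtimes K=e_0\circ p$ and along $\{1\}$ to the identity. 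It then remains to observe that a simplicial $S$-homotopy equivalence is a weak equivalence; since the model structure on $\sSemiCat$ is left induced this is checked after realization, and a simplicial $S$-homotopy restricts on each hom object to an ordinary simplicial homotopy (the "length-one" part $\bP^\Delta K\times\Delta^1$ embeds in $\bP^\Delta(\Delta^1\boxtimes K)$), so $\bP^\Delta p$ becomes a simplicial homotopy equivalence while $p$ is bijective on objects, giving $|p|\in\cW_S$. The same reasoning makes the individual $e_i$ trivial cofibrations once the cofibration condition is in hand.

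The main obstacle is the cofibration condition, namely that $e_0\sqcup e_1=(\partial\Delta^1\hookrightarrow\Delta^1)\boxtimes K$ lies in $\cof(I_{Simp})$. The plan is to show that $\boxtimes$ is compatible with cofibrations in the pushout-product sense: for a monomorphism $S\hookrightarrow T$ of simplicial sets and a cofibration $A\to B$ of simplicial semicategories, the induced map $S\boxtimes B\cup_{S\boxtimes A}T\boxtimes A\to T\boxtimes B$ is a cofibration. Because $\boxtimes$ preserves colimits in each variable, this reduces to checking it when $S\hookrightarrow T$ is $\partial\Delta^m\hookrightarrow\Delta^m$ and $A\to B$ ranges over $I_{Simp}$: on the globular generators one uses $S\boxtimes\text{Glob}^\Delta(T)\cong\text{Glob}^\Delta(S\times T)$ to rewrite the map as $\text{Glob}^\Delta$ applied to the pushout-product of $\partial\Delta^m\hookrightarrow\Delta^m$ and $\partial\Delta^n\hookrightarrow\Delta^n$ in $\textbf{sSet}$ (a monomorphism, hence a cofibration once $\text{Glob}^\Delta$ is applied), while the pushout-products with $\emptyset\hookrightarrow *$ and $*\sqcup *\to *$ collapse to an identity or to a copy of $*\sqcup *\to *$. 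Feeding in $\emptyset\to K$ then exhibits $(\partial\Delta^1\hookrightarrow\Delta^1)\boxtimes K$ as a cofibration — but this last step uses that $\emptyset\to K$ is a cofibration, i.e.\ that $K$ is cofibrant; exactly as for $X\otimes\Delta^1$ in an ordinary simplicial model category, $\Delta^1\boxtimes(-)$ supplies a good cylinder object on cofibrant objects, which is all that is needed for detecting left homotopy. If one insists on the cofibration condition for arbitrary $K$, the fallback is the explicit presentation of $\Delta^1\boxtimes K$ as the simplicial semicategory built from $K\sqcup K$ by freely adjoining, for each ordered pair of objects, a $\bP^\Delta K\times\Delta^1$ of "interval" morphisms subject to the composition already present in $K$, from which one exhibits $e_0\sqcup e_1$ as a transfinite composite of pushouts of maps in $\cof(I_{Simp})$.
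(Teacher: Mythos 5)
Your proposal addresses a considerably stronger claim than the paper actually verifies. The paper's proof consists of a single observation: that $p\circ e_i=\mathrm{id}$ for $i=0,1$, which is all that is required if one reads ``cylinder functor'' in the sense of a functorial augmented (pre)cylinder \`{a} la Grothendieck/Cisinski — the notion the paper explicitly defines in the following section when discussing test categories. Your proof instead interprets ``cylinder functor'' in the model-categorical sense (factoring the fold map as a cofibration followed by a weak equivalence), and so spends most of its effort on two conditions the paper never checks. That extra work is largely sound: the identities $p\circ e_i=\mathrm{id}$ and the identification $\partial\Delta^1\boxtimes K\cong K\sqcup K$ follow cleanly from $\pi\circ\{i\}=\mathrm{id}$, $\Delta^0\boxtimes K\cong K$, and the preservation of colimits in both arguments of $\boxtimes$; and your $\min$-contraction $\Delta^1\times\Delta^1\to\Delta^1$ does produce a simplicial $S$-homotopy $e_0\circ p\simeq\mathrm{id}$, which under the adjunction with $\{\Delta^1,-\}$ becomes an objectwise simplicial homotopy on each $\bP^\Delta_{\alpha\beta}$ and hence a weak equivalence after realization (since the model structure is left-induced). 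You are also right to flag the cofibration condition as the genuine obstacle: the pushout-product argument with $I_{Simp}$ gives a cofibration $K\sqcup K\to\Delta^1\boxtimes K$ only when $K$ is cofibrant, and for general $K$ one would need the more explicit presentation you allude to. Since the paper claims nothing about cofibrations or weak equivalences here, none of this is needed for what it asserts; but if the intended reading really is the model-categorical one, your proposal correctly identifies a gap that the paper's one-line proof does not close.
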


\begin{proof}
This is relatively clear.  One merely notes for $i\in\{0, 1\}$ that $p\circ e_i$ is naturally equivalent to the identity transformation.
\end{proof}

\section{Tree-like Flows and Boxed tree sets}

Now, analyzing the cubical nerve above, it has a natural extension to the study of branching concurrent processes.  In the following section, we will introduce two homotopy-coherent operations between the category of flows, and the category of (pre-)boxed tree sets and a cubical/boxed tree analogue.  We denote the category of finite symmetric trees considered as posets by $\cT$.  Finite trees with only injective morphisms between them will be denoted by $\cT_{inj}$.  Now, let us note that since we have the obvious inclusion of the category of posets into flows, we have an inclusion of $\cT_{inj}$ into $\textbf{Flow}$.  Now, taking the cofibrant replacement of everything, we obtain a “geometric realization” of every finite tree $T$, which we denote $|T|_{\cT}$.  Using these realizations, we can cook up homotopy coherent nerve objects between flows and pre-tree-sets.  

\begin{rem}
These tree sets are \emph{not} dendroidal sets, as this category $\cT$ is not the category $\Omega$ of Moerdijk and Weiss.
\end{rem}

Our realization is given as follows.  For all $K\in\textbf{PrSh}(\cT_{inj})$, we obtain a flow given by $|K|_{\cT}:=\varinjlim_{\cT_{inj}\downarrow K}|T|_{\cT}$.
Similarly, our nerve is given for all flows by $N(X)_T:=\Hom_{\textbf{Flow}}(|T|_{\cT}, X)$.  These functors form an adjunction
\[
\begin{tikzcd}
\textbf{PrSh}(\cT_{inj})
\arrow[r, "|-|_{\cT}"{name=F}, bend left=25] &
\textbf{Flow}
\arrow[l, "N_{\cT}"{name=G}, bend left=25]
\arrow[phantom, from=F, to=G, "\dashv" rotate=-90]
\end{tikzcd}.
\]
This is proved much in the same way as the adjunction before is.

Now, we can define a category which will aid us in our attempts to understand the interactions between concurrent and sequential processes.

\subsection{The Category of Boxed Symmetric Trees}

\begin{defn}
The category of \emph{boxed symmetric trees} $\boxed{\cT}$ is the category whose objects consist of $n$-tuples of elements of $\cT$ for varying $n\in\N$.  Morphisms are generated by the following types of arrow:

i.  For any $n$-tuple $T_1\times\cdots\times T_n\in\boxed{\cT}$, and for any $\{f_i:T_i\to T'_i\}\in T_i\downarrow\cT$ for any $i\in\{0, ..., n\}$, the map
\[
f_i:T_1\times\cdots\times T_i\times\cdots\times T_n\to T_0\times\cdots\times T'_i\times\cdots\times T_n
\]
given by applying the map $\sigma_i$ to the $i$th coordinate and leaving the others unchanged.

ii.  For any $n$-tuple $T_1\times\cdots\times T_n\in\boxed{\cT}$, any $\varepsilon\in\{0, 1\}$, and any $i\in\{1, ..., n+1\}$, one has 
\[
\partial_i^\varepsilon:T_1\times\cdots\times T_i\times\cdots\times T_n\to T_1\times\cdots\times T_{i-1}\times[1]\times T_i\times\cdots\times T_n
\]
via $\partial_i^\varepsilon(a_1, ..., a_n)=(a_1, ..., a_{i-1},\varepsilon, a_i, ..., a_n)$.

iii.  For any $n$-tuple $T_1\times\cdots\times T_n\in\boxed{\cT}$ and any $i\in\{0, ..., n\}$, one has 
\[
s_i:T_1\times\cdots\times T_n\to T_1\times\cdots\times \hat{T_i}\times\cdots\times T_n
\]
given by omitting the $i$th coordinate.

iv. For all $\sigma\in\Sigma_n$ we obtain the obvious map
\[
\sigma:T_1\times\cdots\times T_n\to T_{\sigma(1)}\times\cdots\times T_{\sigma(n)}
\]
permuting the different factors.

We define $\boxed{\cT}_{inj}$ to be the subcategory of $\boxed{\cT}$ defined by taking as generators only the injective morphisms described above (i.e. only the injective tree morphisms in i and morphisms in iii).
\end{defn}

In more informal language, $\boxed{\cT}$ consists of cubes where we allow as the sides not just the standard interval, but in fact all finite trees as our intervals.  Furthermore, we prune our trees and grow branches.

When describing what simplicial nerves describe in the categorical or homotopy coherent categorical setting, we see that they correspond to chaining together composable arrows, in the setting above, they would correspond to running computations, not concurrently, but sequentially.

Cubical nerves, on the other hand, model processes running concurrently, where each independent direction corresponds to a different operation being run at the same time.

What we hope to achieve with $\boxed{\cT}$ is to describe chained concurrent processes, possibly each with their own "flowchart" allowing for branched procedures in each of the factors.  In what follows, we briefly ponder the categories of $\boxed{\cT}$-sets and pre $\boxed{\cT}$-sets, before trying to understand their geometric realization into flows and the homotopy coherent nerve back.

We define the presheaf categories $\textbf{PrSh}(\boxed{\cT})$ and $\textbf{PrSh}(\boxed{\cT}_{inj})$ to be the categories of $\boxed{\cT}$-sets and pre $\boxed{\cT}$-sets respectively.

We may also consider a slightly larger category of shapes, which we can call $\mathfrak T$.  We may define this as the full subcategory of $\mathbf{FinPoSet}$ generated by the objects $\Pi_iT_i$ as in $\boxed{\cT}$.  This, in particular has "connection-like" morphisms built into it, among other things.  It allows for a slightly wider set of computational interpretations than $\boxed{\cT}$, as illustrated by the following idea.  Given any $T_1\times T_2\in\mathfrak T$ consisting of the product of two trees, one has a morphism $T_1\times T_2\to\{0, 1\}$ in $\mathfrak T$ given by $(s,t)\mapsto 0$ if $s$ and $t$ are both the root, and $(s, t)\mapsto 1$ if else, which corresponds roughly to checking if both computations involved have initialized or not.

\subsection{The category of Boxed Trees is a Test Category}

We briefly recall the notion of a test category before demonstrating that the category of boxed trees is a test category.  Test categories were first introduced by Grothendieck in \cite{Gro} in order to come up with reasonable combinatorial models for spaces (a particularly nice introduction can be found in\cite{Cis}).  In particular, a test category can be thought of as a small category with the property that all homotopy types may be modeled by presheaves on it.  This is done in the following manner.

Recall that $\Cat$ is the category of small categories.  Let us define $\cW_\infty$ to be the class of "weak equivalences of categories."  Namely, these are functors which become weak homotopy equivalences under the nerve functor into simplicial sets (in other words, the $\infty$-groupoidifications of these categories are equivalent).  Note that while certainly equivalences of categories are weak equivalences in this manner, it is a much wider class of functors, including any functor which is a left or right adjoint, among others (this is shown by noting that natural transformations are mapped via the nerve construction to simplicial homotopies, which ensures that the unit and counit map to a homotopy equivalence).  An important theorem is that the localization of $\Cat$ by $\cW_\infty$ is equivalent to the standard homotopy category of CW complexes/simplicial sets (in fact, there is a model structure on $\Cat$ due to Thomason \cite{Tho} which realizes this equivalence as a Quillen equivalence).

Consider a small category $\cC$.  Note that there is a natural adjunction
\[
(|-|_\cC\dashv N_\cC):\PrSh(\cC)\rightleftarrows\Cat
\]
defined in one direction by taking for every $C\in\PrSh(\cC)$, $|C|_\cC=\cC\downarrow C$, and in the other direction by taking $N_\cC(\cD)_c=\Hom_\Cat(\cC\downarrow c,\cD)$.

\begin{defn}
We may define \emph{weak test categories} as those small categories $\cC$ for which the counit of the adjunction above $|N_\cC(\cD)|_\cC\to\cD$ is always a weak equivalence.
\end{defn}

Now, we may further analyze the adjunction $(h\dashv N):\textbf{sSet}\rightleftarrows\Cat$.  Note that we have $N(|-|_\cC)\dashv N_\cC\circ h$.

\begin{defn}
If this composite adjunction may be upgraded to a Quillen equivalence with a model structure on $\PrSh(\cC)$ whose cofibrations are monomorphisms, we then say that $\cC$ is a \emph{test category} (This was noted to be equivalent to the more technical definition given below in \cite{ArCiMo}).  In other words, test categories are precisely those which provide a good combinatorial model of spaces upon taking presheaves.
\end{defn}

There are numerous equivalent classifications of (weak) test categories which provide concrete criteria which may be checked (sacrificing brevity and ease of understanding for an actual ability to perform calculations).  The following can be found in \cite{Cis}.

\begin{prop}
A category $\cC$ is a \emph{test category} if and only if the following conditions hold:
\begin{enumerate}
    \item $\cC$ is aspherical (i.e. $N(\cC)$ is a contractible simplicial set.
    \item One of these equivalent conditions hold
    \begin{enumerate}
        \item $\cC$ is a local test category (for every object $c\in\cC$, the overcategory $\cC/c$ is a weak test category, which in turn means that.
        \item The subobject classifier $L_\cC$ in $\PrSh(\cC)$ is locally aspherical.
        \item there exists a locally aspherical separating interval in $\textbf{PrSh}(\cC)$.
    \end{enumerate}
\end{enumerate}
\end{prop}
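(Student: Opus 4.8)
This is the test-category recognition theorem of Grothendieck--Maltsiniotis--Cisinski, so my proposal is to reconstruct Cisinski's argument, organized in three stages: a reduction to a ``non-local'' statement, the proof of that statement for \emph{weak} test categories, and the upgrade to test categories. I would first check that all three clauses of condition~(2) are genuinely pointwise. Using the equivalence $\PrSh(\cC/c)\simeq\PrSh(\cC)/h_c$ (where $h_c=\cC(-,c)$), under which the subobject classifier $L_{\cC/c}$ corresponds to $L_\cC\times h_c$ and separating intervals correspond to separating intervals lying over $h_c$, the three clauses read: ``$\cC/c$ is a weak test category for every $c$'', ``$L_\cC\times h_c$ is aspherical for every $c$'', and ``some aspherical separating interval lies over each $h_c$''. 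Thus condition~(2) follows once one proves, for every small $\cC$, the trichotomy \textbf{(i)} $\cC$ is a weak test category $\iff$ \textbf{(ii)} $L_\cC$ is aspherical $\iff$ \textbf{(iii)} $\PrSh(\cC)$ admits an aspherical separating interval; and, separately, one must identify ``$\cC$ is a test category'' with ``$\cC$ is aspherical and (iii) holds''.

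For the trichotomy I would close the cycle (ii)$\Rightarrow$(iii)$\Rightarrow$(i)$\Rightarrow$(ii). (ii)$\Rightarrow$(iii): in any presheaf topos the subobject classifier $L_\cC=\Omega$, equipped with its two global points $\mathrm{true},\mathrm{false}\colon e\rightrightarrows\Omega$, is a separating interval, since $\mathrm{true}$ and $\mathrm{false}$ classify complementary (hence disjoint) subobjects so that $e\sqcup e\to\Omega$ is a monomorphism; if $L_\cC$ is aspherical it is then an aspherical separating interval. (i)$\Rightarrow$(ii): one identifies $L_\cC\cong N_\cC([1])$ via the bijection between sieves on $c$ and functors $(\cC\downarrow c)^{\mathrm{op}}\to[1]$ together with $[1]^{\mathrm{op}}\cong[1]$; then the weak-test hypothesis applied to $\cD=[1]$ says the counit $|N_\cC([1])|_\cC\to[1]$ lies in $\cW_\infty$, and since $N([1])=\Delta^1$ is contractible this forces the nerve of $\cC\downarrow L_\cC$ to be contractible, i.e.\ $L_\cC$ aspherical. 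The substantial implication is (iii)$\Rightarrow$(i): from an aspherical separating interval $I$ one forms the exact cylinder $X\mapsto I\times X$ (its two end inclusions $X\sqcup X\hookrightarrow I\times X$ are monomorphisms because $I$ is separating) and invokes Cisinski's theorem that there is then a cofibrantly generated model structure $\cM_I$ on $\PrSh(\cC)$ whose cofibrations are the monomorphisms and whose trivial cofibrations are the anodyne extensions for that cylinder; asphericity of $I$ is exactly what makes the weak equivalences of $\cM_I$ coincide with $\cW_\cC:=|-|_\cC^{-1}(\cW_\infty)$, so that $|-|_\cC\colon\PrSh(\cC)\to\Cat$ both preserves and reflects weak equivalences. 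A diagram chase with the counit $|N_\cC(\cD)|_\cC\to\cD$ — using that $\cM_I$-fibrant replacement is computed sectionwise and that $N_\cC$ is right Quillen — then shows this counit lies in $\cW_\infty$ for every $\cD$, i.e.\ $\cC$ is a weak test category.

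For the passage to test categories: if $\cC$ is a test category then the composite adjunction $N(|-|_\cC)\dashv N_\cC\circ h\colon\PrSh(\cC)\rightleftarrows\textbf{sSet}$ is a Quillen equivalence for a model structure with monomorphisms as cofibrations; evaluating the derived counit and unit at the terminal presheaf and at representables forces $N(\cC)$ to be contractible and each slice $\cC/c$ to be weak test, so $\cC$ is aspherical and locally test. Conversely, suppose $\cC$ is aspherical and locally test. By the reduction and the trichotomy there is a locally aspherical separating interval $I$, and local asphericity of $I$ is exactly the condition under which $\cM_I$ has weak equivalences $\cW_\cC$. The left adjoint $N(|-|_\cC)$ carries monomorphisms to monomorphisms and, since $N$ sends $\cW_\infty$ into the Kan--Quillen weak equivalences, sends trivial cofibrations to trivial cofibrations, so the adjunction is Quillen; and since $N(|-|_\cC)$ creates weak equivalences, the criterion recalled in the earlier theorem reduces Quillen-equivalence to the statement that for each Kan complex $K$ the counit $N(\cC\downarrow N_\cC(hK))\to K$ is a weak equivalence. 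This counit is the nerve of the weak-test counit $|N_\cC(hK)|_\cC\to hK$, which lies in $\cW_\infty$ because $\cC$ is a weak test category — and aspherical $+$ locally test does imply weak test (equivalently $L_\cC$ aspherical), which is part of the basic theory and provable by the same interval/cylinder techniques; the asphericity of $\cC$ is moreover what guarantees that the resulting homotopy theory is $\mathrm{Hot}$ itself rather than a slice $\mathrm{Hot}/N(\cC)$. Hence $\cC$ is a test category.

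The one genuinely hard ingredient is the implication (iii)$\Rightarrow$(i) of the trichotomy — the construction of the Cisinski model structure $\cM_I$ attached to an exact (aspherical) cylinder and the homotopical identification of its weak equivalences. This is the substance of \cite{Cis} (the theory of anodyne extensions and of Cisinski model structures on presheaf topoi); once it is granted, the slice reduction, the identification $L_\cC\cong N_\cC([1])$, and the various counit chases are all formal manipulations of adjunctions and of the fundamental localizer $\cW_\infty$.
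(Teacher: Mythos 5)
The paper does not actually prove this proposition: it is stated as background, and the only justification offered is the sentence immediately preceding it, which points to Cisinski's book. Your proposal reconstructs the argument in that reference, and as a sketch it is broadly faithful to Cisinski's organization: the slice reduction $\PrSh(\cC/c)\simeq\PrSh(\cC)/h_c$ turning the local conditions into pointwise global ones, the identification $L_\cC\cong N_\cC([1])$, the observation that $(L_\cC,\mathrm{true},\mathrm{false})$ is a separating interval, and the deployment of the cylinder $I\times(-)$ and the induced Cisinski model structure for the substantial implication (iii)$\Rightarrow$(i) are all stated correctly. The paper's choice to cite buys economy and safety, since it uses the proposition only as a checkable criterion; your reconstruction buys a picture of \emph{why} the criterion works, but, as you yourself flag, it still outsources the hard content (existence and homotopical identification of the model structure $\cM_I$, and the implication that aspherical plus locally test forces weak test) to the very reference the paper cites. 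So there is no clash of approaches: you have unpacked the citation rather than replaced it with an independent argument.
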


We will now provide some of the necessary definitions from the proposition above.

\begin{defn}
Given $\PrSh(\cC)$ as above, an \emph{interval} in $\cC$ is a triple $(I, d_0, d_1)$, where $I\in\PrSh(\cC)$ and $d_i:*_\cC\to I$ for $i=0,1$, where $*_\cC$ is the terminal object of $\PrSh(\cC)$.  This interval is called \emph{aspherical} if $|I|_\cC$ is weakly equivalent to the terminal category (in other words, $I$ is aspherical as a homotopy type), and is called separating if the equalizer of the double arrow $(d_0, d_1)$ is the empty presheaf on $\cC$.
\end{defn}

To prove that $\boxed{\cT}$ is a test category, it suffices to prove that $\boxed{\cT}$ is both aspherical and a local test category.  Let's work this out more concretely.

\begin{lem}
Any small category $\cC$ with a terminal object is acyclic.
\end{lem}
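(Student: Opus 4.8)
The statement to prove is: \emph{Any small category $\cC$ with a terminal object is acyclic}, where "acyclic" here should mean "aspherical," i.e. the nerve $N(\cC)$ is a contractible simplicial set (equivalently, $\cC$ lies in $\cW_\infty$ when mapped to the terminal category).

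\textbf{The plan.} The cleanest approach is to exhibit a natural transformation between the identity functor on $\cC$ and the constant functor at the terminal object $t \in \cC$, and then invoke the fact (already recalled in the excerpt, in the discussion of $\cW_\infty$) that a natural transformation of functors induces a simplicial homotopy between the maps they induce on nerves. Concretely, let $t$ be the terminal object, let $c_t : \cC \to \cC$ denote the constant functor with value $t$, and for each object $x \in \cC$ let $\tau_x : x \to t$ be the unique morphism. Naturality of the family $\{\tau_x\}$ is automatic: for any $f : x \to y$, both $\tau_y \circ f$ and $\id_t \circ \tau_x$ are morphisms $x \to t$, hence equal by uniqueness. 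So $\tau : \id_\cC \Rightarrow c_t$ is a natural transformation.

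\textbf{Carrying it out.} Applying the nerve functor $N : \Cat \to \textbf{sSet}$, the natural transformation $\tau$ yields a simplicial homotopy $N(\cC) \times \Delta^1 \to N(\cC)$ between $N(\id_\cC) = \id_{N(\cC)}$ and $N(c_t)$, which is the constant map at the vertex $t$ factoring through $N(*) = \Delta^0$. Therefore $\id_{N(\cC)}$ is simplicially homotopic to a constant map, so $N(\cC)$ is contractible; equivalently, the unique functor $\cC \to *$ is in $\cW_\infty$, so $\cC$ is aspherical. (One could alternatively phrase this purely at the level of $\Cat$: the natural transformation $\tau$ makes the adjunction-style data witnessing that $\cC \to *$ is a homotopy equivalence of categories in the Thomason sense, but passing through the nerve is the most direct route given what has been set up.)

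\textbf{Main obstacle.} There is essentially no obstacle here; this is a standard fact and the proof is a one-line observation once the "natural transformations become simplicial homotopies" principle — already cited in the excerpt when discussing why left/right adjoints lie in $\cW_\infty$ — is in hand. The only point requiring a moment's care is confirming that the induced homotopy genuinely lands at a \emph{constant} vertex (rather than merely showing the map is null in some weaker sense), but this is immediate since $c_t$ factors through the terminal category $*$ and $N(*) = \Delta^0$. A dual remark applies to initial objects. I would also note explicitly that this lemma will be used to establish the asphericity clause and the local-test-category clause of the test-category criterion for $\boxed{\cT}$, since overcategories $\boxed{\cT}/c$ and the relevant slices inherit terminal objects.
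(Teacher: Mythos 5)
Your proof is correct and is essentially the paper's own argument: exhibit the natural transformation from $\id_\cC$ to the constant functor at the terminal object, then observe that the nerve carries it to a simplicial homotopy contracting $N(\cC)$ to a point. The extra remarks you add (confirming the homotopy lands at a constant vertex, the dual for initial objects) are harmless elaborations of the same route.
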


\begin{proof}
Let $\cC$ be a small category which has a terminal object $e\in\cC$.  There exists a natural transformation from the identity functor to the constant functor at $e$ whose components are the unique maps to $e$.  Furthermore, upon taking the nerve of $\cC$, this natural transformation becomes a homotopy from $N(\cC)$ to a point, yielding the result.
\end{proof}

Now, due to the above, we obtain that $\boxed{\cT}$ is acyclic, since $[0]$ is a terminal object for $\boxed{\cT}$.

\begin{defn}
Given a small category $\cC$, a \emph{functorial precylinder} is a triple $(I, \partial^0, \partial^1)$ such that $I:\cC\to\cC$ is an endofunctor of $\cC$ and $\partial^i$ are natural transformations from the identity functor on $\cC$ to $I$.

An \emph{augmentation} of $(I, \partial^0, \partial^1)$ is a collection of morphisms $\sigma_a: I(a)\to a$ for all objects $a\in\cC$ such that for $i=0, 1$ and $a\in\cC$, one has that $\sigma_a\circ\partial^i_a=\id_a$.  A precylinder which may be equipped with an augmentation is called \emph{augmented}.
\end{defn}

\begin{prop}
Fix a small category $\cC$, a functor $i:\cC\to\Cat$, and a functorial augmented precylinder $(I, \partial^0, \partial^1)$. Suppose the following conditions are satisfied:
\begin{itemize}
    \item For all $a\in \cC$, the functor $i\circ\partial_a^0:i(a)\to iI(a)$ is an open immersion (i.e. there exists an isomorphism between $i(a)$ and a sieve $U_a$ of the category $iI(a)$), and the functor $i\circ\partial_a^1:i(a)\to iI(a)$ factorizes through the complementary cosieve of $U_a$, which we denote by $F_a=iI(a)-U_a$.
    \item For all morphisms $\alpha: a\to a'$ in $\cC$, one has $iI(\alpha)(F_a)\subset F_{a'}$.
    \item For all $a\in \cC$, the category $i(a)$ has a final object.
\end{itemize}
then $i$ is a local test functor and $\cC$ is therefore a local test category.
\end{prop}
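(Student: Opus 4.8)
The plan is to recognize the three hypotheses as the ingredients of Cisinski's criterion for local test categories and to run the argument through the slice characterization of local test functors: by \cite{Cis}, $i$ is a local test functor precisely when, for every object $a$ of $\cC$, the sliced functor $i_{/a}\colon\cC/a\to\Cat$, $(b\xrightarrow{u}a)\mapsto i(b)$, is a weak test functor, and once $i$ is known to be a local test functor the last clause (``$\cC$ is a local test category'') is the standard consequence that a category admitting a local test functor into $\Cat$ is itself a local test category. The augmentation is exactly what makes the precylinder descend to each slice: set $I_a(b\xrightarrow{u}a):=\bigl(I(b)\xrightarrow{\sigma_b}b\xrightarrow{u}a\bigr)$, and carry $\partial^0,\partial^1$ and $\sigma$ along; hypotheses (1)--(3) are conditions on $i$ evaluated at objects, so they hold verbatim for $\cC/a$ with the functor $i_{/a}$. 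Thus it suffices to show that, under (1)--(3), $i$ itself is a weak test functor, which by the criterion of \cite{Cis} reduces to verifying that $i$ is an aspherical functor, and for this I would exhibit a separating, aspherical, cartesian cylinder on $\PrSh(\cC)$ built from the precylinder.

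The cylinder is obtained by left Kan extending $\cC\xrightarrow{I}\cC\hookrightarrow\PrSh(\cC)$ along Yoneda: this gives a colimit-preserving endofunctor $\widehat I$ of $\PrSh(\cC)$ sending a representable $h_c$ to $h_{I(c)}$, still equipped with $\partial^0,\partial^1\colon\id\Rightarrow\widehat I$ and, from the augmentation, a natural retraction $\sigma\colon\widehat I\Rightarrow\id$ with $\sigma\partial^\varepsilon=\id$. Put $\mathbb I:=\widehat I(\ast)$ for $\ast$ the terminal presheaf and $d_\varepsilon:=\partial^\varepsilon_\ast$; this is the candidate interval. Hypothesis (1) says that, fiberwise over $a$, $i\partial^0_a$ identifies $i(a)$ with a sieve $U_a\subseteq iI(a)$ and $i\partial^1_a$ factors through the complementary cosieve $F_a=iI(a)\smallsetminus U_a$; since $U_a\cap F_a=\varnothing$ the two endpoints have disjoint images, and hypothesis (2) says the $F_a$ (hence the $U_a$) are stable under the transition maps $iI(\alpha)$, so they assemble into a subpresheaf of $\mathbb I$ and its complement. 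Consequently the equalizer of $d_0,d_1\colon\ast\rightrightarrows\mathbb I$ is the empty presheaf, i.e. the interval is separating; moreover this ``open endpoint / complementary closed endpoint'' decomposition is exactly what makes $\widehat I$ a \emph{cartesian} cylinder, which is the structure the test-category theorem requires (as opposed to merely producing a minimal Cisinski model structure).

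Asphericity is where hypothesis (3) and the preceding Lemma enter. Every representable $h_c$ is aspherical because $|h_c|_\cC\cong\cC/c$ has terminal object $\id_c$, so its nerve is contractible by the Lemma; since $\widehat I(h_c)=h_{I(c)}$ is again representable it is aspherical too. Hypothesis (3), applied to $c$ and to $I(c)$, makes $N(i(c))$ and $N(iI(c))$ contractible (again by the Lemma), and the retraction $\sigma$ then exhibits $\widehat I(h_c)\to h_c$ as a weak equivalence for every $c$; by cocontinuity of $\widehat I$ and stability of the test weak equivalences under the colimits in play this propagates to all of $\PrSh(\cC)$, so $\widehat I$ is an aspherical cartesian cylinder with separating interval $\mathbb I$. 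Feeding this datum into the relevant theorem of \cite{Cis} shows that the realization associated to $i$ preserves and detects weak equivalences appropriately, i.e. $i$ is an aspherical functor, hence a weak test functor; combined with the slice reduction of the first paragraph this gives that $i$ is a local test functor and therefore $\cC$ is a local test category.

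I expect the main obstacle to be the bookkeeping in the last two steps: pinning down the precise form of ``(locally) aspherical'' demanded by the theorem in \cite{Cis} one invokes, and checking carefully that the sieve/cosieve package supplied by (1)--(2) really does make $\widehat I$ cartesian in that sense, so that one is entitled to conclude asphericity of the realization functor rather than only the existence of a Cisinski model structure with no a priori tie to the homotopy theory of spaces. The verifications that $\mathbb I$ is separating and that representables and the $i(c)$ are aspherical are, by contrast, essentially immediate from the hypotheses and the Lemma.
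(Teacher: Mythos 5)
The paper's ``proof'' of this proposition is a one-line citation to \cite{Cis}, Lemma 8.4.12, so there is no in-paper argument to compare your reconstruction against; what you have written is an attempt to reproduce Cisinski's proof from scratch, and the attempt has a genuine gap exactly where you flag the ``main obstacle.'' The issue is not mere bookkeeping. Left Kan extending the precylinder $I:\cC\to\cC$ along Yoneda gives a cocontinuous endofunctor $\widehat I$ with $\widehat I(h_c)=h_{I(c)}$, but there is no reason this should be a \emph{cartesian} cylinder, i.e.\ of the form $(-)\times\mathbb I$ for $\mathbb I:=\widehat I(\ast)$. Cocontinuous endofunctors of a presheaf topos are not generally products with a fixed object, and nothing in hypotheses (1)--(3) forces this. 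So ``$\widehat I$ is a cartesian cylinder with separating interval $\mathbb I$'' is asserted but not established, and the characterization of (local) test categories via a separating locally aspherical interval requires an actual interval object $(\mathbb I,d_0,d_1)$, not an arbitrary cylinder functor.

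A second, related gap: the sieve $U_a$ and cosieve $F_a$ of hypothesis (1) live inside $iI(a)\in\Cat$, i.e.\ inside the image of $a$ under $i\circ I$, not inside the presheaf $\mathbb I\in\PrSh(\cC)$. Your argument that ``they assemble into a subpresheaf of $\mathbb I$ and its complement,'' and hence that $\mathrm{Eq}(d_0,d_1)=\varnothing$, silently transports a decomposition of a small category along $i$ to a decomposition of a colimit of representables in a different topos. The transport needs an explicit construction and verification, and it is precisely here that Cisinski's actual proof of 8.4.12 differs in kind from your sketch: rather than building a global separating interval on $\PrSh(\cC)$ and invoking the interval criterion, Cisinski works object-by-object with the precylinder, using the terminal object of each $i(a)$ to contract the relevant realization categories and the sieve/cosieve decomposition to control the two endpoints, showing directly that $i$ is locally aspherical (hence a local test functor). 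Your slice reduction and the observation that the terminal object in $i(a)$ yields contractible nerves are correct and do line up with ingredients of Cisinski's argument, but the bridge from those ingredients to a cartesian cylinder on $\PrSh(\cC)$ is the missing step, and I do not believe it can be filled in the form you propose.
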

\begin{proof}
This is proven in \cite{Cis} lemma 8.4.12.
\end{proof}

In particular, this implies the following lemma.
\begin{lem}
The category $\boxed{\cT}$ is a local test category.
\end{lem}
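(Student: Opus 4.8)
The plan is to apply the previous Proposition (Cisinski, Lemma 8.4.12) with $\cC = \boxed{\cT}$, so the task reduces to exhibiting a functor $i : \boxed{\cT} \to \Cat$ together with a functorial augmented precylinder $(I, \partial^0, \partial^1)$ on $\boxed{\cT}$ satisfying the three bulleted conditions. For the functor $i$, I would take the tautological one: each object $T_1 \times \cdots \times T_n$ of $\boxed{\cT}$ is already a finite poset (a product of rooted trees), hence a small category, and all the generating morphisms (tree maps in a coordinate, face insertions $\partial_i^\varepsilon$, coordinate deletions $s_i$, permutations $\sigma$) are order-preserving maps, hence functors; so $i$ is just the inclusion $\boxed{\cT} \hookrightarrow \mathbf{FinPoSet} \hookrightarrow \Cat$. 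The third condition is then immediate: each poset $T_1 \times \cdots \times T_n$ has a top element, namely the tuple of "fully grown" leaves (or the appropriate terminal object of each tree factor), since each finite rooted tree, when ordered away from the root, has... wait — one must be careful here, as a tree need not have a unique maximal element. I would instead note that the relevant poset to use is the one corresponding to $\{0 < 1\}^{?}$-style cubes; more precisely, since $\boxed{\cT}$ is built so that the terminal object is $[0]$ and the "interval" is $[1] = \{0 < 1\}$, I should use as $i$ the functor sending $T_1 \times \cdots \times T_n$ to itself but verify the final-object condition tree by tree: a rooted tree directed away from the root has the root as \emph{initial} object, and one takes the opposite convention if needed so that there is a final object; if a tree genuinely has several leaves one restricts attention to the subcategory $\boxed{\cT}_{inj}$ or uses that the relevant categories $i(a)$ appearing are the cube posets. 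I would pin down this convention carefully as the first step.

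Next I would construct the precylinder. The natural choice is $I(T_1 \times \cdots \times T_n) = [1] \times T_1 \times \cdots \times T_n$, i.e. prepending a standard interval factor; this is functorial in $\boxed{\cT}$ because prepending a factor is compatible with all four families of generating morphisms (tree maps act in the existing coordinates, $\partial_i^\varepsilon$ and $s_i$ shift by one, permutations extend by fixing the new coordinate). The natural transformations $\partial^0, \partial^1 : \id \Rightarrow I$ are the two face inclusions $\partial_1^0$ and $\partial_1^1$ placing $0$ (resp. $1$) in the new first coordinate. The augmentation $\sigma_a : I(a) \to a$ is the projection deleting the new $[1]$-factor, which is one of the generating maps of type (iii); clearly $\sigma_a \circ \partial^i_a = \id_a$, so the precylinder is augmented.

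Then I would check the two immersion/sieve conditions. Applying $i$, the map $i\circ\partial^0_a : i(a) \to iI(a) = \{0<1\} \times i(a)$ is the inclusion at level $0$; its image is $\{0\} \times i(a)$, which is a sieve (down-closed subcategory) in $\{0<1\} \times i(a)$ because $0$ is the bottom of $\{0<1\}$ — nothing lies below $\{0\}\times i(a)$. The complementary cosieve $F_a$ is $\{1\} \times i(a)$, and $i\circ\partial^1_a$ factors through it as the inclusion at level $1$. These are exactly the "open immersion" and "factors through the complementary cosieve" requirements. For the second bullet — stability $iI(\alpha)(F_a) \subset F_{a'}$ for every morphism $\alpha : a \to a'$ — I observe that $iI(\alpha)$ is $\id_{\{0<1\}} \times i(\alpha)$ on the nose (the interval coordinate is never touched by any generator), so it sends $\{1\}\times i(a)$ into $\{1\}\times i(a')$, i.e. $F_a$ into $F_{a'}$. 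With all three conditions of the cited Proposition verified, $i$ is a local test functor and hence $\boxed{\cT}$ is a local test category.

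The main obstacle I anticipate is precisely the final-object condition: a general finite rooted tree, viewed as a poset, does not have a final (terminal) object unless one chooses the orientation so that the root is maximal and the tree has a single leaf — or unless the categories $i(a)$ that actually need checking are only the cube-shaped posets. I would resolve this by being explicit about which orientation convention on trees is in force (consistent with how $[1] = \{0<1\}$ sits inside $\boxed{\cT}$), and, if necessary, by replacing $i$ with the functor $\boxed{\cT} \to \Cat$ that is the honest local-test functor witnessing that cubical-type categories are local test categories and checking that the tree factors contribute cones with final objects under the chosen orientation; alternatively one restricts the verification to $\boxed{\cT}_{inj}$ and transports the conclusion. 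Everything else is bookkeeping on the four generator types, which the compatibility of "prepend an interval factor" with all of them makes routine.
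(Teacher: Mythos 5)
Your proposal is essentially the same as the paper's proof: same tautological embedding $i:\boxed{\cT}\hookrightarrow\Cat$ via posets, same precylinder $I=[1]\times(-)$ with the two endpoint inclusions and the coordinate-deletion augmentation, and the same sieve/cosieve and stability verifications. The one point you agonize over — whether $i(\Pi T_i)$ has a final object — is handled in the paper exactly the way you eventually guess: one simply declares at the outset that all trees in $\cT$ are directed \emph{towards} the root, so the root is the unique terminal object of each tree factor and hence of the product. (Your side suggestions — restricting to $\boxed{\cT}_{inj}$ or to cube-shaped posets — would not help, since the objects and hence the categories $i(a)$ are unchanged; but you do land on the correct resolution, namely fixing the orientation convention.)
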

\begin{proof}
We take for simplicity all elements of $\cT$ to be directed towards the root.  There is a natural embedding of $\boxed{\cT}$ into $\Cat$, which we will label $i:\boxed{\cT}\hookrightarrow\Cat$, in which we take the elements of $\boxed{\cT}$ and map them to their corresponding poset categories.  Note that for every $\Pi T_i\in\boxed{\cT}$, one has that $i(\Pi T_i)$ has a final object by convention.  Thus, the last condition of the above lemma is satisfied, and we need only concern ourselves with the first two.

Now, we will introduce our augmented functorial precylinder, which we will denote $(I, \partial^0, \partial^1)$.  In particular, $I=[1]\times (-)$ and $\partial^i$ is the inclusion of either of the endpoint copies of what we start with via \[(-)\overset{\sim}{\to}[0]\times(-)\rightrightarrows [1]\times(-).\] 

Now, let us go through the remaining two points.  First, note that $\Pi T_i\overset{\partial^0_{\Pi T_i}}{\hookrightarrow}[1]\times\Pi T_i$ is a sieve and that $\Pi T_i\overset{\partial^1_{\Pi T_i}}{\hookrightarrow}[1]\times\Pi T_i$ is its complementary cosieve.  Thus, the first point of the above proposition is automatically satisfied.  The last point follows from noting that if we have $f:\Pi S_j\to \Pi T_i$, the following square commutes:
\[
\begin{tikzcd}
  \Pi S_j \arrow[r, "f"] \arrow[d, hook, "\partial^1_{\Pi S_j}"]
    & \Pi T_i \arrow[d, hook, "\partial^1_{\Pi T_i}"] \\
  \left [1\right ]\times\Pi S_j \arrow[r, "\id_{\left [1\right ]}\times f"]
& \left [1\right ]\times \Pi T_i \end{tikzcd}.
\]
Thus, we have that $\boxed{\cT}$ is a local test category.

\end{proof}

Thus we have

\begin{thm}
$\boxed{\cT}$ is a test category.
\end{thm}
\begin{proof}
Since $\boxed{\cT}$ is a local test category and acyclic, it is a test category.
\end{proof}


\begin{thebibliography}{99}

\bibitem{ArCiMo} D. Ara, D.C. Cisinski, I. Moerdijk, {\em The dendroidal category is a test category}, Mathematical Proceedings of the Cambridge Philosophical Society, Volume 167(1), 107-121, 2019

\bibitem{Bar} C. Barwick,
{\em On Left and Right Model Categories and Left and Right Bousfield Localizations},
Homology Homotopy Appl.
Volume 12, Number 2 (2010), 245-320.

\bibitem{Bek} T. Beke,
{\em Sheafifiable Homotopy Model Categories}, 
Mathematical Proceedings of the Cambridge Philosophical Society
Volume 129, Issue 3 (2000) , 447-475.

\bibitem{Ber} J. Bergner, {\em The Homotopy Theory of $(\infty, 1)$-Categories}, Cambridge University Press, 2018

\bibitem{BrHi} R. Brown, P.J. Higgins,
{\em On the Algebra of Cubes},
Journal of Pure and Applied Algebra 21 (1981) 233-260.

\bibitem{Cis} D.C. Cisinski,
{\em Les Pr\'{e}faisceaux Comme Mod\`{e}les des Types d'Homotopie}, 
Ast\'{e}risque, Volume 308, 2006

\bibitem{ErIl} M.A. Erdal, A.G. İlhan, {\em A model structure via orbit spaces for equivariant homotopy}, Journal of Homotopy and Related Structures volume 14, pages 1131-1141 (2019) 

\bibitem{FGHMR} L. Fajstrup, E. Goubault, E. Haucourt, S. Mimram, M. Raussen,
{\em Directed Algebraic Topology and Concurrency}, Springer, Berlin (2016).

\bibitem{GKR} R. Garner, M. K\c{e}dziorek, E. Riehl,
{\em Lifting Accessible Model Structures},
Journal of Topology 13 (2020) 59–76.

\bibitem{Gau1} P. Gaucher, {\em A Model Category for the Homotopy Theory of Concurrency}, Homology, Homotopy and Applications 5(1) (2003) 549--599.

\bibitem{Gau2} P. Gaucher, {\em Globular Realization and Cubical Underlying Homotopy Type of Time Flow of Process Algebra}, {New York J. Math} 14 (2008) 101–137.

\bibitem{Gau3} P. Gaucher, {\em Towards a homotopy theory of process algebras}, Homology, Homotopy and Applications, Volume 10(1), 2008, pp.353–388

\bibitem{Gau4} P. Gaucher, {\em Flows Revisited: The Model Category Structure and its Left Determinedness}, https://arxiv.org/pdf/1806.08197.pdf

\bibitem{Gra} M. Grandis,
{\em Directed Algebraic Topology: Models of Non-Reversible Worlds}, New Mathematical Monographs. Cambridge: Cambridge University Press, 2009. doi:10.1017/CBO9780511657474.

\bibitem{Gro} A. Grothendieck {\em Pursuing stacks}, Self-published (1983) https://thescrivener.github.io/PursuingStacks/ps-online.pdf

\bibitem{HKRS} K. Hess, M. K\c{e}dziorek, E. Riehl, B. Shipley,
{\em A Necessary and Sufficient Condition for Induced Model Structures},
Journal of Topology 10 (2017) 324-369.

\bibitem{Hir} P.S. Hirschhorn,
{\em Model Categories and Their Localizations}, 
Mathematical Surveys and Monographs, Volume 99, 2003

\bibitem{Ili} A. Ilias,
{\em Model structure on the category of small topological categories}, 
Journal of Homotopy and Related Structures Volume 10 (2015) 63–70. https://doi.org/10.1007/s40062-013-0041-8

\bibitem{Joy} A. Joyal,
{\em Quasi-categories vs simplicial categories
}, 

https://www.math.uchicago.edu/~may/IMA/Incoming/Joyal/QvsDJan9(2007).pdf

\bibitem{KaVo} C. Kapulkin and V. Voevodsky,
{\em Cubical Approach to Straightening}, 
https://www.math.ias.edu/Voevodsky/other/cubical-approach-to-straightening.pdf

\bibitem{Lur} J. Lurie, {\em Higher Topos Theory}, Princeton University Press, 2009

\bibitem{Mal} G. Maltsiniotis,
{\em La Cat\'{e}gorie Cubique avec Connexions est une Cat\'{e}gorie Teste Stricte}, 
Homology Homotopy Appl.
Volume 11, Number 2 (2009), 309-326.

\bibitem{Qui} D.G. Quillen, {\em Homotopical algebra}, Lecture Notes in Mathematics, No. 43, (1967). https://doi.org/10.1007/BFb0097438

\bibitem{Tho} R.W. Thomason, {\em Cat as a closed model category}, Cahiers Topologie Géom. Différentielle Volume 21, number 3 (1980), pp. 305–324.


\end{thebibliography}
\end{document}